\crefname{equation}{}{}
\newtheorem{theorem}{Theorem}
\newtheorem{proposition}[theorem]{Proposition}
\newtheorem{lemma}[theorem]{Lemma}
\newtheorem{claim}[theorem]{Claim}
\newtheorem{corollary}[theorem]{Corollary}
\newtheorem{conjecture}[theorem]{Conjecture}
\theoremstyle{definition}
\newtheorem{definition}[theorem]{Definition}
\newtheorem{problem}[theorem]{Problem}
\newtheorem{question}[theorem]{Question}
\newtheorem{example}[theorem]{Example}
\theoremstyle{remark}
\newtheorem*{remark}{Remark}
\numberwithin{theorem}{section}
\newcommand{\abs}[1]{\left\lvert#1\right\rvert}
\newcommand{\floor}[1]{\left\lfloor #1 \right\rfloor}
\newcommand{\ceil}[1]{\left\lceil #1 \right\rceil}
\newcommand{\paren}[1]{\left( #1 \right)}
\newcommand{\set}[1]{\left\{ #1 \right\}}
\renewcommand{\epsilon}{\varepsilon}
\DeclareMathOperator{\sgn}{sgn}
\DeclareMathOperator{\prt}{part}
\newcommand{\PP}{\mathbb{P}}
\newcommand{\CC}{\mathbb{C}}
\newcommand{\R}{\mathbb{R}}
\newcommand{\cC}{\mathcal C}
\newcommand{\cE}{\mathcal E}
\newcommand{\cF}{\mathcal F}
\newcommand{\cP}{\mathcal P}
\newcommand{\cU}{\mathcal U}
\newcommand{\cV}{\mathcal V}
\newcommand{\cdim}[2][x]{\dim_{#1}^{\mathbb{C}}(#2)}
\title[Multilevel polynomial partitioning and semialgebraic hypergraphs]{Multilevel polynomial partitioning and semialgebraic hypergraphs: regularity, Tur\'an, and Zarankiewicz results}
\author[Tidor]{Jonathan Tidor}
\author[Yu]{Hung-Hsun Hans Yu}
\address{Department of Mathematics, Stanford University, Stanford, CA 94305, USA}
\email{jtidor@stanford.edu}
\address{Department of Mathematics, Princeton University, Princeton, NJ 08544, USA}
\email{hansonyu@princeton.edu}
\thanks{Tidor was supported by a Stanford Science Fellowship}
\begin{document}

\begin{abstract}
We prove three main results about semialgebraic hypergraphs. First, we prove an optimal and oblivious regularity lemma. Fox, Pach, and Suk proved that the class of $k$-uniform semialgebraic hypergraphs satisfies a very strong regularity lemma where the vertex set can be partitioned into $\mathrm{poly}(1/\varepsilon)$ parts so that all but an $\varepsilon$-fraction of $k$-tuples of parts are homogeneous (either complete or empty). Our result improves the number of parts in the partition to $O_{d,k}((D/\varepsilon)^{d})$ where $d$ is the dimension of the ambient space and $D$ is a measure of the complexity of the hypergraph; additionally, the partition is oblivious to the edge set of the hypergraph. We give examples that show that the dependence on both $\varepsilon$ and $D$ is optimal.

From this regularity lemma we deduce the best-known Tur\'an-type result for semialgebraic hypergraphs.
Third, we prove a Zarankiewicz-type result for semialgebraic hypergraphs. Previously Fox, Pach, Sheffer, Suk, and Zahl showed that a $K_{u,u}$-free semialgebraic graph on $N$ vertices has at most $O_{d,D,u}(N^{2d/(d+1)+o(1)})$ edges and Do extended this result to $K_{u,\ldots,u}^{(k)}$-free semialgebraic hypergraphs. We improve upon both of these results by removing the $o(1)$ in the exponent and making the dependence on $D$ and $u$ explicit and polynomial.

All three of these results follow from a novel ``multilevel polynomial partitioning scheme'' that efficiently partitions a point set $P\subset\mathbb{R}^d$ via low-complexity semialgebraic pieces. We prove this result using the polynomial method over varieties as developed by Walsh which extends the real polynomial partitioning technique of Guth and Katz.

We give additional applications to the unit distance problem, the Erd\H{o}s--Hajnal problem for semialgebraic graphs, and property testing of semialgebraic hypergraphs.
\end{abstract}

\maketitle

\section{Introduction}

Semialgebraic graphs and hypergraphs appear ubiquitously in discrete geometry. These are graphs and hypergraphs whose vertex set is a set of points in $\R^d$ and whose edges are defined by a Boolean combination of polynomial inequalities and equalities. For example, a foundational result in incidence geometry is the Szemer\'edi--Trotter theorem \cite{ST83} on the maximum number of incidences between a set of points and lines in $\R^2$. Representing a line by the pair $(m,b)$ of slope and $y$-intercept, we know that the incidence of a point $(x,y)$ with this line is given by the polynomial equality $y=mx+b$. Thus we see that the point-line incidence problem can be represented by a semialgebraic graph. As another example, one of the most notorious open problems in discrete geometry is the Erd\H{o}s unit distance problem \cite{Erd46}. This problem asks, given $n$ points in $\R^2$, what is the maximum number of pairs of points at distance 1? Since the unit distance condition can be written as $(x-x')^2+(y-y')^2=1$, we see that the unit distance problem can also be represented by a semialgebraic graph.

Two parameters are important to measure the complexity of a semialgebraic representation. The first is the dimension $d$ in which the vertex set lies. The second is the total degree, defined to be the sum of the degrees of the polynomials used in the definition of the edge set. In both of the previous examples, the dimension and total degree are both 2. As every hypergraph has a semialgebraic representation either with very large dimension or very large total degree, it will be important to quantify our results in terms of these parameters. Previous works in this area typically studied semialgebraic graphs and hypergraphs of bounded dimension and total degree. In contrast, our methods are able to handle those of slowly growing total degree.

In this paper, we prove three main results about semialgebraic hypergraphs. The first concerns the regularity lemma.

\subsection{Regularity lemmas}
A central tool in extremal graph theory is Szemer\'edi's graph regularity lemma \cite{Sze78}. This result gives a structural decomposition of an arbitrary graph, partitioning the vertex set into a bounded number of parts so that between almost all pairs of parts the graph behaves quasirandomly. Crucially, the number of parts in the partition does not depend on the number of vertices of the graph, but only on the error parameter $\epsilon$. Szemer\'edi's regularity lemma is extremely versatile, finding numerous applications in extremal graph theory as well as in additive combinatorics and other fields (see, e.g., the surveys \cite{CF13,KS96}). One drawback of the regularity lemma is that the number of parts, though independent of the number of vertices, is quite large. Szemer\'edi's original proof gives a tower of twos of height $\epsilon^{-O(1)}$ and a construction of Gowers shows that this tower-type behavior is necessary \cite{Gow97}.

When studying hypergraphs the situation becomes even more complicated. Different hypergraph generalizations of Szemer\'edi's regularity lemma have been proved by Gowers, R\"odl et al., and Tao \cite{Gow07,RNSSK05,Tao06}. However, in order for these results to be strong enough to prove useful applications (in particular the hypergraph removal lemma) the statement of the result and the type of structure produced is necessarily quite complicated. The quantitative dependence also is worse; for $k$-uniform hypergraphs, the number of parts is $k-1$ steps up the Ackermann hierarchy and this enormous growth rate is necessary \cite{MS19}.

In the last twenty years, a fruitful line of research has been to develop simpler and more efficient regularity lemmas for restricted families of graphs and hypergraphs. Since in discrete geometry applications one typically applies the regularity lemma to a geometrically defined graph, these restricted regularity lemmas have numerous applications in this field. One of these extensions is to graphs of bounded VC-dimension. Lov\'asz and Szegedy established an ``ultra strong regularity lemma'' for graphs of bounded VC-dimension \cite{LS10}. In this result, the number of parts is only $\epsilon^{-O(d^2)}$ where $d$ is the VC-dimension and almost all pairs of parts have edge density either less than $\epsilon$ or greater than $1-\epsilon$. This result was quantitatively improved by Alon, Fischer, and Newman \cite{AFN07} and extended to hypergraphs by Fox, Pach, and Suk \cite{FPS19}.

For semialgebraic hypergraphs of bounded dimension and total degree, even stronger results are known. In 2005, Alon, Pach, Pinchasi, Radoi\v{c}i\'c, and Sharir first defined semialgebraic graphs to study the crossing patterns of semialgebraic sets \cite{APPRS05}. An ``almost perfect regularity lemma'' follows from their results. Here the vertices of a semialgebraic graph are partitioned so that almost all pairs of parts are homogeneous, i.e., either complete or empty. This result was extended to hypergraphs by Fox, Gromov, Lafforgue, Naor, and Pach \cite{FGLNP12}. Both of these results were quantitatively strengthened by Fox, Pach, and Suk \cite{FPS16} who proved a polynomial ``almost perfect regularity lemma'' where the number of parts in the partition is only $\epsilon^{-C}$ for some constant $C$ depending on the dimension, uniformity, and total degree of the hypergraph.

We now formally state the definition of a semialgebraic hypergraph. We will restrict our attention to $k$-uniform hypergraphs which are also $k$-partite. This is not a serious restriction, since any hypergraph $H$ can be represented by a $k$-partite hypergraph $H'$ whose vertex set consists of $k$ copies of $V(H)$.

\begin{definition}
A \emph{semialgebraic hypergraph} in $\R^d$ is a $k$-uniform $k$-partite hypergraph $H=(P_1\sqcup\cdots\sqcup P_k,E)$ where $P_1,\ldots,P_k$ are each finite subsets of $\R^d$ such that the following holds. There are polynomials $g_1,\ldots,g_t\in\R[x_{11},\ldots,x_{kd}]$ and a function $\Phi\colon\{-1,0,1\}^t\to\{0,1\}$ so that $((x_{11},\ldots,x_{1d}),\ldots,(x_{k1},\ldots,x_{kd}))\in P_1\times\cdots\times P_k$ is an edge if and only if
\[\Phi(\sgn(g_1(x_{11},\ldots,x_{kd})),\ldots,\sgn(g_t(x_{11},\ldots,x_{kd})))=1.\]
We call $\sum_{r=1}^t \deg g_r$ the \emph{total degree} of $H$.\footnote{Previous works have used slightly different definitions of the complexity of a hypergraph. For example, \cite{FPS16} gives a semialgebraic hypergraph a complexity $(t,D)$ where $t$ is the number of polynomials used and $D$ is an upper bound on the degree of each polynomial as a function of $x_{i1},\ldots,x_{id}$ for each $i\in[k]$. Alternatively, \cite{FPSSZ17} defines the description complexity of a semialgebraic hypergraph to be $\max\{t,\deg g_1,\ldots,\deg g_t\}$. It is clear that all of these notions are equivalent up to a small loss in the parameters.}
\end{definition}

\begin{definition}
For a $k$-partite $k$-uniform hypergraph $H=(P_1\sqcup\cdots\sqcup P_k,E)$ we say that partitions $\Pi_1,\ldots,\Pi_k$ of $P_1,\ldots,P_k$ respectively form a \emph{homogeneous partition} with error $\epsilon$ if 
\[\sum\frac{|\pi_1|\cdots|\pi_k|}{|P_1|\cdots|P_k|}\leq \epsilon\]
where the sum is over $k$-tuples of parts $(\pi_1,\ldots,\pi_k)\in\Pi_1\times\cdots\times\Pi_k$ that are not homogeneous, i.e., such that $E[\pi_1\times\cdots\times \pi_k]$ is neither complete nor empty.
\end{definition}

Our first result is an optimal and oblivious regularity lemma for semialgebraic hypergraphs. We reduce the number of parts in the regularity partition to $O_{d,k}((D/\epsilon)^d)$. In other words, the power of $\epsilon$ is reduced from the inexplicit $C=C(d,k,D)$ to only the dimension $d$. In addition, the dependence on the total degree $D$ is made explicit and polynomial. We give a family of constructions where the number of parts is necessarily $\Omega_{d,k}((D/\epsilon)^d)$, proving the optimality of this result. Our regularity lemma has a second surprising property which we call obliviousness. Given the vertex set of $H$, the error parameter $\epsilon$, and the total degree $D$, our main result provides a regularity partition which is homogeneous with error $\epsilon$ for every semialgebraic hypergraph on this vertex set.

\begin{theorem}
\label{thm:main}
Fix $d,k,D\geq1$ and $\epsilon>0$. Let $P_1,\ldots,P_k\subset\R^d$ be finite sets. There exist partitions $\Pi_1,\ldots,\Pi_k$ of $P_1,\ldots,P_k$ respectively of size
\[|\Pi_1|,\ldots,|\Pi_k|=O_{d,k}\paren{\paren{D/\epsilon}^d}.\]
These have the property that for any semialgebraic hypergraph $H$ on vertex sets $P_1,\ldots,P_k$ with total degree at most $D$, the partitions $\Pi_1,\ldots,\Pi_k$ give a homogeneous partition of $H$ with error $\epsilon$.  
\end{theorem}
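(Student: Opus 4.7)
The plan is to apply the paper's multilevel polynomial partitioning to each $P_i$ independently. A suitable choice of parameters produces partitions $\Pi_i$ of size $m_i=O_{d,k}((D/\epsilon)^d)$ in which each cell $\pi_i\in\Pi_i$ satisfies $|\pi_i|\leq|P_i|/m_i$ and is contained in some connected component of $\R^d\setminus Z(F_i)$ for a fixed partitioning polynomial $F_i$ of degree $E=O_{d,k}(D/\epsilon)$. The multilevel aspect is essential: a single Guth--Katz polynomial partition of $\R^d$ leaves points on $Z(F_i)$ unaccounted for, and Walsh's polynomial method over varieties handles these by recursively partitioning $Z(F_i)$, then subvarieties thereof, and so on down through lower dimensions, maintaining both the total cell count and the per-cell point balance.

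With these partitions fixed, the product $\Pi_1\times\cdots\times\Pi_k$ yields a natural decomposition of $\R^{kd}$: each product cell $\pi_1\times\cdots\times\pi_k$ lies in some connected component of $\R^{kd}\setminus Z(F_1\cdots F_k)$, the total number of product cells is $M=\prod_i m_i=O_{d,k}((D/\epsilon)^{kd})$, and each carries weight $\prod_i|\pi_i|/|P_i|\leq 1/M$.

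Now fix any semialgebraic hypergraph $H$ on $P_1,\ldots,P_k$ with defining polynomials $g_1,\ldots,g_t$ of degrees $D_1,\ldots,D_t$ satisfying $\sum_r D_r\leq D$. A product cell is homogeneous whenever every $\sgn g_r$ is constant on it, which is ensured when $Z(g_r)$ does not cross the cell. The polynomial $F_1\cdots F_k$ has degree $kE=O_{d,k}(D/\epsilon)$ on $\R^{kd}$, and a Milnor--Thom-style bound shows that the number of cells of $\R^{kd}\setminus Z(F_1\cdots F_k)$ crossed by the degree-$D_r$ hypersurface $Z(g_r)$ is at most $O_{d,k}(D_r E^{kd-1})$. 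Summing over $r$ and using $\sum_r D_r\leq D$, the total number of product cells bad for some $g_r$ is at most $O_{d,k}(DE^{kd-1})$, a fraction $O_{d,k}(D/E)$ of $M$. Choosing the constant in $E=\Theta_{d,k}(D/\epsilon)$ large enough makes this fraction at most $\epsilon$; combined with the per-cell weight bound $1/M$, the total weight of non-homogeneous $k$-tuples is at most $\epsilon$. Crucially, the estimate uses only $\sum_r\deg g_r\leq D$ rather than the specific polynomials, yielding obliviousness of the partition.

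The main obstacle is establishing the multilevel polynomial partitioning with all of these properties simultaneously: $O_{d,k}((D/\epsilon)^d)$ cells, each of bounded semialgebraic complexity, with tight per-cell point-count balance and every point of $P_i$ placed in some cell. The Guth--Katz theorem supplies the right open-cell count but cannot place boundary points, and Walsh's polynomial method over varieties is the key tool for recursively handling points on the partitioning variety and its successive subvarieties. Ensuring that this recursion terminates with the advertised cell count, that all partitioning polynomials have degrees bounded by $E$, and that the resulting partition admits the Milnor--Thom-type bound on crossings in the product space is the chief technical content of the paper.
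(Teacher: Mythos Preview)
Your high-level strategy matches the paper's: apply the multilevel polynomial partitioning of \cref{thm:main-partition} to each $P_i$ with parameter $A=\Theta_{d,k}(D/\epsilon)$, then bound the non-homogeneous weight by a Milnor--Thom-type count of product cells in $\R^{kd}$ crossed by each $Z(g_r)$. The obliviousness observation is also correct.

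The gap is in your description of what the multilevel partition actually outputs. You assert that every cell $\pi_i$ lies in a connected component of $\R^d\setminus Z(F_i)$ for a single polynomial $F_i$, and that $|\pi_i|\le |P_i|/m_i$ uniformly. Neither holds. In \cref{thm:main-partition} the cells at level $i<d$ are connected components of $Z(Q^{(V)})(\R)\setminus Z(f_V)$, i.e.\ pieces of lower-dimensional varieties, not open cells of $\R^d$; a product of such cells has measure zero in $\R^{kd}$ and is simply not seen by a Milnor--Thom bound on components of $\R^{kd}\setminus Z(F_1\cdots F_k)$. Moreover the size bound \cref{eq:size-of-parts} is level-dependent: a cell at level $i$ can contain $\sim |P_V|/A^i$ points, which may far exceed $|P|/A^d$ (think of many points on a line in $\R^2$). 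The paper compensates precisely for this imbalance: \cref{thm:number-of-bad-tuples} uses the Barone--Basu refinement \cref{thm:barone-basu} to bound, for each choice of levels $(V_1,\ldots,V_k)$, the number of bad product cells by $\lesssim_{d,k}\deg g_r\cdot A^{i_1+\cdots+i_k-1}\prod_\ell\prod_j\deg Q_j^{(V_\ell)}$, which exactly cancels the denominators in \cref{eq:size-of-parts}; the resulting contribution $\lesssim_{d,k}(\deg g_r/A)\prod_\ell|P_{V_\ell}|/|P_\ell|$ is then summed over all $V_\ell$ via \cref{eq:total-size-of-point-sets}. Your single-polynomial, uniform-weight shortcut collapses this bookkeeping and does not go through as written.
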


While this paper was in preparation, Rubin \cite{Rub24b} proved a version of this result where the number of parts is bounded by $O_{d,k,D}((1/\epsilon)^{d+o(1)})$. For a comparison of his techniques and ours, see \cref{ssec:comparison}.

A partition is called equitable if every pair of parts differs in size by at most 1. Via standard techniques one can deduce an equitable version of the main theorem with a small loss in the bound. In dimension $d=1$, our techniques give an equitable partition with no loss in the bound.

\begin{corollary}
\label{thm:main-equitable}
Fix $d,k,D\geq1$ and $\epsilon>0$. Let $P_1,\ldots,P_k\subset\R^d$ be finite sets. There exist equitable partitions $\Pi_1,\ldots,\Pi_k$ of $P_1,\ldots,P_k$ respectively of size
\[|\Pi_1|,\ldots,|\Pi_k|=O_{d,k}\paren{D^d\epsilon^{-(d+1)}}.\]
These have the property that for any semialgebraic hypergraph $H$ on vertex sets $P_1,\ldots,P_k$ with total degree at most $D$, the partitions $\Pi_1,\ldots,\Pi_k$ give a homogeneous partition of $H$ with error $\epsilon$.  
Furthermore, if $d=1$, we can instead guarantee that
\[|\Pi_1|,\ldots,|\Pi_k|=O_{k}\paren{D/\epsilon}.\]
\end{corollary}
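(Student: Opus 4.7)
The plan is to deduce \cref{thm:main-equitable} from \cref{thm:main} via a standard refinement argument for general $d$, handling the $d=1$ case separately by revisiting the proof of \cref{thm:main}.

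For the general $d$ case, first I would apply \cref{thm:main} with error parameter $\epsilon/2$ to obtain partitions $\Pi_1', \ldots, \Pi_k'$ of size $M = O_{d,k}((D/\epsilon)^d)$. Setting the target equitable part size to $s_i := \lfloor \epsilon|P_i|/(2kM)\rfloor$, so that each $\Pi_i$ has $N = O_{d,k}(D^d\epsilon^{-(d+1)})$ parts, I would slice each $\pi\in\Pi_i'$ into chunks of exactly $s_i$ points and collect the $<s_i$-sized remainders into additional ``junk'' parts. The total size of junk is at most $Ms_i \leq \epsilon|P_i|/(2k)$, which occupies at most $O(\epsilon N/k)$ additional parts of size $s_i$; after a minor off-by-one rebalancing, each $\Pi_i$ is equitable with $N$ parts. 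To verify the error, I would use that homogeneity is monotone under refinement: any $k$-tuple in $\Pi_1\times\cdots\times\Pi_k$ whose coordinates all come from non-junk ancestors in the $\Pi_i'$ is non-homogeneous only if its ancestor tuple was, contributing weight at most $\epsilon/2$. Any other non-homogeneous tuple involves a junk coordinate, and the union bound gives total weight at most $k\cdot\epsilon/(2k)=\epsilon/2$. Summing yields total error at most $\epsilon$.

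For $d=1$ I would bypass \cref{thm:main} and directly partition each sorted $P_i$ into $N=\Theta_k(D/\epsilon)$ consecutive equitable intervals. The key estimate is that for each defining polynomial $g_r$ of degree $D_r$, the number of non-homogeneous $k$-tuples for $g_r$ is at most $O_k(D_r N^{k-1})$ out of $N^k$: viewing $g_r$ as univariate in any single coordinate (with the others fixed) yields at most $D_r$ sign changes on $P_i$, and packaging these one-dimensional sign-change bounds into a $k$-dimensional count via the polynomial method underlying \cref{thm:main} gives the claimed bound. Summing over $r$ with $\sum_r D_r = D$ produces bad fraction $O_k(D/N)\leq\epsilon$.

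I expect the main obstacle to be the $d=1$ case: a naive coordinate-by-coordinate union bound accumulates extra factors of $|P_i|$, so one must combine the one-dimensional sign-change estimates more carefully. The cleanest route is probably to observe that the proof of \cref{thm:main} specialized to dimension one naturally outputs an equipartition, because polynomial partitioning in $\R^1$ reduces to choosing quantiles of the point set; this should produce an equitable partition as a direct byproduct rather than requiring a separate combinatorial argument.
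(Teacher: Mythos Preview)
Your proposal is essentially correct and follows the same two-part strategy as the paper. A couple of remarks on the execution:

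For general $d$, the paper's bookkeeping is slightly cleaner than your slice-and-collect-junk scheme, and it sidesteps the ``off-by-one rebalancing'' you flag. After applying \cref{thm:main} with error $\epsilon/2$ to get $\Pi_1,\ldots,\Pi_k$, the paper linearly orders each $P_i$ so that every part of $\Pi_i$ is an interval, then takes a \emph{single} equitable partition $\Pi_i'$ of this ordering into $K_i=\lceil 4k\epsilon^{-1}|\Pi_i|\rceil$ consecutive intervals. A part of $\Pi_i'$ is ``bad'' exactly when it straddles a boundary of $\Pi_i$; there are at most $|\Pi_i|$ such parts, each of size at most $2|P_i|/K_i$. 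This produces a genuinely equitable partition immediately, and the error analysis is the same as yours (non-homogeneous tuples either sit inside a non-homogeneous tuple of the coarse partition, or touch a bad part). Your version works too, but the rebalancing needed to make chunk sizes differ by at most $1$ is more than an off-by-one fix and is avoided entirely by the reorder-then-cut approach.

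For $d=1$, your final paragraph is exactly right and is what the paper does: choose quantiles $y_1<\cdots<y_{A-1}$ of $P_i$ with $A=\Theta_k(D/\epsilon)$, set $f(x)=\prod_j(x-y_j)$, and observe that this \emph{is} valid data for the multilevel partitioning theorem in dimension $1$ (with $\mathcal V_0=\emptyset$, $\mathcal V_1$ a singleton, $Q^{(V)}$ empty). The proof of \cref{thm:main} then applies verbatim via \cref{thm:number-of-bad-tuples} to give the homogeneity bound, and the resulting partition is equitable by construction. Your earlier sketch counting sign changes coordinate-by-coordinate is indeed the wrong packaging; the correct count comes from the joint bound of \cref{thm:number-of-bad-tuples} applied in $\R^k$, not from a union over coordinates.
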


One straightforward consequence of \cref{thm:main} is a Tur\'an-type result. This improves upon a recent result of Rubin \cite{Rub24a} who proves the same dependence on $\epsilon$ without an explicit dependence on $D$. As far as we are aware, this is the first Tur\'an result for semialgebraic graphs or hypergraphs which has an explicit dependence on the total degree.

\begin{corollary}
\label{thm:turan}
Let $H=(P_1\sqcup \cdots\sqcup P_k,E)$ be a semialgebraic hypergraph in $\R^d$ with total degree $D$ and at least $\epsilon|P_1|\cdots|P_k|$ edges. Then there exist sets $S_1\subseteq P_1,\ldots, S_k\subseteq P_k$ such that $S_1\times\cdots\times S_k\subseteq E$ and $|S_1|\cdots|S_k|=\Omega_{d,k}(D^{-d(k-1)}\epsilon^{d(k-1)+1}|P_1|\cdots|P_k|)$. In fact, for $1\leq i\leq k-1$,
\[|S_i|=\Omega_{d,k}\paren{(\epsilon/D)^d|P_i|}\]
and
\[|S_k|=\Omega\paren{\epsilon|P_k|}.\]
\end{corollary}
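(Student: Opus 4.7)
The plan is to derive this directly from the main regularity theorem \cref{thm:main} by a double-counting argument.

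Apply \cref{thm:main} with error parameter $\epsilon/2$ to obtain partitions $\Pi_1,\ldots,\Pi_k$ of $P_1,\ldots,P_k$, each of size $|\Pi_i|\leq C_{d,k}(D/\epsilon)^d$, that form a homogeneous partition of $H$ with error $\epsilon/2$. Call a tuple $(\pi_1,\ldots,\pi_k)\in\Pi_1\times\cdots\times\Pi_k$ \emph{good} if $\pi_1\times\cdots\times\pi_k\subseteq E$. Since $H$ has at least $\epsilon\prod_j|P_j|$ edges and non-homogeneous blocks account for at most $(\epsilon/2)\prod_j|P_j|$ of them, the good tuples satisfy
\[\sum_{(\pi_1,\ldots,\pi_k)\text{ good}}|\pi_1|\cdots|\pi_k|\geq\frac{\epsilon}{2}\prod_j|P_j|.\]

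For each $(\pi_1,\ldots,\pi_{k-1})\in\Pi_1\times\cdots\times\Pi_{k-1}$, set
\[A(\pi_1,\ldots,\pi_{k-1})=\bigcup\set{\pi_k\in\Pi_k:(\pi_1,\ldots,\pi_k)\text{ is good}}\subseteq P_k,\]
so that $\pi_1\times\cdots\times\pi_{k-1}\times A(\pi_1,\ldots,\pi_{k-1})\subseteq E$. Summing over $\pi_k$ above rewrites the previous inequality as
\[\sum_{(\pi_1,\ldots,\pi_{k-1})}|\pi_1|\cdots|\pi_{k-1}|\cdot|A(\pi_1,\ldots,\pi_{k-1})|\geq\frac{\epsilon}{2}\prod_j|P_j|.\]

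It remains to locate a single tuple $(\pi_1,\ldots,\pi_{k-1})$ satisfying (i) $|\pi_i|\geq c_1(\epsilon/D)^d|P_i|$ for each $i<k$ and (ii) $|A(\pi_1,\ldots,\pi_{k-1})|\geq c_2\epsilon|P_k|$, where $c_1,c_2$ are positive constants depending only on $d,k$; then setting $S_i=\pi_i$ for $i<k$ and $S_k=A(\pi_1,\ldots,\pi_{k-1})$ finishes the proof. To find such a tuple, observe that for each $i<k$ the mass of tuples with an undersized $\pi_i$ is at most
\[\sum_{\pi_i:\,|\pi_i|<c_1(\epsilon/D)^d|P_i|}|\pi_i|\prod_{j\neq i}|P_j|\leq c_1(\epsilon/D)^d|\Pi_i|\prod_j|P_j|\leq c_1C_{d,k}\prod_j|P_j|,\]
while the mass with $|A|<c_2\epsilon|P_k|$ is at most $c_2\epsilon\prod_j|P_j|$. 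Calibrating $c_1,c_2$ so that $(k-1)c_1C_{d,k}+c_2\epsilon<\epsilon/2$ leaves positive residual mass among the good tuples, so at least one survives with both properties.

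The heavy lifting is subsumed in \cref{thm:main}; the remaining obstacle is simply the threshold calibration. The delicate point is that to maintain the clean individual bound $|S_i|\geq c_1(\epsilon/D)^d|P_i|$ with $c_1$ depending only on $d,k$, one must exploit that the small-part mass bound $c_1C_{d,k}\prod_j|P_j|$ is already $O_{d,k}(1)$ free of $\epsilon$, which is exactly the gain afforded by the optimal size $|\Pi_i|=O_{d,k}((D/\epsilon)^d)$ from \cref{thm:main}; in particular this is where the explicit and polynomial dependence on $D$ in the main theorem is essential for producing the explicit $D^{-d(k-1)}$ factor in the conclusion.
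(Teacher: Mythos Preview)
Your threshold calibration does not work. The inequality you need is $(k-1)c_1C_{d,k}+c_2\epsilon<\epsilon/2$, but $(k-1)c_1C_{d,k}$ is a fixed positive constant once $c_1$ depends only on $d,k$, while $\epsilon/2$ can be arbitrarily small. So for all sufficiently small $\epsilon$ the inequality fails, and you cannot choose $c_1$ independent of $\epsilon$. If you instead take $c_1$ proportional to $\epsilon$, the argument goes through but only yields $|S_i|\gtrsim_{d,k}\epsilon^{d+1}D^{-d}|P_i|$, losing a factor of $\epsilon$ in each coordinate and hence $\epsilon^{k-1}$ in the product. The issue is exactly that the mass carried by small parts of $\Pi_i$ is only $O_{d,k}(1)\cdot\prod_j|P_j|$, not $O(\epsilon)\cdot\prod_j|P_j|$; the optimal bound $|\Pi_i|=O_{d,k}((D/\epsilon)^d)$ makes these two quantities balance rather than making the small-part mass negligible.

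The paper closes this gap with an induction (for $d\geq 2$): if at least an $\epsilon/(3k)$ fraction of edges touch the union $B_i$ of small parts in some $\Pi_i$, then $|B_i|\leq |P_i|/(9k^2)$ forces the density on the restricted hypergraph to be at least $3k\epsilon$, and one recurses. The crucial computation is that the recursive bound $|S_i|\gtrsim (\eta/D)^d|B_i|$ with $\eta\geq(\epsilon/3k)\cdot|P_i|/|B_i|$ gives
\[
|S_i|\gtrsim\paren{\frac{\epsilon}{D}}^d\frac{(|P_i|/|B_i|)^{d-1}}{(3k)^d}\,|P_i|\geq\paren{\frac{\epsilon}{D}}^d|P_i|,
\]
using $|P_i|/|B_i|\geq 9k^2$ and $d\geq 2$. (For $d=1$ this fails, and the paper instead exploits that in dimension one the regularity partition can be taken equitable at no cost.) Your direct pigeonhole cannot recover this; some form of iteration or density increment is needed.
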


These results have a number of applications. For example, the regularity lemma implies the existence of very fast property testing algorithms inside the class of semialgebraic hypergraphs. The Tur\'an-type result gives better bounds on the Erd\H{o}s--Hajnal problem for semialgebraic graphs. Earlier, Alon, Pach, Pinchasi, Radoi\v{c}i\'c, and Sharir proved that semialgebraic graphs have the Erd\H{o}s--Hajnal property: every $N$-vertex semialgebraic graph contains a clique or independent set of size $N^{c}$ for some $c=c(d,D)>0$ only depending on the dimension and total degree. We improve this bound to $N^{\Omega(\log \log D/d\log D)}$ and show that this exponent is tight up to a constant factor. We will discuss these applications further in \cref{sec:applications}. In addition to these, a main application is a Zarankiewicz-type result for semialgebraic hypergraphs.

\subsection{The Zarankiewicz problem}
Perhaps the most well-known problem in extremal graph theory is the Zarankiewicz problem. This asks for the maximum number of edges in an $N$-vertex $K_{s,t}$-free graph \cite{Zar51}. The best-known upper bound is $O_t(N^{2-1/s})$ for $s\leq t$, proved by K\H{o}v\'ari, S\'os, and Tur\'an in 1954 \cite{KST54}. This bound is conjectured to be tight; despite considerable effort, matching constructions are only known when $s=2,3$, proved by Brown and Erd\H{o}s--R\'enyi--S\'os \cite{Bro66,ERS66} and when $t$ is much larger than $s$. For small values of $s$, a matching construction that works for $t\geq (s-1)!+1$ was given by Alon, Koll\'ar, R\'onyai, and Szab\'o \cite{ARS99,KRS96}. For larger values of $s$, a more recent construction of Bukh \cite{Buk24} gives a matching lower bound for $t\geq 9^{(1+o(1))s}$. All of these constructions are described as algebraic graphs defined over finite fields.

However, it is known that any such construction must have either large total degree or large dimension. This is because a $K_{u,u}$-free semialgebraic graph (over $\R$) or algebraic graph (over any field) has at most $O_{u,d,D}(N^{2-1/d})$ edges. (For $d\ll u$ this substantially improves upon the K\H{o}v\'ari--S\'os--Tur\'an bound.) This result was proved by Fox, Pach, Sheffer, Suk, and Zahl \cite{FPSSZ17} for semialgebraic graphs and recently by Milojevi\'c, Sudakov, and Tomon \cite{MST24} for algebraic graphs over arbitrary fields. Over arbitrary fields this is tight \cite{MST24}, but has been improved over the reals: Fox et al. prove that an $N$-vertex $K_{u,u}$-free semialgebraic graph in $\R^d$ with total degree $D$ has at most $O_{u,d,D}(N^{2d/(d+1)+o(1)})$ edges \cite{FPSSZ17}. Do extended this result to $K^{(k)}_{u,\ldots,u}$-free semialgebraic hypergraphs \cite{Do18}.

These semialgebraic Zarankiewicz results are quite important in discrete geometry. For example, the point-line incidence problem in $\R^2$ is naturally represented as a semialgebraic graph in $\R^2$. Since this point-line incidence graph is $K_{2,2}$-free, the above result of Fox et al. implies that $N$ points and $N$ lines in $\R^2$ form at most $O(N^{4/3})$ incidences, recovering the Szemer\'edi--Trotter theorem. In higher dimensions the point-hyperplane incidence problem becomes trivial since all $N$ points can be placed on the same line which is contained in all the hyperplanes. To make this problem non-trivial it is standard to study it under the assumption that the incidence graph is $K_{u,u}$-free. For this problem, Apfelbaum and Sharir proved that there are at most $O_{u,d}(N^{2d/(d+1)})$ incidences \cite{AS07}, improving previous works of Chazelle and Brass--Knauer \cite{Cha93,BK03}. Though there is no matching lower bound in dimensions $d\geq 3$, this remains the best-known upper bound on the point-hyperplane incidence problem.

Note in particular, the semialgebraic Zarankiewicz result of Fox et al. matches the Apfelbaum--Sharir point-hyperplane incidence bound up to a $o(1)$-loss. Our techniques imply a Zarankiewicz result for semialgebraic hypergraphs. This improves upon the bounds of Fox et al. and of Do by removing the $o(1)$-loss. In addition, we make the dependence on $u$ and the total degree of the hypergraph explicit and polynomial. As far as we are aware, this is the first result on general semialgebraic graphs which removes the $o(1)$-loss and the first which makes the dependence on the total degree explicit.

\begin{theorem}
\label{thm:zarankiewicz}
Let $H=(P_1\sqcup \cdots\sqcup P_k,E)$ be a semialgebraic hypergraph in $\R^d$ with total degree $D$. Suppose $|P_1|=\cdots=|P_k|=N$. If $H$ is $K^{(k)}_{u,\ldots,u}$-free, then the number of edges of $H$ is at most
\[|E|=O_{d,k}\paren{\sum_{s=1}^k u^{a_s}D^{b_s}N^{k-\frac{s}{(s-1)d+1}}}.\]
where
$a_s={\frac{6s+\left((s-2)(s+3)+3(k-s)(s+2)\right)(s-1)d}{6((s-1)d+1)}}$ and $b_s={\frac{(s-1)(s+2)d}{2((s-1)d+1)}}$.
For $k=2$, this implies that
\[|E|=O_d\paren{u^{\frac2{d+1}}D^{\frac{2d}{d+1}}N^{\frac{2d}{d+1}}}.\]
\end{theorem}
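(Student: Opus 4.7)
The plan is to combine the Tur\'an-type bound \cref{thm:turan} with an iterative refinement, equivalently to apply the regularity lemma \cref{thm:main} at varying scales. Writing $|E| = \epsilon N^k$, the $s = 1$ term $O(uN^{k-1})$ arises directly from one application of \cref{thm:turan} in the case $|S_k| < u$: this forces $\epsilon = O(u/N)$, matching $a_1 = 1$ and $b_1 = 0$. A single application of \cref{thm:turan} with $|S_i| < u$ for some $i < k$ gives only the weaker bound $O(D u^{1/d} N^{k - 1/d})$, so one must iterate to obtain the stronger $N$-exponents $k - s/((s-1)d+1)$ for $s \geq 2$.

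The iteration proceeds by applying \cref{thm:main} with a carefully chosen error parameter $\epsilon$ at each level. This partitions the vertex sets into roughly equal pieces and decomposes the edges into those lying in complete tuples and those lying in non-homogeneous tuples. By $K^{(k)}_{u,\ldots,u}$-freeness, each complete tuple must have some part of size less than $u$, so the total contribution from complete tuples is $O_k(uN^{k-1}(D/\epsilon)^d)$. The non-homogeneous tuples are then partitioned further in the next round of recursion. For $k=2$ this scheme with one level yields $O(uN(D/\epsilon)^d + \epsilon N^2)$, whose optimum $u^{1/(d+1)} D^{d/(d+1)} N^{2 - 1/(d+1)}$ differs from the target $u^{2/(d+1)} D^{2d/(d+1)} N^{2d/(d+1)}$ by an $N^{1/(d+1)}$ factor; recursing further into the non-homogeneous pairs and balancing the parameters is what closes this gap.

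For general $k$, the $k$-level iteration is designed to produce the $k$-term sum $\sum_{s=1}^k u^{a_s} D^{b_s} N^{k-s/((s-1)d+1)}$, with one term per level. The $N$-exponent $k-s/((s-1)d+1)$ at level $s$ reflects that the effective parameter dimension for an $s$-tuple of points in $\R^d$ modulo a scalar normalization is $(s-1)d+1$, which is precisely the dimension in which a Szemer\'edi--Trotter-type bound produces this exponent. The coefficients $a_s$ and $b_s$ encode the compound $u$- and $D$-factors accumulated through the iteration: at each level, one incurs a $(D/\epsilon)^d$ partition factor on the $D$-side and a KST-style $u^{1/\text{(something)}}$ factor on the $u$-side, and these interlock to give the explicit formulas.

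The main obstacle is twofold. First, the sequence of error parameters in the recursion must be chosen so that the exact $N$-exponent is achieved rather than with an $N^{o(1)}$ loss (a persistent shortfall of the prior approaches of \cite{FPSSZ17,Do18}); the multilevel polynomial partitioning of this paper, built on Walsh's variety-based polynomial method, avoids the logarithmic slack present in iterated Guth--Katz partitioning and so makes this achievable. Second, the explicit polynomial dependence on $u$ and $D$ must be tracked through the $k$ levels of iteration, which becomes delicate for $k \geq 3$ since many levels must be balanced simultaneously to produce the formulas for $a_s$ and $b_s$ stated in the theorem.
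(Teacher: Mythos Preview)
Your approach has a genuine gap. For $k=2$, your single-level bound
\[
|E|\lesssim u N (D/\epsilon)^d + \epsilon N^2
\]
optimizes to $u^{1/(d+1)}D^{d/(d+1)}N^{(2d+1)/(d+1)}$, which has the wrong $N$-exponent. You assert that recursing on the non-homogeneous pairs closes the gap, but it does not: the subgraph sitting inside the non-homogeneous pairs is still a semialgebraic graph in $\R^d$ of total degree $D$ on the same vertex sets, so re-applying the same scheme yields no improvement. If instead you recurse inside each non-homogeneous cell pair, the number of such pairs scales like $\epsilon(D/\epsilon)^{2d}$, and for $d\ge 2$ the accumulated factor blows up as $\epsilon\to 0$. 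The regularity lemma only controls the \emph{total mass} of non-homogeneous tuples; it gives no handle that makes the residual problem easier, so no choice of a decreasing sequence of error parameters will reach $N^{2d/(d+1)}$.

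The paper's argument is structurally different in two ways. First, it applies multilevel polynomial partitioning (\cref{thm:main-partition}) directly to $P_1,\ldots,P_{k-1}$ with a single parameter $A$, rather than iterating \cref{thm:main}. The point is that for each $q\in P_k$ one can invoke \cref{thm:number-of-bad-tuples} with $g(x)=g_r(x,q)$ to bound the number of cells $C$ for which the neighborhood $N_q$ properly crosses $C$; summing over $q$ gives $\sum_C |P_{C,\mathsf{some}}|\lesssim_{d,k} D|P_k|A^{-1}\cdot(\text{cell count})$, a pointwise saving of a full factor of $A$ that the black-box regularity lemma cannot see. This is combined with a weak Zarankiewicz bound (\cref{thm:zarankiewicz-weak}), itself derived from the $\ell=1$ case of \cref{thm:generalized-turan}, applied inside each cell to $P_C\times Q_{C,\mathsf{some}}$; H\"older's inequality then balances the two ingredients. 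Second, for general $k$ the paper does not run a $k$-level iteration at fixed uniformity; it inducts on $k$. The weak bound at uniformity $k$ comes from applying \cref{thm:generalized-turan} to pass to the common neighborhood of $u$ points in $P_k$, which is a $(k-1)$-uniform semialgebraic hypergraph of total degree $uD$, and then invoking the inductive hypothesis. The $s$-indexed terms in the final bound arise from the $s$-term sum in the $(k-1)$-uniform hypothesis propagating through this step, not from $k$ rounds of partitioning. Your sketch is missing both the per-point crossing count and the induction on uniformity, and without the former the target $N$-exponent is unreachable by your scheme.
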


Later in the paper we will state and prove this result when $P_1,\ldots,P_k$ have different sizes. We also remark that this and all previous results can easily be generalized to the case when $P_1,\ldots, P_k$ live in $\R^{d_1},\ldots,\R^{d_k}$ for different dimensions $d_1,\ldots,d_k$.

In addition to removing the $o(1)$-loss in all previous applications of Fox et al.'s Zarankiewicz result, we expect this theorem to have a number of new applications due to the explicit dependence on $u$ and $D$. In \cref{sec:applications}, we give a couple immediate applications to the unit distance problem and to counting equilateral triangles.

\subsection{Proof technique and comparison to previous work}
\label{ssec:comparison}
All of our results are proved via a new polynomial method tool that we call ``multilevel polynomial partitioning''. The polynomial method was first introduced by Dvir in his proof of the finite field Kakeya conjecture \cite{Dvi09}. Guth and Katz further developed the method by introducing a tool called real polynomial partitioning as part of their breakthrough resolution of the Erd\H{o}s distinct distance problem \cite{GK15}. Real polynomial partitioning and the polynomial method more generally have revolutionized discrete geometry since their introduction fifteen years ago (see, e.g., \cite{FPSSZ17,KMS12,Kol15,Rud18,ST12,TYZ22,Wal23,Zah15,Zah19}).

Real polynomial partitioning takes a point set $P\subset\R^d$ and produces a low degree polynomial $f$ such that each connected component of $\R^d\setminus Z(f)$ contains few points of $P$. This allows one to efficiently divide a discrete geometry problem into subproblems. The main drawback of Guth and Katz's original method is that there is no control over the points of $P\cap Z(f)$. This makes it increasingly difficult to apply real polynomial partitioning as the ambient dimension of the problem increases. To overcome this issue a technique called ``constant-degree polynomial partitioning'' was developed. This technique iteratively applies Guth--Katz polynomial partitioning many times, each time with a polynomial of bounded degree. Though this method has found widespread success (see, e.g., \cite{Do18,FPSSZ17,Gut15a,ST12}), the iterative nature of the proof technique means that all applications have a $o(1)$-loss in the exponent. For example, this is the technique that Fox--Pach--Sheffer--Suk--Zahl and Do use to prove their semialgebraic Zarankiewicz results. It is a major open problem to remove the $o(1)$-loss in all applications of constant-degree polynomial partitioning.

One further extension of the polynomial method was recently developed by Walsh \cite{Wal20}. Using more input from algebraic geometry, he showed how to more efficiently apply real polynomial partitioning to a set of points $P$ lying in some variety $V$. Though his techniques are very strong they are also quite technical to apply. To date his results have only found two applications: point-hypersurface incidences \cite{Wal20} and point-curve incidences in arbitrary dimensions \cite{Wal23}.

In this paper we use Walsh's methods to prove a general-purpose tool that we call multilevel polynomial partitioning. We give two applications, using it to prove our regularity lemma and our Zarankiewicz result, though we hope that this result can be used to remove the $o(1)$-loss in many other results proved using constant-degree polynomial partitioning. Given a point set $P\subset\R^d$, multilevel polynomial partitioning constructs a partition of $P$ by applying Walsh's polynomial partitioning result at $d$ levels. In more detail, we first find a low degree polynomial $f$ that cuts $\R^d\setminus Z(f)$ into a number of open cells each with few points from $P$. For each open cell $C$ (i.e., connected component of $\R^d\setminus Z(f)$) we add $P\cap C$ as a part of our partition. It then remains to partition $P\cap Z(f)$. To handle these points we use polynomial partitioning again inside the variety $Z(f)$. We iterate this a total of $d$ times, reducing the dimension of the variety by one each iteration. To produce an efficient quantitative dependence, it is important that we use Walsh's polynomial partitioning over varieties as well as keep careful control over the number and complexity of the varieties produced. Our multilevel polynomial partitioning scheme is qualitatively similar to previous work of Matou\v{s}ek and Pat\'{a}kov\'{a} \cite{MP15} though our work has significantly stronger quantitative dependence.

To prove the semialgebraic regularity lemma, \cref{thm:main}, we apply multilevel polynomial partitioning to the vertex sets $P_1,\ldots,P_k$ with a parameter chosen in terms of $d,k,\epsilon,D$. It then remains to prove that this gives a homogeneous partition of any semialgebraic hypergraph $H$ on vertex set $P_1\sqcup\cdots\sqcup P_k$ with total degree at most $D$. To do this we use an argument that is inspired by recent work of Bukh and Vasileuski on the same-type lemma \cite{BV24}. We view our $k$ partitions of $\R^d$ as giving a single partition of $\R^{dk}$. Then a $k$-tuple of cells is non-homogeneous only if it is properly crossed by one of the defining polynomials $g_1,\ldots,g_t$ of the hypergraph $H$. We bound the number of cells crossed by the zero set of a polynomial using a Milnor--Thom-type result proved by Barone and Basu \cite{BB16}.

While this paper was in preparation, Rubin \cite{Rub24b} independently proved a version of \cref{thm:main} where the number of parts is bounded by $O_{d,k,D}((1/\epsilon)^{d+o(1)})$. His regularity lemma is also oblivious to the edge set of the hypergraph and has the additional advantage that the partition can be computed by an algorithm that runs in near linear time. However, his result has the disadvantage that the dependence of the number of parts on $D$ is not stated explicitly and is likely quite large, and the bound loses a factor of $\epsilon^{-o(1)}$. Rubin constructs his partitions via constant-degree polynomial partitioning which leads to the $o(1)$-loss. He also proves the correctness of this partition using an analysis in $\R^{dk}$ that also is inspired by the recent work of Bukh and Vasileuski \cite{BV24}.

Our Tur\'an result, \cref{thm:turan}, is a straightforward corollary of the semialgebraic regularity lemma. To prove our Zarankiewicz result, \cref{thm:zarankiewicz}, we combine the Tur\'an result with a second application of multilevel polynomial partitioning.

\subsection*{Structure of paper}

We state the multilevel polynomial partitioning scheme in \cref{sec:proof-of-regularity} and use it to prove the regularity lemma, \cref{thm:main}. In \cref{sec:proof-of-mpps} we prove the multilevel polynomial partitioning scheme. We then deduce our Tur\'an result, \cref{thm:turan}, in \cref{sec:turan} and our Zarankiewicz result, \cref{thm:zarankiewicz}, in \cref{sec:zarankiewicz}. We conclude with applications, lower bound examples for the regularity lemma, and open problems in \cref{sec:applications,sec:lower-bound,sec:open-problems} respectively.

\subsection*{Notation}

We use standard graph theory notation. We interchangeably write $A=O(B)$ and $A\lesssim B$ to mean that there exists some absolute constant $C$ so that $A\leq CB$. We write $[k]$ for $\{1,2,\ldots,k\}$.

\subsection*{Acknowledgments}
We thank Jacob Fox for numerous enlightening discussions as well as Huy Tuan Pham for helpful conversations about \cref{sec:lower-bound}. We thank Mervyn Tong for pointing out an error in an earlier version of the proof of \cref{thm:zarankiewicz-full}.

\section{Multilevel polynomial partitioning}
\label{sec:proof-of-regularity}

In this paper we use the word variety to refer to a (not-necessarily irreducible) algebraic variety in $\CC^d$. For a variety $V$ we use the typical notions of degree and dimension. We write $V(\R)$ for the real points of $V$, i.e., $V(\R)=V\cap\R^d$. Given polynomials $Q_1,\ldots,Q_r$, we write $Z(Q_1,\ldots,Q_r)$ for the variety $\{x\in\CC^d:Q_1(x)=\cdots=Q_r(x)=0\}$.

Our main result is the following multilevel polynomial partitioning scheme. The input is a finite set of points $P\subset\R^d$ and a parameter $A\geq 1$. We produce a partition of $P$ into $O_d(A^d)$ parts where each part of the partition is contained in a semialgebraic set $C$. Here $C$ is a connected component of $Z(Q_1,\ldots, Q_r)(\R)\setminus Z(f)$ where $Q_1,\ldots, Q_r,f$ are polynomials of degree $O_d(A)$.

\begin{theorem}[Multilevel polynomial partitioning]
\label{thm:main-partition}
Let $P\subset\R^d$ be a finite set of points. Given a parameter $A\geq 1$ we produce sets
\[\cV=\cV_0\sqcup\cV_1\sqcup\cdots\sqcup\cV_d.\]
For $0\leq i\leq d$, each element $V\in\cV_i$ is a tuple
\[V=(Q^{(V)}, f_V, P_V)\in\cV_i\]
where
\begin{itemize}
    \item $Q^{(V)}=(Q^{(V)}_1,\ldots,Q^{(V)}_{d-i})$ is a $(d-i)$-tuple of complex polynomials each of degree $O_d(A)$;
    \item $f_V$ is a real polynomial of degree $O_d(A)$ that vanishes identically on every irreducible component of $Z(Q^{(V)})$ of dimension at least $i+1$; and
    \item $P_V\subseteq Z(Q^{(V)})\cap P$ is a finite set of points.
\end{itemize}
These satisfy
\begin{equation}
\label{eq:total-deg}
\sum_{V\in\cV_i}\prod_{j=1}^{d-i}\deg Q^{(V)}_j\lesssim_d A^{d-i}
\end{equation}
for each $0\leq i\leq d$. Additionally
\begin{equation}
\label{eq:total-size-of-point-sets}
\sum_{V\in\cV}|P_V|\lesssim_d |P|.
\end{equation}
For each $V\in\cV$, define $\cC_V$ to be the set of connected components of $Z(Q^{(V)})(\R)\setminus Z(f_V)$. Now define the set of cells as
\[\cC=\bigsqcup_{V\in\cV}\cC_V.\]
For each $C\in\cC_V$, define $P_C=C\cap P_V$. These have the property that they form a partition
\begin{equation}
\label{eq:partition}
P=\bigsqcup_{C\in\cC}P_C
\end{equation}
and also satisfy
\begin{equation}
\label{eq:size-of-parts}
|P_C|\lesssim_d \frac{|P_V|}{A^{i}\prod_{j=1}^{d-i}\deg Q_j^{(V)}}
\end{equation}
for each $0\leq i\leq d$, each $V\in\cV_i$, and each $C\in\cC_V$.
\end{theorem}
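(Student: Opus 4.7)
The plan is to construct the partition by iterating Walsh's polynomial partitioning over varieties, descending one dimension at a time from $i=d$ down to $i=0$. Initialize $\cV_d=\{V_0\}$ with the empty tuple $Q^{(V_0)}=()$, so that $Z(Q^{(V_0)})=\CC^d$, and $P_{V_0}=P$. Proceeding inductively: given the collection $\cV_i$, process each $V\in\cV_i$ by applying Walsh's polynomial partitioning inside the variety $Z(Q^{(V)})$ (which has dimension at most $i$) to the point set $P_V$ with a degree parameter of order $A$. This produces a real polynomial $f_V$ of degree $O_d(A)$ that does not vanish on any $i$-dimensional irreducible component of $Z(Q^{(V)})$ and that cuts $Z(Q^{(V)})(\R)$ into $O_d(A^i\prod_j\deg Q^{(V)}_j)$ connected components, each containing $\lesssim_d |P_V|/(A^i\prod_j\deg Q^{(V)}_j)$ points of $P_V$. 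This immediately provides the cells $\cC_V$ and verifies \eqref{eq:size-of-parts}.

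To descend from level $i$ to level $i-1$, observe that every point of $P_V$ that does not land in an open cell must lie on $Z(Q^{(V)})\cap Z(f_V)$. I would introduce a single new element $V'\in\cV_{i-1}$ with $Q^{(V')}=(Q^{(V)}_1,\ldots,Q^{(V)}_{d-i},f_V)$ and $P_{V'}=P_V\cap Z(f_V)$. Because $f_V$ does not vanish on any $i$-dimensional component of $Z(Q^{(V)})$, the new variety $Z(Q^{(V')})$ has dimension at most $i-1$, so the vanishing requirement on $f_{V'}$ at the next level is vacuous for components of dimension at least $i$. The recursion terminates at $i=0$ where $Z(Q^{(V)})$ is a finite set and Walsh guarantees $Z(f_V)$ contains no $0$-dimensional components, so every remaining point is placed in a cell. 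This construction assigns each point of $P$ to exactly one cell across all levels, giving the partition \eqref{eq:partition}, and \eqref{eq:total-size-of-point-sets} follows from the trivial estimate $\sum_{V\in\cV_i}|P_V|\leq|P|$ summed over the $d+1$ levels.

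For the degree bound \eqref{eq:total-deg}, the recursion gives $\prod_{j=1}^{d-i+1}\deg Q^{(V')}_j=\deg(f_V)\cdot\prod_{j=1}^{d-i}\deg Q^{(V)}_j$; summing over $V$ and invoking the inductive hypothesis $\sum_{V\in\cV_i}\prod_j\deg Q^{(V)}_j\lesssim_d A^{d-i}$ together with $\deg(f_V)\lesssim_d A$ yields $\sum_{V'\in\cV_{i-1}}\prod_j\deg Q^{(V')}_j\lesssim_d A^{d-i+1}$, as required. The main obstacle will be invoking Walsh's partitioning theorem with precisely the right quantitative dependence at each step: we need $f_V$ to have degree $O_d(A)$ independent of the complexity of the ambient variety $Z(Q^{(V)})$, while simultaneously producing only $O_d(A^i\prod_j\deg Q^{(V)}_j)$ connected components with the correct per-cell point count. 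This is exactly the refinement supplied by Walsh's polynomial method over varieties, whose use in place of iterated Guth--Katz partitioning is what allows us to achieve \eqref{eq:total-deg} without the $o(1)$-loss endemic to constant-degree polynomial partitioning.
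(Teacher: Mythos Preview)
Your outline has a genuine gap at the heart of the iteration. Walsh's partitioning theorem (the paper's Theorem~3.3) is stated for an \emph{irreducible} variety $V$, and the per-cell bound it returns is $\lesssim_d |P_V|/(A^i\prod_{j\le d-i}\delta_j(V))$, expressed in terms of the partial degrees $\delta_j(V)$, with $i$ determined by where $A$ falls among those partial degrees. After the very first step your variety $Z(Q^{(V')})=Z(f_{V_0})$ is typically reducible, so Walsh does not apply to it as written. If you instead apply Walsh to each irreducible component $W$ separately, two things go wrong. First, you only learn that each $f_W$ does not vanish on $W$; you have no control over the other components, so your claim that $Z(Q^{(V')})$ has dimension exactly $i-1$ (hence that the vanishing condition on $f_{V'}$ is vacuous) is unjustified. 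Second, and more seriously, the per-cell bound you obtain is $|P_V\cap W|/(A^{i}\prod_j\delta_j(W))$, and $\prod_j\delta_j(W)$ can be arbitrarily small compared to $\prod_j\deg Q_j^{(V)}$ (think of $f_{V_0}$ factoring into many low-degree pieces), so \eqref{eq:size-of-parts} fails. Taking the product of Walsh polynomials over all components does not help either: the number of components can be $\sim A^{d-i}$, making $\deg f_V$ far larger than $O_d(A)$.

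The paper resolves all of this by never iterating on the reducible sets $Z(Q^{(V)})$ at all. It maintains, at each step $j$, a collection $\cU_j$ of \emph{irreducible} $j$-dimensional varieties. For each such $V$ it invokes an envelope-excising lemma (Proposition~3.4) to produce polynomials $Q_1,\ldots,Q_{d-j}$ with $\deg Q_k\lesssim_d\delta_k(V)$ together with auxiliary polynomials $F_k$ that vanish on the excess high-dimensional components of $Z(Q_1,\ldots,Q_{d-j})$; these $F_k$ are multiplied into $f_V$ to enforce the vanishing condition in the theorem statement. Crucially, $V$ is then placed in $\cV_i$ for the $i\geq j$ determined by $\delta_{d-i}(V)\le A<\delta_{d-i+1}(V)$, not in $\cV_j$: only the first $d-i$ of the $Q_k$'s are recorded, and their degrees (being $\lesssim_d\delta_k(V)\le A$) match both the $O_d(A)$ requirement and the denominator in Walsh's bound, which is exactly what makes \eqref{eq:size-of-parts} come out right. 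The descent then proceeds by decomposing $V\cap Z(f_V)$ into its irreducible components, each of dimension $j-1$, and feeding those into $\cU_{j-1}$. Your scheme of simply appending $f_V$ to the tuple and keeping a single element per level misses both the need for component-adapted $Q$'s and the possibility that $i>j$.
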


We defer the proof until the next section. For now we demonstrate how to use multilevel polynomial partitioning by using it to prove \cref{thm:main}. This requires bounding the number of cells in the partition and bounding the number of non-homogeneous $k$-tuples of cells. We deduce this from a Milnor--Thom-type result proved by Barone and Basu \cite{BB16}. The deduction from Barone and Basu's work is somewhat technical and proceeds in a similar way to how Walsh deduces \cite[Theorem 1.5]{Wal20}.

We place Barone and Basu's result in the following form which we find more convenient to work with. For a variety $V\subseteq\CC^d$ and a point $x\in V$, we write $\cdim V$ for the maximum dimension of an irreducible component of $V$ that contains $x$.

\begin{theorem}[{\cite{BB16}, see also \cite{Wal20}}]
\label{thm:barone-basu}
Given polynomials $Q_1,\ldots, Q_{d-r}\in\CC[x_1,\ldots,x_d]$,
the number of connected components of $Z(Q_1,\ldots,Q_{d-r})(\R)$ that contain a point $x$ with $\cdim{Z(Q_1,\ldots,Q_{d-r})}=r$ is at most
\[\lesssim_d\deg Q_1\cdots \deg Q_{d-r} \left(\max_{j=1,\ldots, {d-r}}\deg Q_j\right)^{r}.\]
\end{theorem}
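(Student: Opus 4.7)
The plan is to derive this from the refined Milnor--Thom estimate of Barone and Basu, which bounds the number of connected components (in fact the sum of all Betti numbers) of a real algebraic variety by a product of degrees of the defining polynomials weighted according to the real dimension of the variety. Their result, applied with $s$ real polynomials of degrees $D_1 \geq \cdots \geq D_s$ cutting out a real variety of real dimension at most $k'$, has the shape $O_d\bigl(\prod_{j=1}^{d-k'} D_j \cdot D_1^{k'}\bigr)$, matching our target with $s = d-r$ and $k' = r$. So the task is to put the given data into their framework.

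I would first reduce to real polynomials. Writing $Q_j = R_j + i S_j$ with $R_j, S_j \in \R[x_1, \ldots, x_d]$ of degree at most $\deg Q_j$, the polynomial $|Q_j|^2 := R_j^2 + S_j^2$ is real of degree at most $2 \deg Q_j$ and has the same real zero locus as $Q_j$. Hence $Z(Q_1, \ldots, Q_{d-r})(\R) = Z(|Q_1|^2, \ldots, |Q_{d-r}|^2)(\R)$, and the factor of $2$ appearing in each degree is absorbed into the $O_d$ constant.

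Next I would translate the complex-dimension hypothesis into a real-dimension statement. A point $x$ with $\cdim{Z(Q_1, \ldots, Q_{d-r})} = r$ must lie on some irreducible complex component of dimension exactly $r$ and avoid every component of strictly greater dimension; hence a small neighborhood of $x$ in the real variety has real dimension at most $r$. So every connected component that the theorem counts intersects the closed algebraic set $V_{\leq r}(\R)$, where $V_{\leq r} \subseteq Z(Q_1, \ldots, Q_{d-r})$ is obtained by retaining only those irreducible complex components of dimension at most $r$.

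The main obstacle is producing a real-polynomial description of a set lying between $V_{\leq r}(\R)$ and $Z(Q_1, \ldots, Q_{d-r})(\R)$ whose defining polynomials have degree $O_d(\max_j \deg Q_j)$, so that Barone--Basu can be applied cleanly without charging for spurious higher-dimensional components. I would handle this via a generic perturbation of the sort used by Walsh \cite{Wal20}: replace each $Q_j$ by $Q_j' := Q_j + \varepsilon P_j$ for a generic complex polynomial $P_j$ of the same degree and an arbitrarily small $\varepsilon > 0$. Then $Z(Q_1', \ldots, Q_{d-r}')$ becomes a smooth complete intersection of pure complex dimension $r$, so its real part has real dimension at most $r$, and a standard continuity argument shows that every connected component we wish to count either persists as, or splits into, components of the perturbed real variety. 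Applying Barone--Basu to the perturbed system (converted to real polynomials via the $|\cdot|^2$ trick above) and letting $\varepsilon \to 0$ yields the claimed bound, with all losses absorbed into the constant $O_d$.
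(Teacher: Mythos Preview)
Your reduction to real polynomials and your observation that $\cdim{Z(Q)}=r$ bounds the local real dimension are fine, but the perturbation step has a genuine gap. The claim that every counted component ``either persists as, or splits into, components of the perturbed real variety'' is false. Already in the real setting, take $d=2$, $r=1$, $Q_1 = x^2+y^2$: the real zero set is the single point $\{0\}$, the complex zero set is a curve, and $\cdim[0]{Z(Q_1)}=1=r$, so this component is among those being counted. But for any perturbation $Q_1' = x^2+y^2+\varepsilon P$ with $P(0)>0$, the set $Z(Q_1')(\R)$ is empty near the origin for all small $\varepsilon>0$; the component simply vanishes. The set of such $P$ has nonempty interior, so genericity does not avoid it, and there is no single perturbation direction that works for all components simultaneously. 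Barone and Basu's own proof does use infinitesimal deformations, but in a Puiseux-series extension of $\R$ with carefully controlled signs, not via a naive limit over $\R$; invoking that mechanism correctly amounts to reproving their theorem.

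The paper's argument avoids perturbation entirely and instead exploits Walsh's stratified formulation of Barone--Basu. After reordering so that $\deg Q_1\le\cdots\le\deg Q_{d-r}$, the hypothesis $\cdim{Z(Q)}=r$ forces, by Krull's principal ideal theorem, $\cdim{Z(Q_1,\ldots,Q_j)}=d-j$ for every $j\le d-r$, and hence $\dim_x^{\R} Z(Q_1,\ldots,Q_j)\le d-j$. This places $x$ in a stratum with local real dimension sequence $\tau\le(d-1,d-2,\ldots,r)$, and \cite[Proposition~7.1]{Wal20} bounds the number of components meeting each such stratum by the desired product of degrees. Summing over the $O_d(1)$ admissible $\tau$ finishes the proof.
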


Note that by Krull's theorem, every $x\in Z(Q_1,\ldots,Q_{d-r})$ satisfies $\cdim{Z(Q_1,\ldots,Q_{d-r})}\geq r$ so this result counts connected components whose dimension is as small as possible. The above result follows immediately from \cite[Proposition 7.1]{Wal20} and \cite[Lemma 7.2]{Wal20}. For completeness, we include this brief deduction in the appendix.

Now we are in a position to bound the number of cells produced by the multilevel polynomial partitioning scheme.

\begin{lemma}
\label{thm:number-of-cells}
Given $A\geq 1$ and $P\subset\R^d$, let $V\in\cV_i$ and $\cC_V$ be as in \cref{thm:main-partition} for some $0\leq i\leq d$. Then
\[|\cC_V|\lesssim_d A^i\prod_{j=1}^{d-i}\deg Q_j^{(V)}.\]
Together with \cref{eq:total-deg}, this implies that $|\cC|\lesssim_d A^d$.
\end{lemma}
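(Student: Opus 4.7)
My plan is to combine \cref{thm:barone-basu} with a passage to an auxiliary variety in $\CC^{d+1}$.

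First, I would use the hypothesis on $f_V$ to restrict attention to the dimension-$i$ part of $Z(Q^{(V)})$. By Krull's theorem, every irreducible component of $Z(Q^{(V)})$ (cut out by $d-i$ equations in $\CC^d$) has complex dimension at least $i$, and by hypothesis $f_V$ vanishes on every component of dimension at least $i+1$. Hence every point $x\in Z(Q^{(V)})(\R)\setminus Z(f_V)$ must lie on an irreducible component of $Z(Q^{(V)})$ of complex dimension exactly $i$.

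Second, to convert the count of cells in $\cC_V$ into a count of connected components of a variety, I would introduce a new variable $y$ and form the auxiliary variety
\[\tilde V := Z\paren{Q_1^{(V)},\ldots,Q_{d-i}^{(V)},\,y f_V - 1}\subseteq\CC^{d+1}.\]
The projection $(x,y)\mapsto x$ restricts to a homeomorphism $\tilde V(\R)\to Z(Q^{(V)})(\R)\setminus Z(f_V)$ (with continuous inverse $x\mapsto(x,1/f_V(x))$), so the two sets have the same number of connected components. The observation from the first step ensures that $\tilde V$ has complex dimension exactly $i$: the dimension-$(\geq i+1)$ components of $Z(Q^{(V)})\times\CC$ are eliminated by the relation $y f_V = 1$ (as $f_V\equiv 0$ on them), while each dimension-$i$ component on which $f_V\not\equiv 0$ is cut by exactly one. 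Thus every point of $\tilde V$ satisfies $\cdim = i$, and \cref{thm:barone-basu} applied in $\CC^{d+1}$ with $r=i$ gives a bound on $|\cC_V|$.

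The hard part is the degree accounting. A direct invocation of \cref{thm:barone-basu} would contribute an extra factor of $\deg(y f_V - 1)=O_d(A)$ relative to the target $A^i\prod_j\deg Q_j^{(V)}$, producing one spurious power of $A$. Avoiding this loss is the analog of Walsh's deduction of \cite[Theorem 1.5]{Wal20}: one uses a refined form of Barone--Basu that exploits the fact that $y f_V - 1$ is linear in the auxiliary variable $y$, effectively decoupling its contribution from the $(\max\deg)^i$ factor. Once the per-$V$ bound $|\cC_V|\lesssim_d A^i\prod_{j=1}^{d-i}\deg Q_j^{(V)}$ is established, summing over $V\in\cV_i$ via \cref{eq:total-deg} and then over $0\le i\le d$ yields the global bound $|\cC|\lesssim_d A^d$.
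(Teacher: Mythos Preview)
Your Rabinowitsch trick is a natural idea, but it is not how the paper proceeds, and the gap you flag at the end is real and unresolved.

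The paper's argument stays in $\CC^d$ and uses a three-case split: a connected component $C$ of $Z(Q^{(V)})(\R)\setminus Z(f_V)$ is either (i) already a connected component of $Z(Q^{(V)})(\R)$, or else $f_V$ takes (ii) all values in $(0,\epsilon_C]$ or (iii) all values in $[-\epsilon_C,0)$ on $C$. Case (i) is handled by \cref{thm:barone-basu} directly with $r=i$. For (ii) and (iii) one picks $\epsilon>0$ generic and small and observes that each such $C$ contains a connected component of the level set $Z(Q^{(V)},f_V\mp\epsilon)(\R)$. The crucial point is that adding the extra equation $f_V\mp\epsilon=0$ drops the local complex dimension from $i$ to $i-1$, so \cref{thm:barone-basu} now applies with $r=i-1$: the factor $(\max\deg)^{i-1}$ has one fewer power of $A$, exactly cancelling the extra $\deg f_V\lesssim_d A$ in the product. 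This is how the paper avoids the spurious factor.

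Your approach, by contrast, adds one equation \emph{and} one ambient dimension, so $r=i$ is unchanged and the extra $\deg(yf_V-1)\lesssim_d A$ in the product is not absorbed. Your proposed remedy---a ``refined Barone--Basu exploiting linearity in $y$''---is not something supplied by \cref{thm:barone-basu} or by the deduction you cite from \cite{Wal20}; no such decoupling appears there. Absent an actual statement and proof of this refinement, the argument does not close, and the per-$V$ bound is off by a factor of $A$. The level-set trick in the paper is the standard way to avoid this loss.
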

\begin{proof}
Consider a connected component $C$ of $Z(Q^{(V)})(\R)\backslash Z(f_V)$. Suppose $C$ is contained in $C'$, a connected component of $Z(Q^{(V)})(\R)$. If $C\neq C'$, then $Z(f_V)$ cuts $C'$ and so $C$ ``touches'' $Z(f_V)$. Thus we are in one of the three following cases:
    \begin{enumerate}[(i)]
    \item $C$ is a connected component of $Z(Q^{(V)})(\R)$; or
    \item there exists $\varepsilon_C>0$ such that $f_V$ takes all values $(0,\varepsilon_C]$ on $C$; or
    \item there exists $\varepsilon_C>0$ such that $f_V$ takes all values $[-\varepsilon_C,0)$ on $C$.
\end{enumerate}
Recall that $f_V$ vanishes on every irreducible component of $Z(Q^{(V)})$ of dimension at least $i+1$. Thus every $x\in C$ satisfies $\cdim{Z(Q^{(V)})}=i$. Therefore, by \cref{thm:barone-basu}, the number of connected components $C$ in case (i) above is at most
\[\lesssim_d\prod_{j=1}^{d-i}\deg Q_j^{(V)}\paren{\max_{j}\deg Q_j^{(V)}}^i\lesssim_d A^i\prod_{j=1}^{d-i}\deg Q_j^{(V)}.\]

To deal with the connected components in cases (ii) and (iii), pick $\varepsilon>0$ so that $\varepsilon<\varepsilon_C$ for all such $C$ and such that neither $f_V-\epsilon$ nor $f_V+\epsilon$ vanishes identically on any irreducible component of $Z(Q^{(V)})$. We can do this since there are only a finite number of $\epsilon_C$ and also a finite number of bad $\epsilon$ for which $f_V\pm\epsilon$ vanishes identically on some irreducible component of $Z(Q^{(V)})$.

Then each $C$ in case (ii) contains a connected component of $Z(Q^{(V)})(\R)\cap Z(f_V-\varepsilon)$.
Moreover, for any $x\in Z(Q^{(V)}, f_V-\varepsilon)$, we know that every irreducible component of $Z(Q^{(V)})$ which contains $x$ has dimension $i$. By our choice of $\epsilon$ we know that every irreducible component of $Z(Q^{(V)},f_V-\epsilon)$ containing $x$ has dimension $i-1$. In other words $\cdim{Z(Q^{(V)},f_V-\epsilon)}=i-1$. Therefore, applying \cref{thm:barone-basu} to $Z(Q^{(V)},f_V-\epsilon)(\R)$, we conclude that the number of connected components $C$ in case (ii) above is at most 
\[\lesssim_d\deg f_V\prod_{j=1}^{d-i}\deg Q_j^{(V)}\paren{\max\set{\deg f_V,\deg Q_1^{(V)},\ldots,\deg Q_{d-i}^{(V)}}}^{i-1}\lesssim_d A^i\prod_{j=1}^{d-i}\deg Q_j^{(V)}.\]

Applying the same argument to $Z(Q^{(V)},f_V+\epsilon)(\R)$ gives the same bound for case (iii), completing the proof.
\end{proof}

By a similar but more involved argument we can prove the following lemma which is crucial for bounding the number of non-homogeneous $k$-tuples of cells.

\begin{lemma}
\label{thm:number-of-bad-tuples}
Given finite sets $P_1,\ldots,P_k\subset\R^d$, let $\cV^{(1)},\ldots,\cV^{(k)}$ be the data produced by applying \cref{thm:main-partition} with parameter $A\geq 1$. Consider $V_1\in\cV^{(1)}_{i_1},\ldots,V_k\in\cV^{(k)}_{i_k}$ and a real polynomial $g\in\R[x_{11},\ldots,x_{kd}]$. The number of $k$-tuples of cells $(C_1,\ldots,C_k)\in\cC_{V_1}\times\cdots\times\cC_{V_k}$ such that $\sgn(g(x_{11},\ldots,x_{kd}))$ is not constant on $C_1\times\cdots\times C_k\subseteq\R^{dk}$ is at most
\[\lesssim_{d,k} \deg g\cdot A^{i_1+\cdots+i_k-1}\prod_{\ell=1}^k\prod_{j=1}^{d-i_\ell}\deg Q_j^{(V_\ell)}.\]
\end{lemma}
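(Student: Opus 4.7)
The plan is to lift everything to $\R^{dk}$ and mimic the proof of \cref{thm:number-of-cells}. For each $\ell$, let $\wt Q_j^{(V_\ell)}, \wt f_{V_\ell} \in \R[x_{11},\ldots,x_{kd}]$ be the polynomials obtained by viewing $Q_j^{(V_\ell)}, f_{V_\ell}$ as polynomials depending only on the block of variables $(x_{\ell 1},\ldots,x_{\ell d})$; set $\wt Z = Z(\{\wt Q_j^{(V_\ell)}\}_{\ell,j}) \subset \CC^{dk}$ and $F = \prod_{\ell=1}^k \wt f_{V_\ell}$. Each product cell $C_1 \times \cdots \times C_k$ is then a connected component of $\wt Z(\R) \setminus Z(F)$, and by the local-dimension argument used in the proof of \cref{thm:number-of-cells}, each of its points lies in an irreducible component of $\wt Z$ of local complex dimension exactly $\sum_\ell i_\ell$ (each factor contributes $i_\ell$ since $f_{V_\ell}$ vanishes on all higher-dimensional components of $Z(Q^{(V_\ell)})$). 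Moreover, if $\sgn(g)$ is non-constant on the connected set $C_1 \times \cdots \times C_k$, then by continuity this set must meet $Z(g)$, so each bad tuple contributes at least one connected component of $Y := \wt Z(\R) \cap Z(g) \setminus Z(F)$.

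On any irreducible component of $\wt Z$ where $g$ vanishes identically, $\sgn(g) \equiv 0$ is constant, so there are no bad tuples there and I can discard such components; on the remaining ones $g$ properly cuts, so $\wt Z \cap Z(g)$ has local complex dimension $\sum_\ell i_\ell - 1$ at the relevant points. Following the case split in the proof of \cref{thm:number-of-cells}, each component of $Y$ I need to count either (i) is a full component of $\wt Z(\R) \cap Z(g)$, or (ii) is cut from a larger component by some $Z(\wt f_{V_\ell})$, meaning $\wt f_{V_\ell}$ takes all values in $(0, \epsilon_D]$ or $[-\epsilon_D, 0)$ on it for some $\epsilon_D > 0$. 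For case (i), I apply \cref{thm:barone-basu} to $Z(\{\wt Q_j^{(V_\ell)}\}_{\ell,j}, g) \subset \CC^{dk}$, a variety defined by $dk - \sum_\ell i_\ell + 1$ polynomials with product of degrees $\deg g \cdot \prod_{\ell,j} \deg Q_j^{(V_\ell)}$ and maximum degree $\lesssim_d A$; counting components of local complex dimension $\sum_\ell i_\ell - 1$ gives at most $\lesssim_{d,k} \deg g \cdot A^{\sum_\ell i_\ell - 1} \prod_{\ell,j} \deg Q_j^{(V_\ell)}$.

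For case (ii), I would pick $\epsilon > 0$ small enough that $\epsilon < \epsilon_D$ for every case-(ii) component (only finitely many to avoid) and such that no $\wt f_{V_\ell} \pm \epsilon$ vanishes identically on any irreducible component of $\wt Z \cap Z(g)$ (again a finite set of bad values). Each case-(ii) component then contains a connected component of $\wt Z(\R) \cap Z(g) \cap Z(\wt f_{V_\ell} \mp \epsilon)$ for some $\ell$ and some sign; applying \cref{thm:barone-basu} to each such intersection---now with one more defining polynomial of degree $\lesssim_d A$, lowering the dimension to $\sum_\ell i_\ell - 2$---and summing over the $2k$ choices of $(\ell, \pm)$ yields the same upper bound $\lesssim_{d,k} \deg g \cdot A^{\sum_\ell i_\ell - 1} \prod_{\ell,j} \deg Q_j^{(V_\ell)}$. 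The main technical subtlety to handle carefully is the local-dimension bookkeeping---ensuring that $g$ (and, in case (ii), also $\wt f_{V_\ell} \pm \epsilon$) properly cuts dimension on every irreducible component of the variety being counted---which is exactly what the discards above and the careful choice of $\epsilon$ accomplish.
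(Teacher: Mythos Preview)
Your overall strategy---lift to $\R^{dk}$, identify product cells with connected components of $\wt Z(\R)\setminus Z(F)$, then count components of the slice by $Z(g)$ via \cref{thm:barone-basu} after the case split of \cref{thm:number-of-cells}---is exactly the paper's, but there are two gaps.

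The serious one is the ``discard'' step. A product cell is a connected component of the \emph{real} locus $\wt Z(\R)\setminus Z(F)$ and nothing prevents it from meeting several \emph{complex} irreducible components of $\wt Z$, including ones on which $g$ vanishes identically. Concretely, take $d=3$, $k=1$, $i_1=2$, $Q_1=xy$, $f_V=(x-1)(y-1)$, and $g=x$. The cell $C$ containing the origin is the union of the half-planes $\{x=0,\,y<1\}$ and $\{y=0,\,x<1\}$ glued along the $z$-axis; on it $g=x$ takes both signs, so $C$ is bad. But $C\cap Z(g)=\{x=0,\,y<1\}$ lies entirely inside the irreducible component $\{x=0\}$ of $\wt Z$ on which $g$ vanishes identically, so at every point of $C\cap Z(g)$ the local complex dimension of $Z(\wt Q,g)$ is $2=\sum i_\ell$, not $\sum i_\ell-1$, and \cref{thm:barone-basu} does not count it---neither directly in your case (i), nor after adding $f_V-\epsilon$ in your case (ii), where the dimension is still $1$ rather than $\sum i_\ell-2=0$. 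You cannot literally discard the offending components of $\wt Z$ either, since the remainder is no longer of the form $Z(\text{polynomials})$. The paper's fix is to replace $g$ by $g-\delta$ for a small generic $\delta>0$ (for cells on which $g$ takes positive values and $0$; symmetrically $g+\delta$ for the negative side): since $g$ takes only finitely many constant values on the finitely many irreducible components of $Z(\wt Q)$, one can choose $\delta$ so that $g-\delta$ vanishes on none of them, and then the dimension genuinely drops by one everywhere.

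The minor gap is the assertion that the maximum degree is $\lesssim_d A$ when invoking \cref{thm:barone-basu}: this requires $\deg g\le A$, without which the bound degrades to $(\deg g)^{\sum i_\ell}\prod_{\ell,j}\deg Q_j^{(V_\ell)}$. The paper disposes of the case $\deg g\ge A$ at the outset by the trivial bound $|\cC_{V_1}\times\cdots\times\cC_{V_k}|$ from \cref{thm:number-of-cells}.
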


\begin{proof}
If $\deg g\geq A$, then using \cref{thm:number-of-cells} to bound $|\cC_{V_1}\times\cdots\times\cC_{V_k}|$ suffices to prove the desired result. Thus we assume that $\deg g\leq A$ from now on.

Define the list of polynomials
\[\tilde Q=\bigsqcup_{\ell=1}^k \set{Q_1^{(V_\ell)}(x_{\ell1},\ldots,x_{\ell d}),\ldots,Q_{d-i_\ell}^{(V_\ell)}(x_{\ell1},\ldots,x_{\ell d})}.\]
Next, define the real polynomial
\[\tilde f=\prod_{\ell=1}^k f_{V_\ell}(x_{\ell1},\ldots,x_{\ell d}).\]
Note that $Z(\tilde Q)(\R)\setminus Z(\tilde f)=\prod_{j=1}^k Z(Q^{(V_j)})(\R)\setminus Z(f_{V_j})$, so the set of connected components of $Z(\tilde Q)(\R)\setminus Z(\tilde f)$ is exactly $\set{C_1\times\cdots\times C_k:(C_1,\ldots,C_k)\in\cC_{V_1}\times\cdots\times\cC_{V_k}}$.

Since $f_{V_j}$ vanishes identically on the all irreducible components of $Z(Q^{(V_j)})$ with dimension at least $i_j+1$, we see that each point $x\in Z(\tilde Q)(\R)\setminus Z(\tilde f)$ satisfies $\cdim{Z(\tilde Q)}=i_1+\cdots+i_k$.

Define $\cC_+$ to be the set of connected components of $Z(\tilde Q)(\R)\setminus Z(\tilde f)$ on which $g$ takes both positive values and 0. Similarly define $\cC_-$ as the set of connected components on which $g$ takes both negative values and 0. Since $g\colon\R^{dk}\to\R$ is continuous in the Euclidean topology, it is clear that if $\sgn(g(x_{11},\ldots,x_{kd}))$ is not constant on a connected component $C$, then $C\in\cC_+\cup\cC_-$. We now bound $|\cC_+|$; bounding $|\cC_-|$ is symmetric.

Since $Z(\tilde Q)$ has finitely many irreducible components, there are a finite number of bad $\delta\in\R$ such that $g-\delta$ vanishes identically on an irreducible component of $Z(\tilde Q)$. For each $C\in\cC_+$, there exists $\delta_C>0$ so that $g$ takes all values $[0,\delta_C]$ on $C$. Now pick $\delta>0$ so that $\delta<\delta_C$ for all $C\in\cC_+$ and so that $g-\delta$ does not vanish identically on any irreducible component of $Z(\tilde Q)$. Clearly each $C\in\cC_+$ contains a connected component of $Z(\tilde Q,g-\delta)(\R)\setminus Z(\tilde f)$.

Furthermore, since each point $x\in Z(\tilde Q)(\R)\setminus Z(\tilde f)$ satisfies $\cdim{Z(\tilde Q)}=i_1+\cdots+i_k$, by our choice of $\delta$, we see that $\cdim{Z(\tilde Q,g-\delta)}=i_1+\cdots+i_k-1$ for all $x\in Z(\tilde Q,g-\delta)(\R)\setminus Z(\tilde f)$.

As in the proof of \cref{thm:number-of-cells}, each connected component $C$ of $Z(\tilde Q,g-\delta)(\R)\setminus Z(\tilde f)$ satisfies one of the three following properties:
\begin{enumerate}[(i)]
    \item $C$ is a connected component of $Z(\tilde Q,g-\delta)(\R)$; or
    \item there exists $\epsilon_C>0$ such that $\tilde f$ takes all values $(0,\epsilon_C]$ on $C$; or
    \item there exists $\epsilon_C>0$ such that $\tilde f$ takes all values $[-\epsilon_C,0)$ on $C$.
\end{enumerate}
Pick $\epsilon>0$ such that $\epsilon<\epsilon_C$ for all such $C$ and such that neither $\tilde f-\epsilon$ nor $\tilde f+\epsilon$ vanishes identically on any irreducible component of $Z(\tilde Q,g-\delta)$. Then we see that each point $x\in Z(\tilde Q,g-\delta,\tilde f\pm\epsilon)(\R)\setminus Z(\tilde f)$ satisfies $\cdim{Z(\tilde Q,g-\delta,\tilde f\pm\epsilon)}=i_1+\cdots+i_k-2$.

Thus by \cref{thm:barone-basu}, the number of $C$ in case (i) is bounded by
\[\begin{split}\lesssim_{d,k}\deg g\prod_{\ell=1}^k\prod_{j=1}^{d-i_\ell}\deg Q_j^{(V_\ell)}&\paren{\max\set{\deg g,\deg Q_1^{(V_1)},\ldots,\deg Q_{d-i_k}^{(V_k)}}}^{i_1+\cdots+i_k-1}\\
&\lesssim_{d,k} \deg g\cdot A^{i_1+\cdots+i_k-1}\prod_{\ell=1}^k\prod_{j=1}^{d-i_\ell}\deg Q_j^{(V_\ell)}.
\end{split}\]

Furthermore, each $C$ in case (ii) above contains a connected component of $Z(\tilde Q,g-\delta,\tilde f-\epsilon)(\R)$ and each $C$ in case (iii) above contains a connected component of $Z(\tilde Q,g-\delta,\tilde f+\epsilon)(\R)$. Again by \cref{thm:barone-basu}, the number of $C$ in cases (ii) and (iii) is bounded by
\[\begin{split}
\lesssim_{d,k}\deg g\cdot\deg \tilde f\prod_{\ell=1}^k\prod_{j=1}^{d-i_\ell}\deg Q_j^{(V_\ell)}&\paren{\max\set{\deg g,\deg\tilde f,\deg Q_1^{(V_1)},\ldots,\deg Q_{d-i_k}^{(V_k)}}}^{i_1+\cdots+i_k-2}\\
&\lesssim_{d,k} \deg g\cdot A^{i_1+\cdots+i_k-1}\prod_{\ell=1}^k\prod_{j=1}^{d-i_\ell}\deg Q_j^{(V_\ell)}.
\end{split}\]

Summing these three cases gives the desired bound on $|\cC_+|$. An identical argument bounds $|\cC_-|$.
\end{proof}

\begin{proof}[Proof of \cref{thm:main}]
For some large constant $C_{d,k}$, define $A=C_{d,k}D/\epsilon\geq 1$. Let $\cV^{(1)},\ldots,\cV^{(k)}$ be the data produced by applying \cref{thm:main-partition} to $P_1,\ldots, P_k$ respectively with parameter $A$. Let $\Pi_1,\ldots,\Pi_k$ be the partitions of $P_1,\ldots, P_k$ produced. Then by \cref{thm:number-of-cells} and \cref{eq:total-deg},
\[|\Pi_\ell|=|\cC^{(\ell)}|=\sum_{i=0}^d\sum_{V\in\cV_i^{(\ell)}}|\cC_V|\lesssim_d\sum_{i=0}^d\sum_{V\in\cV_i^{(\ell)}}A^i\prod_{j=1}^{d-i}\deg Q_j^{(V)}\lesssim_d\sum_{i=0}^dA^d\lesssim A^d\]
for each $1\leq \ell\leq k$.

Now let $H$ be any semialgebraic hypergraph on $P_1,\ldots,P_k$ defined by polynomials $g_1,\ldots,g_t$ with $\sum_r\deg g_r\leq D$. By \cref{thm:number-of-bad-tuples}, for each $g_r$ and each $V_1\in\cV_{i_1}^{(1)},\ldots,V_k\in\cV_{i_k}^{(k)}$, the number of $k$-tuples of cells in $\cC_{V_1}\times\cdots\times\cC_{V_k}$ on which $\sgn(g_r(x_{11},\ldots,x_{kd}))$ is not constant is bounded by
\[\lesssim_{d,k} \deg g_r\cdot A^{i_1+\cdots+i_k-1}\prod_{\ell=1}^k\prod_{j=1}^{d-i_\ell}\deg Q_j^{(V_\ell)}.\]
By \cref{eq:size-of-parts}, the fraction of $k$-tuples of vertices contributed by each $k$-tuple of cells $(C_1,\ldots,C_k)\in\cC_{V_1}\times\cdots\times\cC_{V_k}$ is
\[\prod_{\ell=1}^k\frac{|P_{C_\ell}|}{|P_\ell|}\lesssim_{d,k}\prod_{\ell=1}^k\frac{|P_{V_\ell}|}{A^{i_\ell}|P_\ell|\prod_{j=1}^{d-i_\ell}\deg Q_j^{(V_\ell)}}.\]
Summing the contributions of all $k$-tuple of cells with this particular choice of $r$ and $V_1,\ldots,V_k$, we get a contribution of
\[\lesssim_{d,k}\frac{\deg g_r}{A}\prod_{\ell=1}^{k}\frac{|P_{V_\ell}|}{|P_\ell|}.\]
By \cref{eq:total-size-of-point-sets}, summing over all choices of $V_1,\ldots, V_k$ gives a contribution of
\[\lesssim_{d,k}\frac{\deg g_r}{A}.\]
Finally, summing over the choices of $r$, we see that at most a
\[\lesssim_{d,k}\sum_{r=1}^t\frac{\deg g_r}{A}\leq\frac{D}{A}=\frac{\epsilon}{C_{d,k}}\]
fraction of $k$-tuples of vertices lie in a $k$-tuple of cells on which $\sgn(g_r(x_{11},\ldots,x_{kd}))$ is not constant for some $r$. Choosing $C_{d,k}$ large enough makes the fraction at most $\epsilon$, which means that $\Pi_1,\ldots,\Pi_k$ form a homogeneous partition of $H$ with error $\epsilon$.
\end{proof}

\begin{proof}[Proof of \cref{thm:main-equitable}]
The deduction of the $d\geq2$ case of \cref{thm:main-equitable} from \cref{thm:main} is standard (see, e.g., \cite[Proof of Theorem 1.3]{FPS16}). We include a proof for completeness.

Let $\Pi_1,\ldots, \Pi_k$ be the partitions of $P_1,\ldots,P_k$ produced by \cref{thm:main} with parameters $d,k,D\geq1$ and $\epsilon/2$. For each $1\leq i\leq k$, say say $N_i=|P_i|$ and define $K_i=\ceil{4k\epsilon^{-1}|\Pi_i|}$. Index $P_i=\{p_1,\ldots,p_{N_i}\}$ so that each part $\pi\in\Pi_i$ is an interval in this indexing. Now define $\Pi_i'$ to be an equitable partition of $P_i$ into $K_i$ pieces, each of which is an interval of length $\floor{N_i/K_i}$ or $\ceil{N_i/K_i}$. We claim that $\Pi_1',\ldots,\Pi_k'$ have the desired properties.

Call a part $\pi\in\Pi_i'$ bad if it is not contained in a part of $\Pi_i$. Note that the number of bad parts is at most $|\Pi_i|$ and each bad part has size at most $2N_i/K_i$. For a semialgebraic hypergraph $H$ on $P_1\sqcup\cdots\sqcup P_k$, a $k$-tuple of vertices $(x_1,\ldots,x_k)$ is contained in a $k$-tuple of parts of $(\pi_1,\ldots,\pi_k)\in \Pi_1'\times\cdots\times\Pi_k'$ that is not homogeneous only if either it is contained in a $k$-tuple of parts of $\Pi_1\times\cdots\times\Pi_k$ that is not homogeneous or one of the $\pi_i$ is bad. The number of $k$-tuples of vertices of the first type is at most $(\epsilon/2)N_1\cdots N_k$. The number of $k$-tuples of vertices of the second type is at most
\[\sum_{i=1}^k\paren{\prod_{j\neq i} N_j}\frac{2N_i}{K_i}|\Pi_i|= N_1\cdots N_k\sum_{i=1}^k \frac{2|\Pi_i|}{K_i}\leq \frac{\epsilon}{2}N_1\cdots N_k.\]

Therefore $\Pi_1',\ldots,\Pi_k'$ form a homogeneous partition of $H$ with error $\epsilon$. Note that
\[|\Pi_i'|=K_i=\ceil{4k\epsilon^{-1}|\Pi_i|}=O_{d,k}(D^d\epsilon^{-(d+1)}).\]

Now we show how to save a factor of $\epsilon$ when $d=1$. For $P_i\subset\R$ and any positive integer $A$, we can find $y_1,\ldots,y_{A-1}$ so that
\[P_i\cap(-\infty,y_1), P_i\cap(y_1,y_2),\ldots, P_i\cap(y_{A-2},y_{A-1}), P_i\cap(y_{A-1},+\infty)\]
is an equitable partition of $P_i$ into $A$ pieces. Instead of applying \cref{thm:main-partition}, we will construct the data that the theorem produces by hand. Define $\cV_0=\emptyset$ and $\cV_1=\set{(Q^{(\CC)},f_{\CC},P_{\CC})}$ where $Q^{(\CC)}=\emptyset$, where $P_{\CC}=P_i$, and where $f_{\CC}(x)=\prod_{j=1}^{A-1}(x-y_j)$. One can easily check that these satisfy all of the desired properties and that the partition of $P_i$ produced is the above equitable partition.

Applying this procedure for $i=1,\ldots,k$ with $A=C_{1,k}D/\epsilon$, we produce equitable partitions $\Pi_1,\ldots,\Pi_k$ of $P_1,\ldots, P_k$. Then the proof of \cref{thm:main} shows that these partitions have the desired property.
\end{proof}

\section{Constructing the partition}
\label{sec:proof-of-mpps}

In this section, we explain how to construct the multilevel polynomial partition in \cref{thm:main-partition}. The key tool is a quantitatively efficient real polynomial partitioning result over irreducible varieties introduced by Walsh \cite{Wal20}. This allows us to afford to do real polynomial partitioning with polynomials of super-constant degree and lets us avoid the $o(1)$-loss in the exponent. These tools were also used in a later paper of Walsh \cite{Wal23} to prove an optimal point-curve incidence bound, though that setting is considerably more technical than this one.

To introduce Walsh's real polynomial partitioning result, we need to introduce a more refined notion of degree that gives more precise quantitative control over a variety. This will be used extensively throughout the proof.

\begin{definition}[Partial degrees]
For an irreducible variety $V\subseteq\CC^d$ and $1\leq i \leq d-\dim V$, define the \emph{$i$-th partial degree}, $\delta_i(V)$, to be the minimum positive integer such that there exist polynomials $f_1,\ldots,f_t\in\CC[x_1,\ldots,x_d]$ of degree at most $\delta_i(V)$ such that $V\subseteq Z(f_1,\ldots,f_t)$ and such that every irreducible component of $Z(f_1,\ldots,f_t)$ containing $V$ has dimension at least $d-i$. We define $\delta_0(V)=0$ and $\delta_i(V)=\infty$ for $i>d-\dim V$.
\end{definition}

By a short argument, \cite[Lemma 2.3]{Wal20}, these are known to satisfy $\delta_1(V)\leq\delta_2(V)\leq\cdots\leq\delta_{d-\dim V}(V)$.

To give some intuition for this definition, consider polynomials $Q_1,\ldots, Q_{d-r}\in\CC[x_1,\ldots x_d]$ satisfying $\deg Q_1\leq \cdots\leq\deg Q_{d-r}$. Suppose that the variety $V=Z(Q_1,\ldots, Q_{d-r})$ is irreducible. If $Z(Q_1),\ldots,Z(Q_{d-r})$ intersect nicely enough, (i.e., if $V$ is a complete intersection) then $V$ will have dimension $r$ and degree $\deg Q_1\cdots\deg Q_{d-r}$. Furthermore, in this case $\delta_i(V) = \deg Q_i$ for all $i$. Although not every irreducible variety is a complete intersection, Walsh's techniques say that we are always approximately in this situation. In particular, for any irreducible variety $V\subseteq\CC^d$ of dimension $r$, there exist polynomials $Q_1,\ldots,Q_{d-r}\in\CC[x_1,\ldots, x_d]$ so that $V$ is an irreducible component of $Z(Q_1,\ldots, Q_{d-r})$ and so that $\delta_i(V)=\deg Q_i$. Furthermore, $\deg V$ and $\deg Q_1\cdots\deg Q_{d-r}$ will be equal up to a multiplicative constant that only depends on the ambient dimension. We state the second part of this result below and something a little stronger than the first result as \cref{thm:envelope-excising}.

\begin{proposition}[{\cite[Corollaries 5.4 and 5.7]{Wal20}}]
\label{thm:inverse-bezout}
For an irreducible variety $V\subseteq\CC^d$,
\[\prod_{i=1}^{d-\dim V}\delta_i(V)\lesssim_d \deg V\leq\prod_{i=1}^{d-\dim V}\delta_i(V).\]
\end{proposition}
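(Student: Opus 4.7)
The plan is to prove the two inequalities by separate arguments. For the upper bound $\deg V \leq \prod_{i=1}^{d-r}\delta_i(V)$ (writing $r = \dim V$), I would extract from the polynomials witnessing each $\delta_i(V)$ a single polynomial $Q_i$ of degree at most $\delta_i(V)$, chosen as a generic linear combination, so that $V$ becomes an isolated irreducible component of $Z(Q_1,\ldots,Q_{d-r})$. The classical Bezout inequality (as in Fulton) then yields
\[\deg V \leq \deg Z(Q_1,\ldots,Q_{d-r}) \leq \prod_{i=1}^{d-r} \deg Q_i \leq \prod_{i=1}^{d-r}\delta_i(V).\]
The nontrivial step is to verify that generic linear combinations really do isolate $V$ as a component, rather than leaving it strictly inside some larger component of the intersection; this uses the dimension control on components containing $V$ that is built into the definition of each $\delta_i(V)$, together with an inductive application of Krull's principal ideal theorem.

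For the lower bound $\prod_{i=1}^{d-r}\delta_i(V)\lesssim_d \deg V$, I would iteratively construct $Q_1,\ldots,Q_{d-r}$ with $\deg Q_i$ providing the required upper bound on $\delta_i(V)$. At step $i$, having fixed an irreducible component $W_{i-1}$ of $Z(Q_1,\ldots,Q_{i-1})$ containing $V$ of dimension $d-i+1$, one searches for $Q_i$ of small degree vanishing on $V$ but not identically on $W_{i-1}$. Existence of such a $Q_i$ is a Hilbert function calculation: polynomials of degree at most $e$ form a space of dimension $\binom{d+e}{d}$, and those vanishing on $V$ (resp.\ $W_{i-1}$) have codimensions given by the Hilbert functions $h_V(e)\sim(\deg V)e^r/r!$ and $h_{W_{i-1}}(e)\sim(\deg W_{i-1})e^{d-i+1}/(d-i+1)!$ respectively. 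Since $V\subsetneq W_{i-1}$, the first grows strictly slower in $e$, so for $e$ of order $(\deg V/\deg W_{i-1})^{1/(d-i+1-r)}$ a separating polynomial exists, and this gives the bound on $\delta_i(V)$. Telescoping the resulting bounds across $i=1,\ldots,d-r$ yields the desired inequality.

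The main obstacle is the bookkeeping in this telescoping step: one only has partial Bezout control $\deg W_i \lesssim_d \prod_{j\leq i}\deg Q_j$, so making the product of the $d-r$ individual Hilbert-function bounds collapse to $O_d(\deg V)$ requires careful coordination of the choices and tracking of the dimension-dependent constants. A conceptually cleaner route, which I believe is the one taken in Walsh's paper, proceeds via the Chow form of $V$: this realizes $V$ as a hypersurface of degree equal to $\deg V$ in a suitable Grassmannian, and specialization of coordinates produces polynomials in $\CC[x_1,\ldots,x_d]$ whose degrees are automatically organized into a pattern $\delta_1\leq\cdots\leq\delta_{d-r}$ with product bounded by a constant depending only on $d$ times $\deg V$.
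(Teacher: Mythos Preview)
The paper does not prove this proposition at all: it is quoted directly from Walsh \cite[Corollaries 5.4 and 5.7]{Wal20} and used as a black box, so there is no proof in the present paper to compare your proposal against.

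That said, your sketch is broadly in line with how Walsh actually proceeds. The upper bound is exactly his route: one first shows (his Lemma 5.3, also alluded to in this paper just before \cref{thm:envelope-excising}) that there exist $Q_1,\ldots,Q_{d-r}$ with $\deg Q_i=\delta_i(V)$ such that $V$ is an irreducible component of $Z(Q_1,\ldots,Q_{d-r})$, and then B\'ezout gives $\deg V\le\prod_i\deg Q_i$. Your description of extracting each $Q_i$ as a generic linear combination, and using the dimension control in the definition of $\delta_i$ together with Krull's theorem to force $V$ to be a component, is the correct mechanism. For the lower bound, Walsh's argument is indeed of Hilbert-function type, much closer to your first outline than to the Chow-form remark; no Chow forms appear in his proof of Corollary 5.7. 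The ``telescoping bookkeeping'' you flag as the main obstacle is real, and Walsh handles it by proving the somewhat sharper statement that $\delta_i(V)\lesssim_d(\deg V)^{1/(d-\dim V -i+1)}$ for each $i$ separately, from which the product bound follows; organising the inductive choice of the $W_i$ so that $\deg W_{i-1}$ is controlled by $\prod_{j<i}\delta_j(V)$ (again via B\'ezout on the previously chosen $Q_j$) is what makes the individual bounds multiply to $O_d(\deg V)$.
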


Having introduced the definition of partial degrees, we can now state the result of Walsh that allows us to do efficient real polynomial partitioning over any irreducible varieties. The formulation below is slightly different from what is stated in his paper. We include a short deduction in the appendix.

\begin{theorem}[{\cite[Theorem 3.2]{Wal20}}]
\label{thm:real-poly-part}
Let $V\subseteq\CC^d$ be an irreducible variety and let $P\subseteq V(\R)$ be a finite set of points. For $A\geq 1$, there exists a polynomial $f\in\R[x_1,\ldots,x_d]$ with $\deg f\lesssim_d A$ so that $f$ does not vanish identically on $V$ and each connected component of $\R^d\setminus Z(f)$ contains at most
\[\lesssim_d\frac{|P|}{A^{i}\prod_{j=1}^{d-i}\delta_j(V)}\]
points of $P$, where $\dim V\leq i\leq d$ is the unique integer with $\delta_{d-i}(V)\leq A<\delta_{d-i+1}(V)$.
\end{theorem}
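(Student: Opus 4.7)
The plan is to follow Walsh's polynomial partitioning strategy, which adapts the Guth--Katz iterated ham-sandwich argument so that the partitioning polynomial sees the algebra of $V$ through its real Hilbert function. Set $N_A := \dim_\R \R[x_1,\ldots,x_d]_{\leq A}/(I(V)\cap\R[x_1,\ldots,x_d]_{\leq A})$, the dimension of the space of real polynomial functions on $V$ of degree at most $A$. The desired per-cell bound $|P|/(A^i\prod_{j=1}^{d-i}\delta_j(V))$ should morally equal $|P|/N_A$, so the whole statement reduces to (i) a lower bound $N_A \gtrsim_d A^i\prod_{j=1}^{d-i}\delta_j(V)$ in the relevant range of $A$, and (ii) the production of a real polynomial $f$ of degree $O_d(A)$, not vanishing identically on $V$, whose open cells each contain at most $O_d(|P|/N_A)$ points.

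For (ii) I would first establish polynomial ham-sandwich over $V$: given $N_A - 1$ disjoint finite subsets of $V(\R)$, a Borsuk--Ulam argument applied in the $N_A$-dimensional real space $\R[x]_{\leq A}/(I(V)\cap\R[x]_{\leq A})$ produces a nonzero real polynomial $g$ of degree at most $A$, not vanishing on $V$, whose zero locus bisects each of the input sets. Then I would run iterative halving: at stage $j$ we hold $2^{j-1}$ disjoint subsets of $P$ each of size at most $|P|/2^{j-1}$; pick the smallest parameter $A_j$ with $N_{A_j}\geq 2^{j-1}$ and apply ham-sandwich to obtain $g_j$ of degree $\leq A_j$ simultaneously bisecting all current pieces. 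After $k = O(\log N_A)$ stages, once $2^{j-1}\geq N_A/2$, set $f = g_1\cdots g_k$. Each connected component of $\R^d\setminus Z(f)$ has a fixed sign vector with respect to $(g_1,\ldots,g_k)$, so meets $P$ in at most $|P|/2^k \lesssim |P|/N_A$ points; since $V$ is irreducible and no $g_j$ vanishes on $V$, neither does $f$. To get $\deg f = \sum_j A_j \lesssim_d A$ one uses that under (i) the function $A\mapsto N_A$ is polynomial in $A$ of positive degree throughout the relevant window, so inverting yields a geometric sequence $A_1 < A_2 < \cdots < A_k \leq A$ whose sum is dominated by its final term.

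For (i), which is the main obstacle, the model case is a complete intersection $V = Z(Q_1,\ldots,Q_{d-r})$ with $\deg Q_j = \delta_j(V)$: an explicit monomial basis modulo the ideal gives the lower bound on $N_A$ by direct count, using that $A\geq\delta_{d-i}(V)$ allows full degree freedom in $i$ of the coordinate directions. A general irreducible $V$ need not be a complete intersection, and its defining ideal need not even be generated by polynomials realizing the $\delta_j$, so the translation from the partial-degree definition (a geometric condition on how $V$ embeds) to a Hilbert-function count requires the ``effective complete intersection'' machinery developed by Walsh in the neighborhood of \cref{thm:inverse-bezout}. Concretely, one approximates $V$ by a complete intersection of polynomials of degrees $\delta_1(V),\ldots,\delta_{d-r}(V)$ for which $V$ is a component, and verifies that the dimension count only loses a $d$-dependent multiplicative constant.

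I expect (i) to be where the genuine technical work sits, and where the appendix performs a short deduction from Walsh's stated result. The remaining steps --- polynomial ham-sandwich, iterative halving, and the sign-cell bound --- are by now standard polynomial-method tools that apply uniformly once the Hilbert-function estimate is in hand.
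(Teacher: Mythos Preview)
Your proposal sketches a from-scratch proof of Walsh's polynomial partitioning theorem itself---running the Guth--Katz iterated ham-sandwich in the quotient space $\R[x]_{\leq A}/(I(V)\cap\R[x]_{\leq A})$ and pairing it with a Hilbert-function lower bound. That is indeed how \cite{Wal20} proves its Theorem~3.2, and your outline of (ii) is correct; your identification of (i) as the real obstacle is also accurate. But the paper does not redo any of this.

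The paper's deduction treats \cite[Theorem~3.2]{Wal20} as a black box: Walsh's theorem already hands you the polynomial $f$ together with a per-cell bound of $\lesssim_d |P|/\bigl(A^{d-i_V(A)}\Delta_{i_V(A)}(V)\bigr)$, stated in terms of Walsh's own quantities $\Delta_i(V)=\max\bigl(\deg V/\prod_{j>i}\delta_j(V),\,1\bigr)$ and his index $i_V(A)$. The entire appendix argument is the inequality
\[A^{d-i_V(A)}\,\Delta_{i_V(A)}(V)\ \gtrsim_d\ A^{i}\prod_{j=1}^{d-i}\delta_j(V),\]
proved in two strokes. First, $\Delta_{i_V(A)}(V)\gtrsim_d\prod_{j\leq i_V(A)}\delta_j(V)$, which is a one-line extension of \cref{thm:inverse-bezout} (divide both sides of the inverse B\'ezout bound by $\prod_{j>i_V(A)}\delta_j(V)$). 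Second, the index $i$ in the statement is precisely the minimizer of $\ell\mapsto A^{\ell}\prod_{j=1}^{d-\ell}\delta_j(V)$, because the ratio of consecutive terms is $A/\delta_{d-\ell}(V)$ and the $\delta_j$ are nondecreasing; hence any other choice of $\ell$, in particular $\ell=d-i_V(A)$, gives a value at least as large. No property of $i_V(A)$ is used beyond $0\leq i_V(A)\leq d-\dim V$.

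So both routes are correct, but yours reproves Walsh's theorem while the paper merely reformulates it. What your approach buys is self-containment and an explanation of why partial degrees govern the cell count; what the paper's approach buys is a five-line appendix that never mentions Hilbert functions or ham-sandwich at all.
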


As mentioned above, we can approximate any irreducible variety $V\subseteq \CC^d$ by $Z(Q_1,\ldots, Q_{d-\dim V})$ for some polynomials  $Q_1,\ldots, Q_{d-\dim V}$ with $\deg Q_i = \delta_i(V)$ (see \cite[Lemma 5.3]{Wal20}). However, for technical reasons, we will need to make a slightly different choice for the polynomials $Q_1,\ldots, Q_{d-\dim V}$. To be more specific, the other irreducible components of $Z(Q_1,\ldots, Q_{d-\dim V})$ might have dimensions larger than expected, making it difficult to apply \cref{thm:barone-basu}. Below we make a definition to identify those problematic components and then introduce another result of Walsh that overcomes this issue.

\begin{definition}
For an irreducible variety $V\subseteq\CC^d$ of dimension $r$ and polynomials $Q_1,\ldots,Q_{d-r}$ such that $V$ is an irreducible component of $Z(Q_1,\ldots,Q_{d-r})$, we define the \emph{envelope} $\cE_V(Q_1,\ldots,Q_{d-r})$ by
\[\cE_V(Q_1,\ldots,Q_{d-r})=\bigcup_{i=1}^{d-r}\cE_V^{(i)}(Q_1,\ldots,Q_{d-r})\]
where $\cE_V^{(i)}(Q_1,\ldots,Q_{d-r})$ is the union of the irreducible components of $Z(Q_1,\ldots,Q_{i})$ of dimension at least $d-i+1$.
\end{definition}

By Krull's theorem, it is clear that $\dim_x^\CC(Z(Q_1,\ldots,Q_{d-r}))=r$ for all $x\in Z(Q_1,\ldots,Q_{d-r})\setminus \cE_V(Q_1,\ldots,Q_{d-r})$. Therefore to make sure that we are in position to apply \cref{thm:barone-basu}, we should manually excise $\cE_V(Q_1,\ldots,Q_{d-r})$ via a polynomial of small degree. This was done in \cite[Section 6]{Wal20}. Expanding the definitions used in \cite[Lemma 6.8]{Wal20} gives the following statement. (Actually it gives something strictly stronger, but we only state the parts that we will use.)

\begin{proposition}[{\cite[Lemma 6.8]{Wal20}}]
\label{thm:envelope-excising}
Given an irreducible variety $V\subseteq\CC^d$ of dimension $j$, there exist polynomials $Q_1,\ldots,Q_{d-j}$ and $F_1,\ldots,F_{d-j-1}$ with the following properties:
\begin{itemize}
    \item $\deg Q_k\lesssim_d \delta_k(V)$ for each $1\leq k\leq d-j$;
    \item $\deg F_k<\delta_k(V)$ for each $1\leq k\leq d-j-1$;
    \item $V\subseteq Z(Q_1,\ldots,Q_{d-j})$;
    \item $F_k$ vanishes identically on each irreducible component of $\cE_V(Q_1,\ldots,Q_{d-j})$ of dimension $d-k$ for each $1\leq k\leq d-j-1$;
    \item $F_k$ does not vanish identically on $V$ for each $1\leq k\leq d-j-1$.
\end{itemize}
\end{proposition}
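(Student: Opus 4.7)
The plan is to construct the $Q_k$'s and $F_k$'s within the inductive framework developed by Walsh in \cite{Wal20}. The $Q_k$'s will be built one at a time, maintaining the invariant that every irreducible component of $Z(Q_1,\ldots,Q_k)$ containing $V$ has dimension exactly $d-k$; the $F_k$'s will then be extracted afterward using the minimality built into the definition of $\delta_k(V)$.

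First I would inductively construct the $Q_k$'s. Assume $Q_1,\ldots,Q_{k-1}$ have been built so that every irreducible component of $Z(Q_1,\ldots,Q_{k-1})$ through $V$ has dimension $d-k+1$. By the definition of $\delta_k(V)$, there is a finite family $g_1,\ldots,g_s$ of polynomials of degree at most $\delta_k(V)$ whose common zero locus contains $V$ and whose components through $V$ have dimension at least $d-k$. Take $Q_k$ to be a sufficiently generic $\CC$-linear combination of the $g_i$; its degree is at most $\delta_k(V)$ since linear combinations do not raise degree, and a Bertini-type genericity argument shows that a generic combination properly cuts each dimension-$(d-k+1)$ component of $Z(Q_1,\ldots,Q_{k-1})$ through $V$ down to dimension $d-k$. (The slack $\lesssim_d$ in the stated degree bound on $Q_k$ absorbs any minor overhead that may arise in formalizing this step.)

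Once all the $Q$'s are built, I would construct the $F_k$'s. Fix $k\in\{1,\ldots,d-j-1\}$ and let $B_1,\ldots,B_m$ be the irreducible components of $\cE_V(Q_1,\ldots,Q_{d-j})$ of dimension $d-k$. Each $B_\ell$ has dimension $d-k>j=\dim V$, and the invariant from the inductive construction forces $V\not\subseteq B_\ell$ for each $\ell$. I want an $F_k$ of degree $<\delta_k(V)$ that vanishes identically on every $B_\ell$ but not on $V$. The strategy is a minimality argument: were no such $F_k$ to exist, then every polynomial of degree less than $\delta_k(V)$ vanishing on $\bigcup_\ell B_\ell$ would also vanish on $V$, which one could combine with the already-chosen $Q_1,\ldots,Q_{k-1}$ to produce a polynomial system witnessing the defining property of $\delta_k(V)$ at a strictly smaller degree, contradicting its minimality.

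The main obstacle is turning this minimality sketch into an actual construction of $F_k$, since one must carefully track how envelope components of different dimensions interact with the polynomial system produced by the definition of $\delta_k(V)$. In Walsh's original approach this is handled by building $Q_k$ and $F_k$ together at each step: $F_k$ is selected as a polynomial of minimal degree excising the $(d-k)$-dimensional envelope components that appear at that step, and the minimality in the definition of $\delta_k(V)$ forces its degree strictly below $\delta_k(V)$. Beyond this delicate step, the remainder of the proof amounts to bookkeeping and standard Bertini-style genericity.
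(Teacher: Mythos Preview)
The paper does not give its own proof of this proposition: it is simply quoted from \cite[Lemma~6.8]{Wal20} as a black box, with the remark that expanding Walsh's definitions yields the stated form. So there is no argument in the paper to compare your proposal against.

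As a sketch of Walsh's proof, your outline is in the right neighborhood but is not self-contained at the step you yourself flag as the main obstacle. The inductive construction of the $Q_k$ via generic linear combinations of the $g_i$ witnessing $\delta_k(V)$ is standard and correct. The difficulty is entirely in producing the $F_k$. Your contradiction sketch (``if every polynomial of degree $<\delta_k(V)$ vanishing on $\bigcup_\ell B_\ell$ also vanishes on $V$, combine with $Q_1,\ldots,Q_{k-1}$ to witness $\delta_k(V)$ at a smaller degree'') does not go through as stated: the $B_\ell$ are not themselves the components through $V$, so knowing that polynomials of small degree cannot separate $V$ from the $B_\ell$ does not directly produce a system of small-degree polynomials whose components through $V$ have dimension at least $d-k$. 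What actually drives Walsh's argument is, as you note in your final paragraph, a simultaneous construction in which at step $k$ one chooses the excising polynomial before fixing $Q_k$, and the strict inequality $\deg F_k<\delta_k(V)$ falls out of the minimality of $\delta_k(V)$ applied to an auxiliary system built at that moment. Your two-phase plan (all $Q$'s first, then all $F$'s) loses access to exactly the information needed to run that minimality argument, which is why you end up deferring to Walsh at the end. In short: the proposal correctly identifies the architecture and the hard point, but does not supply the missing idea, and the paper itself never intended to.
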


We now prove \cref{thm:main-partition} using \cref{thm:real-poly-part} and \cref{thm:envelope-excising}.

\begin{proof}[Proof of \cref{thm:main-partition}]
We construct the desired data in steps numbered $d,d-1,\ldots,0$. At step $j$ we start with a set $\cU_j$ whose elements are pairs $(V,P_V)$ where $V\subseteq\CC^d$ is an irreducible $j$-dimensional variety and $P_V\subseteq V\cap P$. During this step we will add some elements to $\cV_j,\cV_{j+1},\ldots,\cV_d$ and also produce a set $\cU_{j-1}$.

To start, define $\cU_d=\{(\CC^d,P)\}$. We will ensure that before step $j$, we have the partition
\begin{equation}
\label{eq:partition-inductive-statement}
P=\bigsqcup_{(V,P_V)\in\cU_j}P_V\sqcup\bigsqcup_{V\in\cV}\bigsqcup_{C\in\cC_V}P_C.
\end{equation}
Note that this is satisfied before step $d$.

Suppose we are given $\cU_j$ for some $0\leq j\leq d$ and that \cref{eq:partition-inductive-statement} is satisfied. Step $j$ proceeds as follows. For each $(V,P_V)\in\cU_j$, define $j\leq i\leq d$ so that 
\[\delta_1(V)\leq\delta_2(V)\leq\cdots\leq\delta_{d-i}(V)\leq A<\delta_{d-i+1}(V)\leq\cdots\leq\delta_{d-j}(V).\]
We apply \cref{thm:envelope-excising} to $V$ to produce polynomials $Q_1,\ldots,Q_{d-j}$ and $F_1,\ldots,F_{d-j-1}$.

By real polynomial partitioning, \cref{thm:real-poly-part}, there exists a real polynomial $f_{\prt}$ of degree $O_d(A)$ that does not vanish identically on $V$ such that each connected component of $\R^d\setminus Z(f_{\prt})$ contains at most
\[\lesssim_d\frac{\abs{P_V}}{A^{i}\prod_{\ell=1}^{d-i}\delta_\ell(V)}\]
points of $P_V$. Define $f_V=|F_1F_2\cdots F_{d-i-1}|^2f_{\prt}$. We add $V=(Q^{(V)},f_V,P_V)$ to $\cV_i$ where $Q^{(V)}=(Q_1,\ldots,Q_{d-i})$. Note that this is a $(d-i)$-tuple of polynomials; the last $i-j$ polynomials are not included in the data.

By our choice of $i$ and the above properties, we see that each of the polynomials $Q_1,\ldots,Q_{d-i},f_V$ has degree $O_d(A)$. Clearly $f_V$ is a real polynomial. By definition, every irreducible component of $Z(Q_1,\ldots,Q_{d-i})$ of dimension at least $i+1$ is contained in an irreducible component of $\cE_V(Q_1,\ldots,Q_{d-j})$ of dimension at least $i+1$, meaning that $f_V$ vanishes identically on it. Finally, $P_V\subseteq V\subseteq Z(Q_1,\ldots,Q_{d-j})\subseteq Z(Q_1,\ldots,Q_{d-i})$. This shows that $(Q^{(V)},f_V,P_V)$ has the desired properties.

We define $\cC_V$ to be the set of connected components of $Z(Q^{(V)})(\R)\setminus Z(f_V)$ and for each $C\in\cC_V$ we define $P_C=C\cap P_V$. Note that $\{P_C\}_{C\in\cC_V}$ is a partition of $P_V\setminus Z(f_V)$.

Let $W_1,\ldots,W_s$ be the irreducible components of $V\cap Z(f_V)$. Note that neither $f_{\prt}$ nor any of $F_1,\ldots,F_{d-i-1}$ vanish identically on $V$, meaning that $f_V$ does not vanish identically on $V$. Thus by Krull's theorem, each $W_k$ has dimension $j-1$.

We define $P_{W_k}$ so that $P_{W_k}\subseteq W_k\cap P_V$ and so that they partition $P_V\cap Z(f_V)$. This is clearly possible since $P_V\cap Z(f_V)\subset V\cap Z(f_V)=\bigcup_{k=1}^s W_k$. (There may be multiple ways to define these sets if some point lies in multiple of the $W_k$ but we make these choices arbitrarily.) We place $(W_1,P_{W_1}),\ldots,(W_s,P_{W_s})$ into $\cU_{j-1}$. Note that since $\{P_C\}_{C\in\cC_V}$ partitions $P_V\setminus Z(f_V)$ and $\{P_{W_k}\}_{k\in[s]}$ partitions $P_V\cap Z(f_V)$, we see that \cref{eq:partition-inductive-statement} is still satisfied. Doing this procedure for each $(V,\cP_V)\in\cV_j$ completes step $j$.

We run this process for steps $j=d,d-1,\ldots,0$. At the end we have produced the desired data. For $V\in\cV_i$, we chose $f_{\prt}$ so that each connected component of $\R^d\setminus Z(f_{\prt})$ contains at most 
\[\lesssim_d\frac{\abs{P_V}}{A^{i}\prod_{j=1}^{d-i} \delta_j(V)}\lesssim_d\frac{\abs{P_V}}{A^{i}\prod_{j=1}^{d-i}\deg Q_j^{(V)}}\] points of $P_V$. Since each connected components of $Z(Q^{(V)})(\R)\setminus Z(f_V)$ is contained in one of these connected components, we see that \cref{eq:size-of-parts} is satisfied. By \cref{eq:partition-inductive-statement}, we conclude that \cref{eq:partition} is satisfied. Next note that for each $0\leq j\leq d$, the sets $\{P_V\}_{V\in\cU_j}$ are disjoint, implying \cref{eq:total-size-of-point-sets} since $\sum_{V\in\cV}|P_V|\leq\sum_{j=0}^d\sum_{(V,P_V)\in\cU_j}|P_V|\leq (d+1)|P|$. Finally, since $\deg f_V\lesssim_d A$ for each $V$, by B\'ezout's theorem, we see that for each $0\leq j\leq d$,
\[\sum_{(V,P_V)\in\cU_j}\deg V\lesssim_d A^{d-j}.\]
By \cref{thm:inverse-bezout}, we have
\[\deg V\gtrsim_d \delta_1(V)\cdots\delta_{d-j}(V)\gtrsim_d \delta_1(V)\cdots\delta_{d-i}(V)A^{j-i}\gtrsim_d \deg Q_1^{(V)}\cdots\deg Q_{d-i}^{(V)}A^{j-i}\]
for every $(V,P_V)\in \cU_j$ with $i$ satisfying $\delta_{d-i}(V)\leq A<\delta_{d-i+1}(V)$.
Combining the last two inequalities implies \cref{eq:total-deg}, proving the desired result.
\end{proof}

\section{Tur\'an results}
\label{sec:turan}

In this section we prove \cref{thm:turan}. We deduce this from the following stronger result.

\begin{proposition}
\label{thm:generalized-turan}
Let $H=(P_1\sqcup \cdots\sqcup P_k,E)$ be a semialgebraic hypergraph in $\R^d$ with total degree $D$ and at least $\epsilon|P_1|\cdots|P_k|$ edges. Fix $1\leq \ell\leq k$. There exist sets $S_1\subseteq P_1,\ldots, S_{\ell}\subseteq P_{\ell}$ and $T\subseteq P_{\ell+1}\times\cdots\times P_k$ such that $S_1\times\cdots\times S_\ell\times T\subseteq E$ and such that for $1\leq i\leq \ell$,
\[|S_i|=\Omega_{d,k}\paren{(\epsilon/D)^d|P_i|}\]
and
\[|T|\geq \frac{\epsilon}3 |P_{\ell+1}|\cdots|P_k|.\]
\end{proposition}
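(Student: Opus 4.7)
The plan is to apply the oblivious regularity lemma, \cref{thm:main}, to $P_1,\ldots,P_\ell$ and then extract a suitable cell tuple by an averaging argument. First I would set $Q = P_{\ell+1}\times\cdots\times P_k$, and for each $q \in Q$ introduce the $\ell$-partite slice hypergraph $H_q$ on $P_1,\ldots,P_\ell$ whose edges are the tuples $(x_1,\ldots,x_\ell)$ with $(x_1,\ldots,x_\ell,q) \in E$. Substituting the coordinates of $q$ into the defining polynomials of $H$ shows each $H_q$ is a semialgebraic $\ell$-partite hypergraph of total degree at most $D$. Next, I would apply \cref{thm:main} to $P_1,\ldots,P_\ell$ with error parameter $\epsilon/3$ and total degree $D$ to get oblivious partitions $\Pi_1,\ldots,\Pi_\ell$ of size $O_{d,k}((D/\epsilon)^d)$; crucially, by obliviousness these partitions form a homogeneous partition of \emph{every} slice $H_q$ with error at most $\epsilon/3$.

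Summing over $q \in Q$, the number of edges of $H$ lying in cell tuples that are non-homogeneous for their own slice is at most $(\epsilon/3)\prod_{i=1}^k|P_i|$, so at least $(2\epsilon/3)\prod_{i=1}^k|P_i|$ edges lie in cell tuples that are complete for their slice. Defining $T_{\pi_1,\ldots,\pi_\ell} = \{q \in Q : \pi_1\times\cdots\times\pi_\ell\times\{q\} \subseteq E\}$, so that $\pi_1\times\cdots\times\pi_\ell\times T_{\pi_1,\ldots,\pi_\ell}\subseteq E$ automatically, this rewrites as
\[
\sum_{(\pi_1,\ldots,\pi_\ell)\in \Pi_1\times\cdots\times\Pi_\ell} |\pi_1|\cdots|\pi_\ell|\cdot |T_{\pi_1,\ldots,\pi_\ell}| \;\geq\; \frac{2\epsilon}{3}\prod_{i=1}^k|P_i|.
\]
I would then produce a cell tuple $(\pi_1^*,\ldots,\pi_\ell^*)$ simultaneously satisfying $|\pi_i^*| = \Omega_{d,k}((\epsilon/D)^d|P_i|)$ and $|T_{\pi_1^*,\ldots,\pi_\ell^*}|\geq (\epsilon/3)|Q|$ by thresholding: the contribution from cell tuples with some $|\pi_i| < \alpha_i|P_i|$ is at most $\sum_{i=1}^\ell \alpha_i|\Pi_i|\prod_{j=1}^k|P_j|$ (bounding $|T|\leq|Q|$), and the contribution from cell tuples with $|T|<(\epsilon/3)|Q|$ is at most $(\epsilon/3)\prod_{j=1}^k|P_j|$; with thresholds $\alpha_i$ of order $(\epsilon/D)^d$ chosen so that these two estimates together stay below $(2\epsilon/3)\prod_j|P_j|$, a cell tuple meeting both conditions must exist. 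Setting $S_i = \pi_i^*$ and $T = T_{\pi_1^*,\ldots,\pi_\ell^*}$ then yields the proposition.

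The hard part will be calibrating the constants in this thresholding step. Since $|\Pi_i| = O_{d,k}((D/\epsilon)^d)$, taking $\alpha_i$ of the claimed order $(\epsilon/D)^d$ makes $\alpha_i|\Pi_i|$ of $\epsilon$-independent order; together with the $\epsilon/3$ reserved for the small-$|T|$ contribution, this must fit within the total budget of $2\epsilon/3$, leaving almost no slack. My first attempt would be to apply \cref{thm:main} with error $\epsilon/c$ for a sufficiently large constant $c = c(d,k)$ so that the partition sizes and small-coordinate estimates have explicit constants I can control, and then verify these constants can be balanced; if that is insufficient, I would look to exploit finer structure of the multilevel polynomial partitioning (for instance, sharper bounds on how many parts can simultaneously be small) to tighten the small-coordinate estimate beyond the trivial $|T|\leq|Q|$ bound.
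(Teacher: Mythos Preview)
Your slicing/obliviousness idea is sound and actually cleaner than the paper's setup in one respect: partitioning only $P_1,\ldots,P_\ell$ and using obliviousness across the slices $H_q$ is a nice way to get the set $T$ directly. The gap is exactly where you flagged it, and it is not a matter of calibrating constants.

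Your thresholding requires
\[
\sum_{i=1}^{\ell}\alpha_i|\Pi_i|<\frac{\epsilon}{3},
\]
but with $\alpha_i=c_{d,k}(\epsilon/D)^d$ and $|\Pi_i|\leq C_{d,k}(D/\epsilon)^d$ the product $\alpha_i|\Pi_i|$ is an $\epsilon$-independent constant, so the inequality is impossible once $\epsilon$ is small. Replacing the error parameter by $\epsilon/c$ does not help: it multiplies $|\Pi_i|$ by $c^d$ and only changes the available budget by an $O(\epsilon)$ amount, so the obstruction persists. Nor is there finer structural information to exploit; the multilevel partition gives no lower bound on cell sizes, and in the worst case every part of $\Pi_i$ can have size $\Theta((\epsilon/D)^d|P_i|)$. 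As written, your argument only yields $|S_i|=\Omega_{d,k}(\epsilon^{d+1}D^{-d}|P_i|)$, off by a factor of $\epsilon$.

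The paper closes this gap (for $d\geq 2$) by a density-increment induction rather than a single thresholding. Apply the regularity lemma to all of $H$ with error $\epsilon/3$, and let $B_i\subseteq P_i$ be the union of parts of size below $\tfrac{1}{9k^2}|P_i|/|\Pi_i|$. If at most $(\epsilon/3)\prod_j|P_j|$ edges meet some $B_i$, then after removing those and the inhomogeneous edges, a pigeonhole over $(x_1,\ldots,x_\ell)$ gives $T$ of size $\geq(\epsilon/3)\prod_{j>\ell}|P_j|$ and each $S_i$ is the part containing $x_i$, which is not small. Otherwise some $B_i$ carries at least $(\epsilon/3k)\prod_j|P_j|$ edges; restrict $P_i$ to $B_i$ and recurse. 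The new density $\eta$ satisfies $\eta\geq(\epsilon/3k)\cdot|P_i|/|B_i|\geq 3k\epsilon$, while $|B_i|\leq|P_i|/(9k^2)$, and the crucial computation is
\[
\Bigl(\frac{\eta}{D}\Bigr)^d|B_i|\;\geq\;\Bigl(\frac{\epsilon}{D}\Bigr)^d\cdot\frac{(|P_i|/|B_i|)^{d-1}}{(3k)^d}\,|P_i|\;\geq\;\Bigl(\frac{\epsilon}{D}\Bigr)^d|P_i|,
\]
which uses $d\geq 2$. This is the missing idea: the density gain compensates for passing to a smaller vertex set only when $d\geq 2$, and for $d=1$ the paper instead uses the equitable partition of \cref{thm:main-equitable} so that no small parts arise at all.
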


The $\ell=k-1$ of this result immediately implies \cref{thm:turan}. In \cref{sec:zarankiewicz} we will use the $\ell=1$ case in the derivation of \cref{thm:zarankiewicz}.

\begin{proof}
First we handle the $d\geq 2$ case. Fix $d,k,\ell$ and let $C=C(d,k)$ be a constant chosen sufficiently large in terms of the constant in \cref{thm:main}. We will prove by induction on $|P_1|+\cdots+|P_\ell|$ that we can find the desired sets $S_1,\ldots, S_\ell,T$ satisfying
\[|S_i|\geq\frac1{C}\paren{\frac\epsilon D}^d|P_i|\]
for $1\leq i\leq \ell$ and
\[|T|\geq \frac{\epsilon}3 |P_{\ell+1}|\cdots|P_k|.\]

The base case $\abs{P_1}+\cdots +\abs{P_{\ell}}=0$ is trivial. 
For the inductive step, we apply \cref{thm:main} to $H$ with parameter $\varepsilon/3$ to produce partitions $\Pi_1,\ldots, \Pi_k$. For each $1\leq i\leq \ell$, let $B_i$ be the union of parts $\pi\in\Pi_i$ with $\abs{\pi}\leq \frac1{9k^2}\frac{|P_i|}{|\Pi_i|}$, i.e., $B_i$ is the union of the parts that are much smaller than average.

We split into two cases. If at most $(\varepsilon/3)\abs{P_1}\cdots \abs{P_k}$ edges involve a vertex in $B_1\cup \cdots \cup B_{\ell}$, define $H'=(P_1\sqcup\cdots\sqcup P_k,E')$ to be the subhypergraph of $H$ formed by removing those edges along with the edges contained in an inhomogeneous $k$-tuple of parts.
Then $|E'|\geq (\varepsilon/3)\abs{P_1}\cdots \abs{P_k}$. By the pigeonhole principle, there exists some $\ell$-tuple $(x_1,\ldots,x_\ell)\in P_1\times\cdots\times P_\ell$ such that 
\[T=\{(x_{\ell+1},\ldots,x_k)\in P_{\ell+1}\times\cdots\times P_k:(x_1,\ldots,x_k)\in E'\}\]
has size at least $(\epsilon/3)|P_{\ell+1}|\cdots|P_k|$. For $1\leq i\leq \ell$, define $S_i\in\Pi_i$ to be the part of the partition containing $x_i$. We see that $S_1\times\cdots\times S_\ell\times T\subseteq E'\subseteq E$ and for $1\leq i\leq \ell$,
\[|S_i|\geq \frac1{9k^2}\frac{|P_i|}{|\Pi_i|}\gtrsim_{d,k}\paren{\frac\epsilon D}^d|P_i|.\]
Choosing $C$ sufficiently large, this completes the induction in this case.

In the remaining case, there must be some $1\leq i\leq \ell$ such that the number of edges involving $B_i$ is at least $(\varepsilon/3k)\abs{P_1}\cdots \abs{P_k}$. Define $H'=(P_1\sqcup\cdots\sqcup B_i\sqcup\cdots\sqcup P_k,E')$ to be the induced subhypergraph of $H$ on vertex set $P_1\sqcup\cdots\sqcup B_i\sqcup\cdots\sqcup P_k$. Note that
\[|B_i|\leq\frac{1}{9k^2}\frac{|P_i|}{|\Pi_i|}|\Pi_i|=\frac{|P_i|}{9k^2}.\]
Furthermore, the edge density of $H'$ is
\[\eta=\frac{|E'|}{|P_1|\cdots|B_i|\cdots|P_k|}\geq \frac{\frac\epsilon{3k}|P_1|\cdots|P_k|}{|P_1|\cdots|B_i|\cdots|P_k|}= \frac{\varepsilon}{3k}\cdot\frac{|P_i|}{|B_i|}\geq 3k\epsilon>\epsilon.\]
Since $|P_1|+\cdots+|B_i|+\cdots+|P_k|<|P_1|+\cdots+|P_k|$, we can apply the inductive hypothesis to $H'$. This produces $S_1\subseteq P_1,\ldots,S_i\subseteq B_i\subset P_i,\ldots,S_\ell\subseteq P_\ell$ and $T\subseteq P_{\ell+1}\times\cdots\times P_k$ such that $S_1\times\cdots\times S_\ell\times T\subseteq E'\subseteq E$ and such that
\[|S_j|\geq\frac1C\paren{\frac\eta D}^d|P_j|\geq\frac1C\paren{\frac\epsilon D}^d|P_j|\]
for each $1\leq j\leq \ell$ with $j\neq i$. Additionally, 
\[\begin{split}
|S_i|\geq\frac1C\paren{\frac\eta D}^d|B_i|&\geq\frac1C\paren{\frac{\frac{\epsilon}{3k}\cdot\frac{|P_i|}{|B_i|}} D}^d|B_i|=\frac1C\paren{\frac\epsilon D}^d\frac{\paren{\frac{|P_i|}{|B_i|}}^{d-1}}{(3k)^d}|P_i|\\&\geq\frac1C\paren{\frac\epsilon D}^d\frac{(9k^2)^{d-1}}{(3k)^d}|P_i|\geq\frac1C\paren{\frac\epsilon D}^d|P_i|
\end{split}\]
where the last inequality holds since $d\geq 2$. Finally the inductive hypothesis also gives
\[|T|\geq\frac\eta3|P_{\ell+1}|\cdots|P_k|\geq\frac\epsilon3|P_{\ell+1}|\cdots|P_k|.\]
This completes the induction.

Now we prove the result for $d=1$. Let $\Pi_1,\ldots,\Pi_k$ be the equitable partitions formed by applying \cref{thm:main-equitable} with error parameter $\epsilon/2$. Note that since $d=1$, we have $|\Pi_i|=O_k(D/\epsilon)$. Let $H'=(P_1\sqcup\cdots\sqcup P_k,E')$ be the subhypergraph of $H$ formed by removing the edges contained in an inhomogeneous $k$-tuple of parts. Clearly $|E'|\geq (\varepsilon/2)\abs{P_1}\cdots \abs{P_k}$. By the pigeonhole principle, there exists some $\ell$-tuple $(x_1,\ldots,x_\ell)\in P_1\times\cdots\times P_\ell$ such that 
\[T=\{(x_{\ell+1},\ldots,x_k)\in P_{\ell+1}\times\cdots\times P_k:(x_1,\ldots,x_k)\in E'\}\]
has size at least $(\epsilon/2)|P_{\ell+1}|\cdots|P_k|$. For $1\leq i\leq \ell$, define $S_i\in\Pi_i$ to be the part of the partition containing $x_i$. We see that $S_1\times\cdots\times S_\ell\times T\subseteq E'\subseteq E$ and that $S_1,\ldots,S_\ell,T$ are of the appropriate sizes.
\end{proof}

\section{The Zarankiewicz problem}
\label{sec:zarankiewicz}

We prove the graph case ($k=2$) of the Zarankiewicz-type result first. The proof is essentially a specialization of the general proof, but the calculations are significantly simpler so we think it is valuable to read this case first.

The graph case is proved in two steps. First we deduce a weak Zarankiewicz result from our Tur\'an result, \cref{thm:turan}. Then we upgrade the weak Zarankiewicz result to the strong one via multilevel polynomial partitioning, \cref{thm:main-partition}. These are the same two steps used by Fox, Pach, Sheffer, Suk, and Zahl in the proof of their semialgebraic Zarankiewicz result. They prove a similar weak Zarankiewicz result \cite[Theorem 2.1]{FPSSZ17} using VC-dimension techniques and then upgrade it using constant-degree polynomial partitioning. Note in contrast, both halves of our proof use only polynomial partitioning and we avoid the $o(1)$-loss by using polynomial partitioning using polynomials of fairly high degree.

One should compare the following result to the K\H{o}v\'ari--S\'os--Tur\'an theorem which gives the bound of $O_u(N^{2-1/u})$ edges. Here we improve this to $O_{u,d}(N^{2-1/d})$ which is typically much smaller.

\begin{corollary}
\label{thm:zarankiewicz-weak}
Let $G=(P\sqcup Q,E)$ be a semialgebraic graph in $\R^d$ with total degree $D$. If $G$ is $K_{u,u}$-free, then the number of edges of $G$ is at most
\[|E|=O_{d}\paren{u^{1/d}D|P||Q|^{1-1/d}+u|Q|}.\]
\end{corollary}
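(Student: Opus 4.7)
The plan is to deduce this weak Zarankiewicz bound directly from the Tur\'an-type statement \cref{thm:generalized-turan} (equivalently, \cref{thm:turan} specialized to $k=2$). The key is to notice that the two sides of the Tur\'an conclusion are asymmetric: one subset has size $\Omega_d((\epsilon/D)^d |P_1|)$ while the other has size only $\Omega(\epsilon|P_2|)$. Since the bound we want is likewise asymmetric in $P$ and $Q$, we must apply \cref{thm:generalized-turan} with the \emph{right} assignment, namely $P_1=Q$ and $P_2=P$, so that the ``good'' side of the Tur\'an conclusion lands in $Q$.

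Concretely, I would set $\epsilon = |E|/(|P|\,|Q|)$ and apply \cref{thm:generalized-turan} to $G$ with $k=2$, $\ell=1$, in the orientation $(P_1,P_2)=(Q,P)$. This produces $S_1\subseteq Q$ and $S_2\subseteq P$ with $S_2\times S_1\subseteq E$ (after switching coordinates back) and
\[|S_1|\gtrsim_d (\epsilon/D)^d|Q|,\qquad |S_2|\gtrsim \epsilon|P|.\]
Because $G$ is $K_{u,u}$-free, a complete bipartite pair $(S_2,S_1)\subseteq E$ forces $\min(|S_1|,|S_2|)<u$. Splitting into the two cases then gives two possibilities:
\[(\epsilon/D)^d|Q|\lesssim_d u\qquad\text{or}\qquad \epsilon|P|\lesssim u.\]
Rearranging, either $\epsilon\lesssim_d D(u/|Q|)^{1/d}$, which yields $|E|\lesssim_d D u^{1/d}|P|\,|Q|^{1-1/d}$, or $\epsilon|P|\lesssim u$, which yields $|E|\lesssim u|Q|$. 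Adding the two bounds gives the claimed result.

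There is essentially no obstacle beyond choosing the correct orientation of the two vertex classes when invoking \cref{thm:generalized-turan}. One small verification is that $\epsilon$ is at least the threshold implicit in \cref{thm:generalized-turan}: this is trivial since if $\epsilon$ is tiny enough that the Tur\'an statement is vacuous, the desired edge bound follows immediately from the $u|Q|$ term (or holds for trivial reasons). The rest is the two-line case analysis above, so the ``proof'' is really just a clean reformulation of \cref{thm:turan} with the asymmetric Tur\'an-exponent playing the role that VC-dimension arguments play in \cite[Theorem 2.1]{FPSSZ17}.
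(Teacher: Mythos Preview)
Your proposal is correct and is essentially identical to the paper's own proof: the paper likewise sets $\epsilon=|E|/(|P||Q|)$, applies \cref{thm:turan} in the orientation that places the $(\epsilon/D)^d$ bound on the $Q$-side, and then uses $K_{u,u}$-freeness to force one of the two resulting inequalities, rearranging each into one of the two terms in the stated bound. The only superfluous remark is your worry about an ``$\epsilon$-threshold'' in \cref{thm:generalized-turan}; that result has no such threshold, so no verification is needed.
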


\begin{proof}
Let $|E|=\epsilon|P||Q|$. By \cref{thm:turan}, $G$ contains $T\times S$ with 
\[|S|\gtrsim_{d}(\epsilon/D)^d|Q|\qquad\text{and}\qquad |T|\gtrsim\epsilon|P|.\]
Since $G$ is $K_{u,u}$-free we conclude that one of these sets is smaller than $u$. Thus either
\[(\epsilon/D)^d|Q|\lesssim_du\qquad\text{or}\qquad\epsilon|P|\lesssim u.\]
These inequalities rearrange to
\[|E|=\epsilon|P||Q|\lesssim_d{D\paren{\frac{u}{|Q|}}^{1/d}|P||Q|}\qquad\text{or}\qquad|E|\lesssim u|Q|.\qedhere\]
\end{proof}

\begin{theorem}
\label{thm:zarankiewicz-2-unif}
Let $G=(P\sqcup Q,E)$ be a semialgebraic graph in $\R^d$ with total degree $D$. If $G$ is $K_{u,u}$-free, then the number of edges of $G$ is at most
\[|E|=O_{d}\paren{u^{\frac2{d+1}}D^{\frac{2d}{d+1}}(|P||Q|)^{\frac{d}{d+1}}+u|P|+u|Q|}.\]
\end{theorem}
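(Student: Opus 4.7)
The plan is to follow the two-step strategy of Fox, Pach, Sheffer, Suk, and Zahl \cite{FPSSZ17}: upgrade the weak Zarankiewicz bound of \cref{thm:zarankiewicz-weak} to the strong one via polynomial partitioning. The crucial improvement over \cite{FPSSZ17} is that we apply a single round of multilevel polynomial partitioning (\cref{thm:main-partition}) with a polynomial of degree $A$ chosen in terms of $u,D,|P|,|Q|$, rather than iterating constant-degree partitioning; this is what removes the $o(1)$-loss.

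Concretely, apply \cref{thm:main-partition} to $P$ with parameter $A\geq 1$ to obtain $O_d(A^d)$ cells, each of size at most $\lesssim_d|P|/A^d$. For each $q\in Q$, its neighborhood $N(q)\subseteq P$ is cut out by polynomials in the $p$-variables of total degree at most $D$; a Milnor--Thom argument analogous to \cref{thm:number-of-bad-tuples} (specialized to the bipartite case with a trivial partition on the $Q$-side) shows that the boundary $\partial N(q)$ meets at most $O_d(DA^{d-1})$ cells. Classify each edge $(p,q)$ with $p\in P_C$ as \emph{good} if $C\subseteq N(q)$ and as \emph{crossing} if $C$ is cut by $\partial N(q)$.

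The $K_{u,u}$-free hypothesis controls the good edges: any cell $C$ with $|P_C|\geq u$ has at most $u-1$ fully-connected $q$'s, so the good-edge contribution from such cells totals at most $u|P|$. In each cell $C$ the crossing edges form a $K_{u,u}$-free bipartite graph on $P_C$ and $\cQ_C:=\{q\in Q:\partial N(q)\text{ cuts }C\}$, so applying \cref{thm:zarankiewicz-weak} cell-by-cell and summing with H\"older's inequality---using the estimates $\sum_C |P_C|^d\leq (|P|/A^d)^{d-1}|P|$ and $\sum_C |\cQ_C|\lesssim_d |Q|DA^{d-1}$---bounds the total crossing edges by
\[\lesssim_d u^{1/d}D^{(2d-1)/d}|P||Q|^{(d-1)/d}A^{-(d-1)/d}+uD|Q|A^{d-1}.\]
Setting $A$ so that $A^{d+1}\asymp D|P|^{d/(d-1)}/(u|Q|^{1/(d-1)})$ balances these two terms, and each becomes $O_d\bigl(u^{2/(d+1)}D^{2d/(d+1)}(|P||Q|)^{d/(d+1)}\bigr)$, the desired main term.

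The main obstacle is handling cells with $|P_C|<u$, where the ``at most $u-1$ fully-connected $q$'s'' bound degrades to the trivial $|Q|$. Since the total number of points in such cells is at most $u\cdot O_d(A^d)$, I expect to handle their contribution by one of two approaches: (a) a recursive application of the theorem to the small-cell sub-instance, valid when $uA^d<|P|$, which holds in the relevant regime $|P|\gtrsim uD^d$; or (b) observing that for $|P|\lesssim uD^d$ the weak bound \cref{thm:zarankiewicz-weak} is already tighter than the strong bound up to the additive $u|P|+u|Q|$. Either way, the $u|P|+u|Q|$ term in the theorem statement absorbs the degenerate contributions.
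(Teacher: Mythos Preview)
Your overall strategy is the same as the paper's: partition $P$ via \cref{thm:main-partition}, split the edges at each $q$ into those from cells entirely inside $N(q)$ and those from cells crossed by $\partial N(q)$, bound the crossing edges with \cref{thm:zarankiewicz-weak} plus H\"older, and balance by choosing $A$. Your value of $A$ and the resulting main term are correct.

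The genuine gap is the claim ``each of size at most $\lesssim_d|P|/A^d$''. \cref{thm:main-partition} does not give this: for $V\in\cV_i$ and $C\in\cC_V$ it only yields $|P_C|\lesssim_d |P_V|\big/\bigl(A^{i}\prod_{j}\deg Q_j^{(V)}\bigr)$, and the denominator can be as small as $A$ (if $P$ lies on a line, the relevant cells appear at level $i=1$ with all $\deg Q_j^{(V)}\asymp 1$, so $|P_C|\lesssim|P|/A$). Your bound $\sum_C|P_C|^d\le (|P|/A^d)^{d-1}|P|$ then fails, and pairing the correct global estimate $\sum_C|\cQ_C|\lesssim_d|Q|DA^{d-1}$ with a single global H\"older produces an unwanted positive power of $A$. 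The fix, which the paper carries out, is to apply H\"older separately within each $\cC_V$: there \cref{thm:number-of-bad-tuples} gives $\sum_{C\in\cC_V}|\cQ_C|\lesssim_d |Q|DA^{i-1}\prod_j\deg Q_j^{(V)}$, and this factor cancels exactly against the cell-size denominator, giving $u^{1/d}D^{2-1/d}|P_V|(|Q|/A)^{1-1/d}$ for each $V$; summing over $V$ via \cref{eq:total-size-of-point-sets} recovers your claimed crossing bound. Two smaller remarks: the small-cell contribution is handled more simply than either of your proposals by applying \cref{thm:zarankiewicz-weak} (with the sides swapped) to $E[P'\times Q]$ for $P'=\bigcup_{|P_C|<u}P_C$, using $|P'|\lesssim_d uA^d$ so that the main term becomes $uD|Q|A^{d-1}$, already present; and the regime in which your optimal $A$ drops below $1$ is exactly $u^{d-1}|Q|>D^{d-1}|P|^d$ (not $|P|\lesssim uD^d$), where \cref{thm:zarankiewicz-weak} alone already suffices.
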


\begin{proof}
We first deal with the case that $u^{d-1}|Q|> D^{d-1}|P|^d$. 
In this case, \cref{thm:zarankiewicz-weak} gives the bound
\[|E|\lesssim_d u^{1/d}D |P||Q|^{1-1/d}+u|Q|.\]
We will upper bound the first term by $u^{\frac2{d+1}}D(|P||Q|)^{\frac{d}{d+1}}$, which suffices as $1\leq \frac{2d}{d+1}$.
To do so, simply note that
\[u^{\frac1d}D |P||Q|^{1-\frac1d}=u^{\frac1d}D\left(\frac{\abs{P}^d}{\abs{Q}}\right)^{\frac{1}{d(d+1)}}(|P||Q|)^{\frac{d}{d+1}}<u^{\frac{2}{d+1}}D(|P||Q|)^{\frac{d}{d+1}}\]
as $\abs{P}^d/\abs{Q}<u^{d-1}/D^{d-1}\leq u^{d-1}$ in this case.

Therefore we may now assume that $u^{d-1}\abs{Q}\leq D^{d-1}\abs{P}^d$.
We apply multilevel polynomial partitioning, \cref{thm:main-partition}, to $P\subset\R^d$ with parameter \[A=u^{-1/(d+1)}D^{1/(d+1)}|P|^{d/(d^2-1)}|Q|^{-1/(d^2-1)}.\] Note that by assumption $A\geq 1$.

For each $q\in Q$, consider the semialgebraic neighborhood set
\[N_q=\{x\in\R^d:\Phi(\sgn(g_1(x,q)),\ldots,\sgn(g_t(x,q)))=1\}.\]
For $p\in P$ we have $(p,q)\in E$ if and only if $p\in N_q$.

For each cell $C\in\cC$ we have a set $P_C$ produced by multilevel polynomial partitioning. Now for each cell, define a partition
\[Q=Q_{C,\mathsf{none}}\sqcup Q_{C,\mathsf{some}}\sqcup Q_{C,\mathsf{all}}\]
where $Q_{C,\mathsf{none}}=\{q\in Q:N_q\cap C=\emptyset\}$ and $Q_{C,\mathsf{all}}=\{q\in Q:N_q\cap C=C\}$ and the remaining vertices lie in $Q_{C,\mathsf{some}}$. Note in particular that $E[P_C\times Q_{C,\mathsf{none}}]$ is empty and $E[P_C\times Q_{C,\mathsf{all}}]$ is complete. Thus we can write
\[|E|=\sum_{C\in\cC}|E[P_C\times Q_{C,\mathsf{some}}]|+|E[P_C\times Q_{C,\mathsf{all}}]|.\]

We bound the first sum using \cref{thm:zarankiewicz-weak}. For $V\in\cV$, define $i_V(A)$ so that $V\in\cV_{i_V(A)}$. Now
\begin{align*}
\sum_{C\in\cC}|E&[P_C\times Q_{C,\mathsf{some}}]|\\
&=\sum_{V\in\cV}\sum_{C\in\cC_V}|E[P_C\times Q_{C,\mathsf{some}}]|\\
&\lesssim_{d}\sum_{V\in\cV}\sum_{C\in\cC_V}u^{1/d}D|P_C||Q_{C,\mathsf{some}}|^{1-1/d}+u|Q_{C,\mathsf{some}}|\\
&\lesssim_{d}\sum_{V\in\cV}\left(u^{1/d}D\paren{\frac{|P_V|}{A^{i_V(A)}\prod_{j=1}^{d-i_V(A)}\deg Q_j^{(V)}}}^{1-1/d}\sum_{C\in\cC_V}|P_C|^{1/d}|Q_{C,\mathsf{some}}|^{1-1/d}\right.\\&\qquad\qquad\qquad\qquad\left.+u\sum_{C\in\cC_V}|Q_{C,\mathsf{some}}|\right)\tag*{\tiny [by \cref{eq:size-of-parts}]}\\
&\leq\sum_{V\in\cV}\paren{u^{1/d}D|P_V|\paren{\frac{\sum_{C\in\cC_V}|Q_{C,\mathsf{some}}|}{A^{i_V(A)}\prod_{j=1}^{d-i_V(A)}\deg Q_j^{(V)}}}^{1-1/d}+u\sum_{C\in\cC_V}|Q_{C,\mathsf{some}}|}\tag*{\tiny [H\"older's inequality]}\\
\end{align*}

For $q\in Q$ and $V\in\cV$ we count the number of cells $C\in\cC_V$ such that $q\in Q_{C,\mathsf{some}}$. If $N_q$ is neither entirely contained in nor entirely disjoint from $C$, we see that one of $\sgn(g_1(x,q)),\ldots,\sgn(g_t(x,q))$ is not constant on $C$. Thus by \cref{thm:number-of-bad-tuples} (applied with $k=1$ and $g(x)=g_r(x,q)$), we see that $q\in Q_{C,\mathsf{some}}$ for at most $\lesssim_{d} DA^{i_V(A)-1}\prod_{j=1}^{d-i_V(A)}\deg Q_j^{(V)}$ choices of $C\in\cC_V$. Summing over $q\in Q$ we conclude that
\[\sum_{C\in\cC_V}|Q_{C,\mathsf{some}}|\lesssim_{d}|Q|DA^{i_V(A)-1}\prod_{j=1}^{d-i_V(A)}\deg Q_j^{(V)}.\]
Combining this with the above inequalities gives
\begin{align*}
\sum_{C\in\cC}|E[P_C\times Q_{C,\mathsf{some}}]|
&\lesssim_{d}\sum_{V\in\cV}\left(u^{1/d}D|P_V|\paren{\frac{|Q|DA^{i_V(A)-1}\prod_{j=1}^{d-i_V(A)}\deg Q_j^{(V)}}{A^{i_V(A)}\prod_{j=1}^{d-i_V(A)}\deg Q_j^{(V)}}}^{1-1/d}\right.\\&\qquad\qquad\qquad\qquad\left.+u|Q|DA^{i_V(A)-1}\prod_{j=1}^{d-i_V(A)}\deg Q_j^{(V)}\right)\\
&\lesssim_{d}\sum_{V\in\cV}\left(u^{1/d}D^{2-1/d}|P_V|\paren{\frac{|Q|}{A}}^{1-1/d}+u|Q|DA^{i_V(A)-1}\prod_{j=1}^{d-i_V(A)}\deg Q_j^{(V)}\right)\\
&\lesssim_{d} u^{1/d}D^{2-1/d}|P|\paren{\frac{|Q|}{A}}^{1-1/d}+u|Q|DA^{d-1}.\tag*{\tiny [by \cref{eq:total-size-of-point-sets} and \cref{eq:total-deg}]}
\end{align*}
By our choice of $A$, both terms are $u^{2/(d+1)}D^{2d/(d+1)}|P|^{d/(d+1)}|Q|^{d/(d+1)}$.

Now we bound the second sum. Note that since $G$ is $K_{u,u}$-free, we have either $|P_C|<u$ or $|Q_{C,\mathsf{all}}|<u$ for each $C\in\cC$. We then have the easy bound
\begin{align*}
\sum_{C\in\cC:|P_C|\geq u}|E[P_C\times Q_{C,\mathsf{all}}]|
&<\sum_{C\in\cC:|P_C|\geq u}u|P_C|\lesssim_d u|P|.
\end{align*}
To finish, we define
\[P'=\bigcup_{C\in\cC:|P_C|<u}P_C\]
so that
\[\sum_{C\in\cC:|P_C|<u}|E[P_C\times Q_{C,\mathsf{all}}]|\leq |E[P'\times Q]|\lesssim_d u^{1/d}D|P'|^{1-1/d}|Q|+u|P'|\]
by \cref{thm:zarankiewicz-weak}.

We have the bound $|P'|<u|\cC|\lesssim_d uA^d$ by \cref{thm:number-of-cells} together with \cref{eq:total-deg}. Using this bound and the trivial bound $|P'|\leq |P|$, we see that the second sum is bounded by
\[O_d\paren{u^{1/d}D(uA^d)^{1-1/d}|Q|+u|P|}.\]
As the first term simplifies to $u\abs{Q}DA^{d-1}$, which has already appeared, the proof is complete.
\end{proof}

Now we give the general proof. The proof is by induction on the uniformity $k$. Given the result for $(k-1)$-uniform hypergraphs, we bootstrap it to a weak Zarankiewicz bound for $k$-uniform hypergraphs. This upgrading procedure uses our Tur\'an result, specifically \cref{thm:generalized-turan} for $\ell=1$. Then we upgrade the weak bound to the full theorem via multilevel polynomial partitioning. The following is the full Zarankiewicz result.

\begin{theorem}
\label{thm:zarankiewicz-full}
Let $H=(P_1\sqcup \cdots\sqcup P_k,E)$ be a semialgebraic hypergraph in $\R^d$ with total degree $D$. If $H$ is $K^{(k)}_{u,\ldots,u}$-free, then the number of edges of $H$ is at most
\[|E|=O_{d,k}\paren{\abs{P_1}\cdots \abs{P_k}\sum_{s=1}^{k}u^{\frac{s+(\beta_s+(k-s)\alpha_s)d}{(s-1)d+1}}D^{\frac{\alpha_sd}{(s-1)d+1}}\sum_{I\in \binom{[k]}{s}}\left(\prod_{i\in I}\abs{P_i}\right)^{-\frac{1}{(s-1)d+1}}}\]
where
\[\alpha_s = \frac{(s-1)(s+2)}{2}\qquad\text{and}\qquad \beta_s=\frac{(s-2)(s-1)(s+3)}{6}.\]
\end{theorem}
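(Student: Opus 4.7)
The plan is to prove the theorem by induction on the uniformity $k$, with base case $k=2$ given by \cref{thm:zarankiewicz-2-unif}. For the inductive step we mirror the two-step structure of the $k=2$ proof: first establish a weak Zarankiewicz-type bound that is linear in $|P_1|$ by combining our Tur\'an result with the inductive hypothesis, then bootstrap to the full bound via multilevel polynomial partitioning (\cref{thm:main-partition}) applied to $P_1$.

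\textbf{Weak bound.} Write $|E|=\epsilon|P_1|\cdots|P_k|$ and apply \cref{thm:generalized-turan} with $\ell=1$ to produce $S_1\subseteq P_1$ and $T\subseteq P_2\times\cdots\times P_k$ with $S_1\times T\subseteq E$, $|S_1|\gtrsim_{d,k}(\epsilon/D)^d|P_1|$, and $|T|\geq(\epsilon/3)|P_2|\cdots|P_k|$. If $|S_1|<u$, we obtain directly $\epsilon\lesssim_{d,k} D(u/|P_1|)^{1/d}$. Otherwise, pick any $u$ points $v_1,\ldots,v_u\in S_1$ and consider the $(k-1)$-uniform hypergraph $H'$ on $P_2\sqcup\cdots\sqcup P_k$ with edge set $T'=\{(x_2,\ldots,x_k):(v_i,x_2,\ldots,x_k)\in E\text{ for all }i\in[u]\}$. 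Then $T\subseteq T'$; $H'$ is semialgebraic of total degree at most $uD$ (defined by the $u$ substitutions of the $g_r$'s combined by the conjunction of $\Phi$); and $H'$ is $K_{u,\ldots,u}^{(k-1)}$-free, since a $K_{u,\ldots,u}^{(k-1)}$ in $H'$ together with $\{v_1,\ldots,v_u\}$ would produce a $K_{u,\ldots,u}^{(k)}$ in $H$. The inductive hypothesis applied to $H'$ thus bounds $|T'|$ and hence $\epsilon$. Combining the two cases yields a weak bound of shape $|E|\lesssim_{d,k}|P_1|\cdot W(|P_2|,\ldots,|P_k|;u,D)$ whose terms match the $(k-1)$-uniform formula with $D$ replaced by $uD$.

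\textbf{Strong bound.} Apply \cref{thm:main-partition} to $P_1\subset\R^d$ with a parameter $A\geq1$ to be optimized. For each cell $C\in\cC$ partition $P_2\times\cdots\times P_k$ into $T_{C,\mathsf{none}}\sqcup T_{C,\mathsf{some}}\sqcup T_{C,\mathsf{all}}$ according to whether the semialgebraic neighborhood $N_{(x_2,\ldots,x_k)}=\{y\in\R^d:\Phi(\sgn g_1(y,x_2,\ldots,x_k),\ldots)=1\}$ meets $C$ trivially, partially, or entirely. Then
\[|E|=\sum_{C\in\cC}|E[P_{1,C}\times T_{C,\mathsf{some}}]|+\sum_{C\in\cC}|E[P_{1,C}\times T_{C,\mathsf{all}}]|.\]
The ``some'' sum is handled exactly as in the $k=2$ case: apply the weak Zarankiewicz bound to the sub-hypergraph on each cell (still semialgebraic of total degree $D$ and $K_{u,\ldots,u}^{(k)}$-free), combine over $C\in\cC_V$ by H\"older's inequality using \cref{eq:size-of-parts}, control $\sum_{C\in\cC_V}|T_{C,\mathsf{some}}|$ via \cref{thm:number-of-bad-tuples} applied with $k=1$ to each $g_r$, and finally sum over $V$ using \cref{eq:total-size-of-point-sets,eq:total-deg}. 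For the ``all'' sum, split by $|P_{1,C}|\geq u$ versus $|P_{1,C}|<u$. In the first case, pick $u$ points $v_1,\ldots,v_u\in P_{1,C}$ and form the common-link hypergraph $L_C$ on $P_2\sqcup\cdots\sqcup P_k$; as in the weak step, $L_C\supseteq T_{C,\mathsf{all}}$, $L_C$ has total degree at most $uD$, and $L_C$ is $K_{u,\ldots,u}^{(k-1)}$-free, so the inductive hypothesis bounds $|L_C|$, and $\sum_C|P_{1,C}|\cdot|L_C|\leq|P_1|\cdot\max_C|L_C|$. In the second case, $P_1'=\bigcup_{|P_{1,C}|<u}P_{1,C}$ has size at most $\min(|P_1|,u|\cC|)\lesssim_d\min(|P_1|,uA^d)$ by \cref{thm:number-of-cells} and \cref{eq:total-deg}, and the weak Zarankiewicz bound applied to $(P_1',P_2,\ldots,P_k)$ closes the estimate.

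\textbf{Main difficulty.} The principal obstacle is bookkeeping the exponents $\alpha_s$ and $\beta_s$ through the recursion. Each inductive application trades one factor of $D$ for a factor of $u$ (because the common-link construction inflates the total degree from $D$ to $uD$), and these trades accumulate cubically in $s$, producing the formula $\beta_s=(s-2)(s-1)(s+3)/6$ as the recursion unwinds from the innermost level. The parameter $A$ at each level of the strong-bound step must be chosen to balance the ``some'' and ``all'' contributions, and one must verify that the $s$-th term of the claimed bound corresponds precisely to the scenario where the cascade of Tur\'an-plus-partitioning arguments terminates after $s$ steps.
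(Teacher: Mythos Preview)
Your weak bound step is correct and matches the paper's, but the strong bound step has a structural gap. You apply multilevel polynomial partitioning to $P_1$ alone and then split $P_2\times\cdots\times P_k$ into $T_{C,\mathsf{none}}\sqcup T_{C,\mathsf{some}}\sqcup T_{C,\mathsf{all}}$. In the $k=2$ case this is fine, because $T_{C,\mathsf{some}}$ is then a subset of the \emph{single} part $Q$, and the weak bound (being sublinear in $|Q|$) can be applied to $P_C\times Q_{C,\mathsf{some}}$. For $k\geq 3$, however, your $T_{C,\mathsf{some}}$ is a subset of the \emph{product} $P_2\times\cdots\times P_k$, not of any individual $P_i$; the weak bound requires a $k$-partite hypergraph, and there is no way to substitute $|T_{C,\mathsf{some}}|$ for $|P_2|\cdots|P_k|$ in it. If instead you use the trivial estimate $|E[P_{1,C}\times T_{C,\mathsf{some}}]|\leq |P_{1,C}|\cdot|T_{C,\mathsf{some}}|$ together with \cref{thm:number-of-bad-tuples}, the ``some'' contribution becomes $\lesssim_{d,k}|P_1|\cdots|P_k|D/A$, while the ``all, $|P_{1,C}|<u$'' contribution is $\lesssim_{d,k} uDA^{d-1}|P_2|\cdots|P_k|$; balancing these at $A=(|P_1|/u)^{1/d}$ merely reproduces the weak bound term $u^{1/d}D|P_1|^{1-1/d}|P_2|\cdots|P_k|$, so nothing is gained and the target terms with $1\in I$ (in particular the leading $I=[k]$ term) never appear.

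The paper avoids this by partitioning \emph{all} of $P_1,\ldots,P_{k-1}$ simultaneously. For each tuple of cells $(C_1,\ldots,C_{k-1})$ the set $P_{C,\mathsf{some}}$ is a subset of the single remaining part $P_k$, so one can legitimately apply the weak bound to $(P_1)_{C_1}\times\cdots\times(P_{k-1})_{C_{k-1}}\times P_{C,\mathsf{some}}$; its main term $E_0(C)$ is sublinear in $|P_{C,\mathsf{some}}|$ and its secondary terms $E_I(C)$ are sublinear in $\prod_{i\in I}|(P_i)_{C_i}|$, both of which are small after partitioning. H\"older then combines these, and balancing $F(A)$ against the various $G_I(A)$ produces exactly the $I\cup\{k\}$ terms of the claimed bound. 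Your proposal would be correct if you replace ``partition $P_1$'' by ``partition $P_1,\ldots,P_{k-1}$'' throughout the strong bound step and redo the bookkeeping accordingly.
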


\begin{proof}
We will prove the statement by induction on $k$. When $k=1$, the result is trivial since a $K_u^{(1)}$-free 1-uniform hypergraph is exactly one satisfying $|E|<u$. Now assume that $k\geq 2$ and that the statement holds for $k-1$.

We will first prove a weak Zarankiewicz result from the inductive hypothesis. Suppose that there are $\varepsilon \abs{P_1}\cdots \abs{P_k}$ edges in $E$. By \cref{thm:generalized-turan} applied with $\ell=1$, there exists a set $S_k\subseteq P_k$ with size at least $\gtrsim_{d,k}(\varepsilon/D)^d\abs{P_k}$ and a set $T\subseteq P_1\times\cdots\times P_{k-1}$ with size at least $(\varepsilon/3) \abs{P_1}\cdots\abs{P_{k-1}}$ such that $T\times S_k\subseteq E$. 

If $\abs{S_k}<u$, then $u>|S_k|\gtrsim_{d,k}(\epsilon/D)^d|P_k|$, implying $\varepsilon \lesssim_{d,k} u^{1/d}D\abs{P_k}^{-1/d}$.
Now if $\abs{S_k}\geq u$, choose $u$ distinct elements $q_1,\ldots,q_u\in S_k$ arbitrarily and define the following $(k-1)$-uniform semialgebraic hypergraph $(P_1\sqcup\cdots\sqcup P_{k-1}, T')$. For each $q\in P_k$ define the semialgebraic neighborhood $N_q\subseteq (\R^d)^{k-1}$ to be
\[N_q=\{x\in (\R^d)^{k-1}:\Phi(\sgn(g_1(x,q)),\ldots, \sgn(g_t(x,q)))=1\}.\]
Then let $T'$ be the set of $(k-1)$-tuples in $P_1\times\cdots\times P_{k-1}$ that lie in $N_{q_1}\cap \cdots\cap N_{q_u}$.
It is clear that $T'$ is the edge set of a $(k-1)$-uniform semialgebraic hypergraph in $\R^d$ with total degree at most $uD$.
We also know that $T\subseteq T'$.
Finally, since $T'$ is the common neighborhood of $q_1,\ldots,q_u$ we know that $T'$ is $K^{(k-1)}_{u,\ldots,u}$-free.
Therefore by the inductive hypothesis,
\[|T|\leq \abs{T'}\lesssim_{d,k}\abs{P_1}\cdots \abs{P_{k-1}}\sum_{s=1}^{k-1}u^{\frac{s+(\beta_s+(k-1-s)\alpha_s)d}{(s-1)d+1}}(uD)^{\frac{\alpha_sd}{(s-1)d+1}}\sum_{I\in \binom{[k-1]}{s}}\left(\prod_{i\in I}\abs{P_i}\right)^{-\frac{1}{(s-1)d+1}}.\]
As a consequence,
\[\varepsilon\lesssim_{d,k}\sum_{s=1}^{k-1}u^{\frac{s+(\beta_s+(k-s)\alpha_s)d}{(s-1)d+1}}D^{\frac{\alpha_sd}{(s-1)d+1}}\sum_{I\in \binom{[k-1]}{s}}\left(\prod_{i\in I}\abs{P_i}\right)^{-\frac{1}{(s-1)d+1}}.\]
Combining the cases where $\abs{S_k}\geq u$ and where $\abs{S_k}<u$, we deduce that
\begin{equation}
\label{eq:weak-zarankiewicz-full}
\begin{split}
|E|\lesssim_{d,k}u^{1/d}D&\abs{P_1}\cdots \abs{P_{k-1}}\abs{P_k}^{1-1/d}\\&+\abs{P_1}\cdots \abs{P_k}\sum_{s=1}^{k-1}u^{\frac{s+(\beta_s+(k-s)\alpha_s)d}{(s-1)d+1}}D^{\frac{\alpha_sd}{(s-1)d+1}}\sum_{I\in \binom{[k-1]}{s}}\left(\prod_{i\in I}\abs{P_i}\right)^{-\frac{1}{(s-1)d+1}}.
\end{split}
\end{equation}
Note that this bound is too weak to deduce the desired result because the first term is potentially too large. However, the second term is smaller than the desired bound.

Now we upgrade this weak Zarankiewicz result to prove the desired bound for uniformity $k$. Apply multilevel polynomial partitioning, \cref{thm:main-partition}, to $P_1,\ldots,P_{k-1}$ with some parameter $A\geq 1$ that will be specified later. This produces data $\cV^{(1)},\ldots,\cV^{(k-1)}$.
 
For $V_1\in\cV^{(1)},\ldots, V_{k-1}\in\cV^{(k-1)}$ and each tuple of cells $C\in\cC_{V_1}\times\cdots\times\cC_{V_{k-1}}$, define the partition
\[P_k=P_{C,\mathsf{none}}\sqcup P_{C,\mathsf{some}}\sqcup P_{C,\mathsf{all}}\]
as before: $P_{C,\mathsf{none}}$ consists of points $q\in P_k$ with $N_q\cap C=\emptyset$ and $P_{C,\mathsf{all}}$ consists of points $q\in P_k$ with $N_q\cap C = C$ and $P_{C,\mathsf{some}}$ consists of the remaining points in $P_k$.
We get
\[|E|=\sum_C|E[(P_1)_{C_1}\times\cdots\times (P_{k-1})_{C_{k-1}}\times P_{C,\mathsf{some}}]|+|E[(P_1)_{C_1}\times\cdots\times (P_{k-1})_{C_{k-1}}\times P_{C,\mathsf{all}}]|.\]

To deal with the first term we apply the weak Zarankiewicz result. This gives the upper bound
\[|E[(P_1)_{C_1}\times\cdots\times (P_{k-1})_{C_{k-1}}\times P_{C,\mathsf{some}}]|\lesssim_{d,k} E_0(C)+\sum_{\emptyset\neq I\subseteq [k-1]}E_I(C)\]
where we define
\[E_0(C)=u^{1/d}D\abs{(P_1)_{C_1}}\cdots \abs{(P_{k-1})_{C_{k-1}}}\abs{P_{C,\mathsf{some}}}^{1-1/d}\]
and
\[E_I(C)=\abs{(P_1)_{C_1}}\cdots \abs{(P_{k-1})_{C_{k-1}}}\abs{P_{C,\mathsf{some}}}u^{\frac{s+(\beta_s+(k-s)\alpha_s)d}{(s-1)d+1}}D^{\frac{\alpha_sd}{(s-1)d+1}}\left(\prod_{i\in I}\abs{(P_i)_{C_i}}\right)^{-\frac{1}{(s-1)d+1}}\]
for each tuple of cells $C\in\cC_{V_1}\times\cdots\times\cC_{V_{k-1}}$ and each $I\subseteq[k-1]$ with $|I|=s\neq 0$.

First fix a choice $V=(V_1,\ldots, V_{k-1})\in\cV^{(1)}_{i_1}\times\cdots\times\cV^{(k-1)}_{i_{k-1}}$. We will bound the sum over $C\in \cC_{V_1}\times\cdots\times \cC_{V_{k-1}}$. By \cref{eq:size-of-parts}, we know that 
\[\abs{(P_\ell)_{C_\ell}}\lesssim_{d}\frac{\abs{(P_\ell)_{V_\ell}}}{A^{i_{\ell}}\prod_{j=1}^{d-i_{\ell}}\deg Q_j^{(V_\ell)}}\]
for each $1\leq \ell\leq k-1$. For each $q\in P_k$, we count the number of cells $C$ so that $q\in P_{C,\mathsf{some}}$. This only occurs if there exists some $1\leq r\leq t$ so that $\sgn(g_r(x_{11},\ldots,x_{k-1,d},q_1,\ldots,q_d))$ is not constant on $C\subseteq \R^{(k-1)d}$. Applying \cref{thm:number-of-bad-tuples} to bound the number of such cells and summing over $r$ and $q\in P_k$, we conclude that
\begin{equation}
\label{eq:sum-of-p-c-some}
\sum_{C\in\cC_{V_1}\times\cdots\times\cC_{V_{k-1}}}|P_{C,\mathsf{some}}|\lesssim_{d,k}\abs{P_k}DA^{i_1+\cdots+i_{k-1}-1}\prod_{\ell=1}^{k-1}\prod_{j=1}^{d-i_{\ell}(A)}\deg Q_j^{(V_\ell)}
\end{equation}
for each $V\in\cV^{(1)}_{i_1}\times\cdots\times\cV^{(k-1)}_{i_{k-1}}$.

Thus we get the bound
\begin{align*}
\sum_{C\in \cC_{V_1}\times\cdots\times \cC_{V_{k-1}}}E_0(C) &\lesssim_{d,k}u^{1/d}D\left(\prod_{\ell=1}^{k-1}\frac{\abs{(P_\ell)_{V_\ell}}}{A^{i_{\ell}}\prod_{j=1}^{d-i_{\ell}}\deg Q_j^{(V_\ell)}}\right)^{1-1/d}\\&\qquad\qquad\qquad\cdot\sum_{C\in \cC_{V_1}\times\cdots\times \cC_{V_{k-1}}}\left(\prod_{\ell=1}^{k-1}\abs{(P_{\ell})_{C_{\ell}}}\right)^{1/d}\abs{P_{C,\mathsf{some}}}^{1-1/d}\\
&\leq u^{1/d}D\left(\prod_{\ell=1}^{k-1}\frac{\abs{(P_\ell)_{V_\ell}}}{A^{i_{\ell}}\prod_{j=1}^{d-i_{\ell}}\deg Q_j^{(V_\ell)}}\right)^{1-1/d}\paren{\sum_{C\in \cC_{V_1}\times\cdots\times \cC_{V_{k-1}}}\prod_{\ell=1}^{k-1}\abs{(P_{\ell})_{C_{\ell}}}}^{1/d}\\&\qquad\qquad\qquad\cdot\paren{\sum_{C\in \cC_{V_1}\times\cdots\times \cC_{V_{k-1}}}\abs{P_{C,\mathsf{some}}}}^{1-1/d}\tag*{\tiny [H\"older's inequality]}\\
&\lesssim_{d,k}u^{1/d}D\left(\prod_{\ell=1}^{k-1}\frac{\abs{(P_\ell)_{V_\ell}}}{A^{i_{\ell}}\prod_{j=1}^{d-i_{\ell}}\deg Q_j^{(V_\ell)}}\right)^{1-1/d}\paren{\prod_{\ell=1}^{k-1}\abs{(P_{\ell})_{V_{\ell}}}}^{1/d}\\&\qquad\qquad\qquad\cdot\paren{\abs{P_k}DA^{i_1+\cdots+i_{k-1}-1}\prod_{\ell=1}^{k-1}\prod_{j=1}^{d-i_{\ell}(A)}\deg Q_j^{(V_\ell)}}^{1-1/d}\tag*{\tiny [by \cref{eq:sum-of-p-c-some}]}\\
&=u^{1/d}D^{2-1/d}\abs{(P_1)_{V_1}}\cdots \abs{(P_{k-1})_{V_{k-1}}}\paren{\frac{\abs{P_k}}A}^{1-1/d}.
\end{align*}

Then summing over $V\in\cV^{(1)}\times\cdots\times\cV^{(k-1)}$ and applying \cref{eq:total-size-of-point-sets}, we conclude that
\[\sum_{V\in\cV^{(1)}\times\cdots\times\cV^{(k-1)}}\sum_{C\in \cC_{V_1}\times\cdots\times \cC_{V_{k-1}}}E_0(C)\lesssim_{d,k}u^{1/d}D^{2-1/d}\abs{P_1}\cdots \abs{P_{k-1}}\paren{\frac{\abs{P_k}}A}^{1-1/d}.\]
Let $F(A)$ denote the expression on the right-hand side above.

Now we turn to bounding $E_I(C)$. Again fix a choice $V=(V_1,\ldots, V_{k-1})\in\cV^{(1)}_{i_1}\times\cdots\times\cV^{(k-1)}_{i_{k-1}}$. We will bound the sum over $C\in \cC_{V_1}\times\cdots\times \cC_{V_{k-1}}$. Applying the bound on $|(P_\ell)_{V_\ell}|$ we see that
\[\begin{split}
E_I(C)\lesssim_{d,k}u^{\frac{s+(\beta_s+(k-s)\alpha_s)d}{(s-1)d+1}}D^{\frac{\alpha_sd}{(s-1)d+1}}\abs{P_{C, \mathsf{some}}}&\left(\prod_{\ell\in I}\frac{\abs{(P_\ell)_{V_\ell}}}{A^{i_{\ell}}\prod_{j=1}^{d-i_{\ell}}\deg Q_j^{(V_\ell)}}\right)^{1-\frac{1}{(s-1)d+1}}\\
&\cdot\prod_{\ell\in[k-1]\setminus I}\frac{\abs{(P_\ell)_{V_\ell}}}{A^{i_{\ell}}\prod_{j=1}^{d-i_{\ell}}\deg Q_j^{(V_\ell)}}.
\end{split}\]
Together with \cref{eq:sum-of-p-c-some} and the fact that $\alpha_s+(s-1) = \alpha_{s+1}-2$, we get
\[\begin{split}
\sum_{C\in \cC_{V_1}\times\cdots\times \cC_{V_{k-1}}}E_I(C)\lesssim_{d,k}\abs{P_k} & u^{\frac{s+(\beta_s+(k-s)\alpha_s)d}{(s-1)d+1}}D^{\frac{1+(\alpha_{s+1}-2)d}{(s-1)d+1}}A^{-1}\\
&\cdot\prod_{\ell\in I}|(P_\ell)_{V_\ell}|\left(\frac{A^{i_{\ell}}\prod_{j=1}^{d-i_{\ell}}\deg Q_j^{(V_\ell)}}{\abs{(P_{\ell})_{V_{\ell}}}}\right)^{\frac{1}{(s-1)d+1}}\prod_{\ell\in[k-1]\setminus I}|(P_\ell)_{V_\ell}|.
\end{split}\]
For each $\ell\in I$, we know that
\begin{align*}
\sum_{i_\ell=0}^d\sum_{V_{\ell}\in \cV^{(\ell)}_{i_\ell}}|&(P_{\ell})_{V_{\ell}}|\left(\frac{A^{i_{\ell}}\prod_{j=1}^{d-i_{\ell}}\deg Q_j^{(V_\ell)}}{\abs{(P_{\ell})_{V_{\ell}}}}\right)^{\frac{1}{(s-1)d+1}}\\
&=\sum_{i_\ell=0}^d\sum_{V_{\ell}\in \cV^{(\ell)}_{i_\ell}}\abs{(P_{\ell})_{V_{\ell}}}^{1-\frac{1}{(s-1)d+1}}\left(A^{i_{\ell}}\prod_{j=1}^{d-i_{\ell}}\deg Q_j^{(V_\ell)}\right)^{\frac{1}{(s-1)d+1}}\\
&\leq \sum_{i_\ell=0}^d\paren{\sum_{V_{\ell}\in \cV^{(\ell)}_{i_\ell}}\abs{(P_{\ell})_{V_{\ell}}}}^{1-\frac{1}{(s-1)d+1}}\left(\sum_{V_{\ell}\in \cV^{(\ell)}_{i_\ell}}A^{i_{\ell}}\prod_{j=1}^{d-i_{\ell}}\deg Q_j^{(V_\ell)}\right)^{\frac{1}{(s-1)d+1}}\tag*{\tiny [H\"older's inequality]}\\
&\lesssim_d \sum_{i_\ell=0}^d\abs{P_{\ell}}^{1-\frac{1}{(s-1)d+1}}\left(A^d\right)^{\frac{1}{(s-1)d+1}}\tag*{\tiny [by \cref{eq:total-size-of-point-sets} and \cref{eq:total-deg}]}\\
&\lesssim_d\abs{P_{\ell}}\left(\frac{A^d}{\abs{P_{\ell}}}\right)^{\frac{1}{(s-1)d+1}}.
\end{align*}

Applying the above for $\ell\in I$ and applying \cref{eq:total-size-of-point-sets} for $\ell\not\in I$, we conclude that
\begin{align*}
\sum_{V\in\cV^{(1)}\times\cdots\times\cV^{(k-1)}}&\sum_{C\in \cC_{V_1}\times\cdots\times \cC_{V_{k-1}}}E_I(C)\\
&\lesssim_{d,k}|P_k|u^{\frac{s+(\beta_s+(k-s)\alpha_s)d}{(s-1)d+1}}D^{\frac{1+(\alpha_{s+1}-2)d}{(s-1)d+1}}A^{-1}\prod_{\ell\in I}\abs{P_{\ell}}\left(\frac{A^d}{\abs{P_{\ell}}}\right)^{\frac{1}{(s-1)d+1}}\prod_{\ell\in[k-1]\setminus I}|P_\ell|\\
&=\abs{P_1}\cdots \abs{P_k}u^{\frac{s+(\beta_s+(k-s)\alpha_s)d}{(s-1)d+1}}D^{\frac{1+(\alpha_{s+1}-2)d}{(s-1)d+1}}A^{\frac{d-1}{(s-1)d+1}}\prod_{\ell\in I}\abs{P_{\ell}}^{-\frac1{(s-1)d+1}}
\end{align*}
for each $I\subseteq[k-1]$ with $|I|=s\neq 0$. Let $G_I(A)$ denote the right-hand side above.

What we have shown is that for each $A\geq 1$,
\[\sum_{C\in\cC^{(1)}\times\cdots\times\cC^{(k-1)}}|E[(P_1)_{C_1}\times\cdots\times (P_{k-1})_{C_{k-1}}\times P_{C,\mathsf{some}}]|\lesssim_{d,k}F(A)+\sum_{\emptyset\neq I\subseteq [k-1]}G_I(A).\]
It remains to bound $\sum_C|E[(P_1)_{C_1}\times\cdots\times (P_{k-1})_{C_{k-1}}\times P_{C,\mathsf{all}}]|$.
Note that for each $C$, either there exists $1\leq \ell\leq k-1$ with $\abs{(P_{\ell})_{C_{\ell}}}<u$ or $\abs{P_{C,\mathsf{all}}}<u$. Define
\[P'_\ell=\bigcup_{C_\ell\in\cC^{(\ell)}:|(P_\ell)_{C_\ell}|<u}(P_\ell)_{C_\ell}\]
for each $1\leq \ell\leq k-1$. It is then clear that
\[\begin{split}
\sum_{C\in\cC^{(1)}\times\cdots\times\cC^{(k-1)}}|E[(P_1)_{C_1}\times\cdots\times (P_{k-1})_{C_{k-1}}\times &P_{C,\mathsf{all}}]|<u|P_1|\cdots|P_{k-1}|\\
&+\sum_{\ell=1}^{k-1}|E[P_1\times\cdots\times P'_\ell\times \cdots\times P_{k-1}\times P_k]|.
\end{split}\]
Note that the first term is one that appears on the right-hand side of the desired bound. (It is the $I=\{k\}$ term.) We now bound the second term.

As $|\cC^{(\ell)}|\lesssim_d A^d$ by \cref{thm:number-of-cells,eq:total-deg}, we see that $\abs{P'_{\ell}}\lesssim_d uA^d$ for each $1\leq \ell\leq k-1$. We also have the trivial bound $|P'_\ell|\leq |P_\ell|$. We now apply the weak Zarankiewicz bound, \cref{eq:weak-zarankiewicz-full}, with $P_{\ell}'$ as the last part. This gives
\begin{align*}
|E[P_1\times\cdots\times P'_\ell\times \cdots&\times P_{k-1}\times P_k]|
\lesssim_{d,k} u^{1/d}D(uA^d)^{1-1/d}\prod_{i\in[k]\setminus\{\ell\}}\abs{P_i}\\
&+\prod_{i\in[k]}\abs{P_{i}}\cdot \sum_{s=1}^{k-1}u^{\frac{s+(\beta_s+(k-s)\alpha_s)d}{(s-1)d+1}}D^{\frac{\alpha_sd}{(s-1)d+1}}\sum_{I\in \binom{[k]\setminus\{\ell\}}{s}}\left(\prod_{i\in I}\abs{P_i}\right)^{-\frac{1}{(s-1)d+1}}.
\end{align*}
Note that the second term consists only of terms that appear on the right-hand side of the desired bound. For the first term, note that it simplifies to $uDA^{d-1}\prod_{i\neq \ell}\abs{P_i}$ which is equal to $G_{\{\ell\}}(A)$. Putting everything together, we have shown that if there exists $A\geq 1$ so that
\begin{equation*}
F(A)+\sum_{\emptyset \neq I\subseteq [k-1]}G_I(A)
\lesssim_{d,k}|P_1|\cdots|P_k|\sum_{s=1}^{k}u^{\frac{s+(\beta_s+(k-s)\alpha_s)d}{(s-1)d+1}}D^{\frac{\alpha_sd}{(s-1)d+1}}\sum_{I\in \binom{[k]}{s}}\left(\prod_{i\in I}\abs{P_i}\right)^{-\frac{1}{(s-1)d+1}}\label{eq:Zarankiewicz-claim},
\end{equation*}
then we can close the induction.

Observe that as a function of $A$, the map $F\colon\R_{>0}\to\R_{>0}$ is decreasing and surjective while $G_I\colon\R_{>0}\to\R_{>0}$ is increasing and surjective for all $\emptyset \neq I\subseteq [k-1]$. Thus for every $\emptyset \neq I\subseteq [k-1]$ there exists a unique $A_I>0$ so that $F(A_I)=G_I(A_I)$. Set $\abs{I}=s\in[k-1]$. From the fact that $\beta_s+(k-s)\alpha_s=\beta_{s+1}+(k-(s+1))\alpha_s\leq \beta_{s+1}+(k-(s+1))\alpha_{s+1}$, we can compute
\begin{align*}
    F(A_I) &= F(A_I)^{\frac{d}{sd+1}}G_I(A_I)^{\frac{(s-1)d+1}{sd+1}}\\
    &\leq  |P_1|\cdots|P_k|u^{\frac{(s+1)+(\beta_{s+1}+(k-(s+1))\alpha_{s+1})d}{sd+1}}D^{\frac{\alpha_{s+1}d}{sd+1}}\left(\prod_{\ell\in I\cup\{k\}}\abs{P_\ell}\right)^{-\frac{1}{sd+1}},
\end{align*}
which is the term corresponding to $I\cup\{k\}$ on the right-hand side of the desired bound.
Set $A=\min_{\emptyset \neq I\subseteq [k-1]}A_I$. If $A\geq 1$, then we have $G_I(A_I)=F(A_I)\leq F(A)$ for all $\emptyset \neq I\subseteq [k-1]$ and we also have that $F(A)$ is equal to the $I\cup\{k\}$ term on the right-hand side of the desired bound where $I$ minimizes $A_{I}$.

Therefore, it remains to deal with the case where $A<1$. In this case, we can find $\emptyset\neq I\subseteq [k-1]$ with $A_I<1$. Set $|I|=s\neq 0$. We know that $F(1)<F(A_I)=G_I(A_I)<G_I(1)$ and so
\begin{align*}
u^{\frac1d}D|P_1|\cdots|P_{k-1}||P_k|^{1-\frac1d}
&\leq F(1)
<F(1)^{\frac{d}{sd+1}}G_I(1)^{\frac{(s-1)d+1}{sd+1}}\\
&\leq |P_1|\cdots|P_k|u^{\frac{(s+1)+(\beta_{s+1}+(k-(s+1))\alpha_{s+1})d}{sd+1}}D^{\frac{\alpha_{s+1}d}{sd+1}}\left(\prod_{\ell\in I\cup\{k\}}\abs{P_\ell}\right)^{-\frac{1}{sd+1}}.
\end{align*}
In this case, we apply the weak Zarankiewicz bound, \cref{eq:weak-zarankiewicz-full}, to the whole hypergraph $H$. The above inequality show that the first term in that bound is less than one of the terms in the right-hand side of the desired bound while we already noted that all the other terms in that bound are sufficiently small to deduced the desired inequality. This completes the induction.
\end{proof}

Finally, the symmetric version stated in the introduction follows immediately from this result.

\begin{proof}[Proof of \cref{thm:zarankiewicz}]
The desired bound for general $k$ follows from \cref{thm:zarankiewicz-full} by setting $|P_1|=\cdots=|P_k|=N$. Now specializing to $k=2$ gives
\[\abs{E}\lesssim_d N^2\paren{\paren{\frac uN}+D^{\frac{2d}{d+1}}\paren{\frac uN}^{\frac2{d+1}}}.\]
Note that it suffices to prove the claimed bound for $u\leq N$ since otherwise it is weaker than the trivial bound $|E|\leq N^2$. Then since $D\geq 1$ and $u/N\leq 1$, the second term above dominates, giving the claimed bound.
\end{proof}

\section{Additional applications}
\label{sec:applications}

\subsection{The Erd\H{o}s unit distance problem}
\label{ssec:unit-distance}
The Erd\H{o}s unit distance problem asks for the maximum number of pairs of points at distance 1 among a set of $N$ points in $\R^d$. For $d=2,3$, it is conjectured that an appropriately scaled section of the integer lattice asymptotically maximizes the number of unit distances. For $d\geq 4$ a set of $N$ points can span $\Theta(N^2)$ unit distances. This can be achieved by placing the points on two circles lying in orthogonal planes. One can make this problem non-trivial by ruling out this configuration. This was studied by Oberlin and Oberlin who proved a bound of $O_d(N^{2-1/d})$ if no $d$ of the points are contained in a $(d-2)$-dimensional affine subspace \cite{OO15}.

Under the weaker restriction that the unit distance graph is $K_{u,u}$-free, a better bound follows from the semialgebraic Zarankiewicz machinery. From the work of Fox--Pach--Sheffer--Suk--Zahl one deduces the bound of $O_{d,u}(N^{2d/(d+1)+o(1)})$ unit distances \cite{FPSSZ17}. Do \cite{Do19} improved this to $O_{d}(uN^{2d/(d+1)+o(1)})$. Frankl and Kupavskii \cite{FK21}, using cutting techniques that are tailored to the unit distance graph, improved the bound to $O_{d}(u^{2/(d+1)}N^{2d/(d+1)+o(1)})$. As a corollary of our semialgebraic Zarankiewicz result, \cref{thm:zarankiewicz}, we remove the $o(1)$-loss in the exponent. Furthermore we give a short geometric argument that any $K_{u,u}$ in the unit distance graph must come from a pair of orthogonal spheres.

\begin{proposition}
\label{thm:unit-distance}
Let $P\subset\R^d$ be a set of $N$ points so that every sphere of dimension $\floor{d/2}-1$ contains fewer than $u$ points of $P$. Then the number of unit distances spanned by $P$ is at most
\[O_d\paren{u^{\frac{2}{d+1}}N^{\frac{2d}{d+1}}}.\]
\end{proposition}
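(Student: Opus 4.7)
The plan is to realize the unit distance graph on $P$ as a semialgebraic graph of total degree $O(1)$ and invoke \cref{thm:zarankiewicz}. Concretely, consider the bipartite graph on $P\sqcup P$ with edges $\set{(x,y)\in P\times P : \norm{x-y}^2=1}$; this is cut out by the single polynomial $g(x,y)=\norm{x-y}^2-1$ of degree $2$, so it is semialgebraic in $\R^d$ with total degree $D\leq 2$. Any $K_{u,u}$ in this bipartite graph has disjoint parts (since no point is at unit distance from itself), and every unit distance in $P$ is counted twice as an edge, so once we verify that the graph is $K_{u,u}$-free under the stated hypothesis, the $k=2$ case of \cref{thm:zarankiewicz} with $N=|P|$ and $D=2$ immediately yields the claimed bound $O_d(u^{2/(d+1)}N^{2d/(d+1)})$.

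The remaining task is purely geometric: show that if $A,B\subseteq P$ are disjoint sets with $|A|=|B|=u$ and $\norm{a-b}=1$ for every $(a,b)\in A\times B$, then one of $A$ or $B$ lies on a sphere of dimension at most $\floor{d/2}-1$. Set $m=\floor{d/2}$ and split on $\dim\mathrm{aff}(A)$. If $\dim\mathrm{aff}(A)\leq m$, pick any $b_0\in B$; then $A$ lies in $\mathrm{aff}(A)\cap\set{x\in\R^d:\norm{x-b_0}^2=1}$, which is a sphere inside $\mathrm{aff}(A)$ of dimension at most $\dim\mathrm{aff}(A)-1\leq m-1$. Otherwise $\dim\mathrm{aff}(A)\geq m+1$, so $A$ contains $m+2$ affinely independent points $a_0,\ldots,a_{m+1}$, and $B\subseteq\bigcap_{i=0}^{m+1}\set{x:\norm{x-a_i}^2=1}$. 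Subtracting the $i=0$ equation from each of the other $m+1$ yields $m+1$ linearly independent affine equations $2(a_i-a_0)\cdot x=\norm{a_i}^2-\norm{a_0}^2$ (linear independence coming from affine independence of $\set{a_i}$), defining an affine subspace of dimension $d-m-1$; intersecting with the sphere $\set{x:\norm{x-a_0}^2=1}$ yields a sphere of dimension at most $d-m-2$. A direct case check for $d$ even and odd shows $d-m-2\leq m-1=\floor{d/2}-1$, so $B$ sits on a sphere of the required dimension, contradicting the hypothesis that every such sphere contains fewer than $u$ points of $P$.

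The main obstacle here is simply keeping the dimension arithmetic straight in the second case; no serious new ideas are required beyond the geometric claim and \cref{thm:zarankiewicz}. Minor degeneracies (empty intersections or intersections of strictly smaller dimension than the generic count) cause no trouble: any sphere of dimension strictly less than $\floor{d/2}-1$ embeds in one of dimension exactly $\floor{d/2}-1$ by extending the ambient affine subspace (which is possible since $\floor{d/2}-1\leq d-1$), so the hypothesis on $P$ still applies, and a strictly smaller or empty intersection only makes one of $|A|,|B|$ smaller than $u$, giving the same contradiction.
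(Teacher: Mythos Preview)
Your proof is correct and takes essentially the same approach as the paper: encode the unit distance graph as a semialgebraic bipartite graph of total degree $2$, verify it is $K_{u,u}$-free via the sphere lemma, and invoke the $k=2$ case of \cref{thm:zarankiewicz}. Your geometric argument (case split on $\dim\mathrm{aff}(A)$ at the threshold $m=\floor{d/2}$, subtracting sphere equations to cut down to an affine subspace) is a minor presentational variant of the paper's, which instead builds the full orthogonal decomposition $V\oplus W$ and places the two sides on spheres of dimensions $r-1$ and $d-r-1$ respectively; the content is the same, though the paper's orthogonal decomposition is reused in the subsequent proposition on equilateral triangles.
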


\begin{proof}
Define $G$ to be the bipartite graph where each vertex set is a copy of $P$ and there is an edge between $p$ and $q$ if they are at distance 1. Clearly $G$ is a semialgebraic graph of total degree 2. Suppose for contradiction that $p_1,\ldots,p_u,q_1,\ldots,q_u$ form a copy of $K_{u,u}$ in $G$.

Let $V$ be the affine subspace of $\R^d$ spanned by $p_1,\ldots, p_u$. Suppose that $\dim V=r$ and without loss of generality, suppose that $V$ is the affine span of $p_1,\ldots,p_{r+1}$.
For each $i\in[r]$, define the hyperplane $H_i\subseteq V$ to be the perpendicular bisector of the segment $\overline{p_ip_{r+1}}$. 
As $p_1,\ldots, p_{r+1}$ affinely span the $r$-dimensional affine subspace $V$, we see that the normal vectors to $H_1,\ldots, H_r$ are linearly independent, showing that they intersect at a unique point $v$. It is then clear that $v$ is equidistant from $p_1,\ldots,p_{r+1}$.

Without loss of generality, assume that $v$ is the origin. Let $W$ be the orthogonal complement of $V$ (i.e., the subspace of dimension $d-r$ which is orthogonal to $V$ and passes through the origin). We claim that if $q$ is equidistant from $p_1,\ldots,p_{r+1}$, then it must lie in $W$. Write $q=q_V+q_W$ where $q_V\in V$ and $q_W\in W$. Then
\[\|p_i-q\|^2=\|p_i-q_V\|^2+\|q_W\|^2.\]
In particular, $q_V\in V$ is equidistant from $p_1,\ldots,p_{r+1}$, implying that $q_V\in H_1\cap \cdots\cap H_r=\{0\}$. Thus $q=q_W\in W$, as desired.

Let $\alpha=\|p_1\|=\cdots=\|p_{r+1}\|$. For $j\in[u]$, we know that $q_j\in W$ so in particular,
\[1=\|p_1-q_j\|^2=\|p_1\|^2+\|q_j\|^2=\alpha^2+\|q_j\|^2.\]
Thus $q_1,\ldots, q_u$ all lie on the $(d-r-1)$-dimensional sphere of radius $\beta$ in $W$ where $\alpha^2+\beta^2=1$.

For $i\in[u]$, we know that $p_i\in V$ by assumption. Then
\[1=\|p_i-q_1\|^2=\|p_i\|^2+\|q_1\|^2=\|p_i\|^2+\beta^2.\]
Thus $p_1,\ldots,p_u$ all lie on the $(r-1)$-dimensional sphere of radius $\alpha$ in $V$. Clearly one of these two spheres has dimension at most $\floor{d/2}-1$, contradicting our assumption on $P$.
Now the desired bound follows from \cref{thm:zarankiewicz}.
\end{proof}

When $d=2,3$, \cref{thm:unit-distance} applies to any set of $N$ points with $u=3$. For $d=2$ this recovers $O(N^{4/3})$, the best-known upper bound \cite{SST84}. For $d=3$ this gives the bound of $O(N^{3/2})$ which was the best-known bound \cite{CEGSW90,KMSS12,Zah13} until recent work of Zahl \cite{Zah19} that used curve cutting to improve the bound to $N^{3/2-1/394+o(1)}$.

\begin{example}
Let $P$ be a $N^{1/d}\times\cdots\times N^{1/d}$ section of the integer lattice, appropriately scaled. By the pigeonhole principle, there exists a scaling so that $P$ spans $\gtrsim_d N^{2-2/d}$ unit distances. By Schwartz--Zippel for varieties (see, e.g, \cite[Lemma 14]{BT12}), every sphere of dimension $\floor{d/2}-1$ contains at most $\lesssim_d N^{(\floor{d/2}-1)/d}$ points of $P$. Thus \cref{thm:unit-distance} gives an upper bound of
\[\lesssim_d  \paren{N^{\paren{\floor{\frac d2}-1}/d}}^{\frac{2}{d+1}}N^{\frac{2d}{d+1}}=\begin{cases}
N^{2-\frac{d+2}{d(d+1)}}\quad&\text{if }d\text{ is even},\\
N^{2-\frac{d+3}{d(d+1)}}&\text{if }d\text{ is odd}.
\end{cases}\]
This is fairly far from the lower bound even for large $d$, i.e., the upper bound approaches $N^{2-1/d}$ for large $d$ while the lower bound is $N^{2-2/d}$.
\end{example}

We can apply the same argument to bound the number of equilateral triangles contained in a set of $N$ points as long as the points do not cluster on a sphere of dimension $\floor{d/3}-1$. Note that this proof bounds not just the number of unit equilateral triangles, but the total number of equilateral triangles of all side lengths. This argument also easily adapts to bound the number of similar copies of any fixed triangle or more generally any fixed simplex of any dimension.

\begin{proposition}
\label{thm:equilateral-triangle}
Let $P\subset\R^d$ be a set of $N$ points so that every sphere of dimension $\floor{d/3}-1$ contains fewer than $u$ points of $P$. Then the number of equilateral triangles spanned by $P$ is at most
\[O_d\paren{u^2N^{\frac{3d}{d+1}}+u^{\frac{2d+3}{2d+1}}N^{\frac{6d}{2d+1}}}.\]
\end{proposition}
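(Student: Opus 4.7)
The plan is to mirror the proof of \cref{thm:unit-distance}, using a 3-uniform 3-partite semialgebraic hypergraph to encode equilateral triangles and applying \cref{thm:zarankiewicz} with $k = 3$. Define $H = (P \sqcup P \sqcup P, E)$ where $(p_1, p_2, p_3) \in E$ iff $\|p_1 - p_2\|^2 = \|p_2 - p_3\|^2 = \|p_3 - p_1\|^2$. This is semialgebraic of total degree $O(1)$, and the number of equilateral triangles spanned by $P$ equals $|E|/6$. Once we show $H$ is $K^{(3)}_{u,u,u}$-free, the stated bound will follow by plugging $k = 3$ and $D = O(1)$ into \cref{thm:zarankiewicz}.

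To prove $K^{(3)}_{u,u,u}$-freeness, suppose that $A, B, C \subseteq P$ each of size $u$ are such that every triple $(a, b, c) \in A \times B \times C$ forms an equilateral triangle. Working through the equalities $\|a - b\| = \|b - c\| = \|c - a\|$ over all choices forces a single common side length $\alpha$ for every such triple: for instance, $\|a_i - b_j\| = \|b_j - c_k\|$ (independent of $k$ on the left, of $i$ on the right) shows that $\|a_i - b_j\|$ depends only on $j$, and chaining through the remaining identities collapses everything to a constant $\alpha$.

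Let $V_A, V_B, V_C$ denote the affine spans of $A, B, C$ with dimensions $r_A, r_B, r_C$. Since each $a \in A$ is equidistant at distance $\alpha$ from every $b \in B$, its orthogonal projection $c_B$ onto $V_B$ is the (unique) circumcenter of $B$ with some circumradius $\rho_B$, and $a$ lies in $\Pi_B := c_B + \vec{V_B}^{\perp}$ with $\|a - c_B\|^2 = \alpha^2 - \rho_B^2$. Differencing gives $\vec{V_A} \subseteq \vec{V_B}^{\perp}$, so $\vec{V_A} \perp \vec{V_B}$; by the symmetric argument the subspaces $\vec{V_A}, \vec{V_B}, \vec{V_C}$ are pairwise orthogonal in $\R^d$, yielding $r_A + r_B + r_C \leq d$ and hence (WLOG) $r_A \leq \lfloor d/3 \rfloor$. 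Projecting $c_B$ orthogonally onto $V_A$ to a point $c_B'$ and applying Pythagoras once more gives $\|a - c_B'\|^2 = (\alpha^2 - \rho_B^2) - \|c_B - c_B'\|^2$ for every $a \in A$; since $|A| = u \geq 2$ the shared value is positive, so $A$ lies on a sphere inside $V_A$ of dimension $r_A - 1 \leq \lfloor d/3 \rfloor - 1$, contradicting the hypothesis on $P$.

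The main obstacle is this final geometric step: the direct argument only places $A$ on a $(d - r_B - 1)$-dimensional sphere inside $\Pi_B$, which is typically much larger than $\lfloor d/3 \rfloor - 1$. The sharper bound $r_A - 1$ requires the second Pythagorean projection from $c_B$ onto $V_A$, combined with the pigeonhole bound $r_A + r_B + r_C \leq d$, to guarantee that the small-dimension part of the configuration lies on a sphere of the required dimension.
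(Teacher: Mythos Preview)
Your proposal is correct and follows essentially the same approach as the paper: encode equilateral triangles as a $3$-uniform semialgebraic hypergraph of bounded total degree, show via the orthogonal-spans argument that it is $K^{(3)}_{u,u,u}$-free, and apply \cref{thm:zarankiewicz} with $k=3$. The only small omission is that plugging $k=3$ into \cref{thm:zarankiewicz} also produces an $s=1$ term of order $uN^2$, which the paper absorbs using the elementary inequality $uN^2\le u^{(2d+3)/(2d+1)}N^{6d/(2d+1)}$ for $u,N\ge 1$.
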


\begin{proof}
Let $H$ be the 3-uniform 3-partite hypergraph whose vertex sets are three copies of $P$ with an edge between a triple of vertices if they form an equilateral triangle. This is a semialgebraic hypergraph of total degree 4. It is defined by the equation
\[\begin{split}((x_1,x_2),(y_1,y_2),(z_1,z_2))\in E(H)&\iff (x_1-y_1)^2+(x_2-y_2)^2-(x_1-z_1)^2-(x_2-z_2)^2=0\\&\text{ and }(x_1-y_1)^2+(x_2-y_2)^2-(y_1-z_1)^2-(y_2-z_2)^2=0.\end{split}\]

Suppose for contradiction that $p_1,\ldots,p_u,q_1,\ldots,q_u,r_1,\ldots,r_u\in P$ form a copy of $K_{u,u,u}^{(3)}$ in $H$. By essentially the same argument as in \cref{thm:unit-distance}, this implies that there exist mutually orthogonal affine subspaces $U,V,W$ all passing through the same point $v$ and $\alpha,\beta,\gamma\geq 0$ such that $\alpha^2+\beta^2=\alpha^2+\gamma^2=\beta^2+\gamma^2$ so that $p_1,\ldots, p_u$ lie on a sphere in $U$ of radius $\alpha$, so that $q_1,\ldots, q_u$ lie on a sphere in $V$ of radius $\beta$, and so that $r_1,\ldots,r_u$ lie on a sphere in $W$ of radius $\gamma$. One of $U,V,W$ has dimension at most $\floor{d/3}$, meaning that $u$ of these points lie on the same sphere of dimension at most $\floor{d/3}-1$, contradiction.
Therefore the desired bound follows from \cref{thm:zarankiewicz} and the fact that $uN^2\leq u^{\frac{2d+3}{2d+1}}N^{\frac{6d}{2d+1}}$ as $u,N\geq 1$.
\end{proof}

\begin{remark}
With more work, one can improve this bound to $O_d(u^{3/(2d+1)}N^{6d/(2d+1)})$. For $q_1,\ldots,q_u\subseteq P$, let $H'=(P\sqcup P,T')$ be their common neighborhood, i.e., $T'\subseteq P^2$ is the set of pairs $(p_1,p_2)$ such that $(p_1,p_2,q_i)\in E(H)$ for all $i\in[u]$. In the proof of \cref{thm:zarankiewicz-full}, we use the fact that $H'$ is a semialgebraic graph of total degree $O(u)$. In this case, one can check that $H'$ actually has total degree $O_d(1)$. Now running through the proof of \cref{thm:zarankiewicz-full} with this improved bound on the total degree of $H'$ gives the claimed bound on the number of equilateral triangles.
\end{remark}

\subsection{The Erd\H{o}s--Hajnal problem}
The next two applications involve semialgebraic graphs and hypergraphs which are not partite. These are defined as follows.

We say that the formula $\Phi(\sgn(g_1(x_{11},\ldots,x_{kd})),\ldots,\sgn(g_t(x_{11},\ldots,x_{kd})))$ is symmetric if for each $(x_{11},\ldots,x_{kd})\in\R^{kd}$ the value of 
\[\Phi(\sgn(g_1(x_{\pi(1)1},\ldots,x_{\pi(k)d})),\ldots,\sgn(g_t(x_{\pi(1)1},\ldots,x_{\pi(k)d})))\]
is independent of $\pi\in\mathfrak S_k$. Then we say that $H=(V,E)$ is a $k$-uniform semialgebraic hypergraph if there exists a symmetric $k$-uniform $k$-partite semialgebraic hypergraph $H'=(V\sqcup\cdots\sqcup V,E')$ such that $\{v_1,\ldots,v_k\}$ is an edge of $H$ if and only if $(v_1,\ldots,v_k)$ is an edge of $H'$.

By a quantitative version of Ramsey's theorem proved by Erd\H{o}s and Szekeres \cite{ESz35}, every $N$-vertex graph has a clique or independent set of size $\Omega(\log N)$. Erd\H{o}s later proved that random graphs almost surely match this bound up to a constant multiplicative factor \cite{Erd47}. However, it is believed that many restricted classes of graphs have significantly larger cliques or independent sets. We say that a family of graphs $\cF$ has the Erd\H{o}s--Hajnal property if there exists some $c=c(\cF)>0$ such that every $N$-vertex graph in $\cF$ contains a clique or independent set of size at least $N^c$. The famous Erd\H{o}s--Hajnal conjecture states that the family of induced $H$-free graphs has the Erd\H{o}s--Hajnal property for every $H$.

In the paper which introduced semialgebraic graphs, Alon, Pach, Pinchasi, Radoi\v{c}i\'c, and Sharir proved that the class of semialgebraic graphs of bounded dimension and total degree has the Erd\H{o}s--Hajnal property \cite{APPRS05}. More recent breakthroughs have shown that numerous other families have the Erd\H{o}s--Hajnal property such as graphs of bounded VC-dimension \cite{NSS23f} and induced $P_5$-free graphs \cite{NSS23g}. From our semialgebraic Tur\'an result we improve the quantitative bounds on the Erd\H{o}s--Hajnal property for semialgebraic graphs, showing that such graphs contain a clique or independent set of size $N^{\Omega(\log\log D/d\log D)}$.
We also show that this exponent is tight up to a constant factor.

\begin{proposition}
\label{thm:erdos-hajnal-lower}
Every $N$-vertex semialgebraic graph in $\R^d$ with total degree $D$ contains a clique or independent set of size at least
\[N^{\paren{\frac12-o(1)}\frac{\log\log D}{d\log D}}\]
where the $o(1)$ term goes to zero for each fixed $d$ as $D$ goes to infinity.
\end{proposition}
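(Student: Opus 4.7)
The plan is to combine iterated applications of the regularity lemma (\cref{thm:main-equitable}) with a Ramsey argument on the reduced graph, with a delicate calibration of the error parameter.

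First I apply \cref{thm:main-equitable} to the symmetric bipartite version of $G$ on two copies of $V(G)$ with error parameter $\epsilon = D^{-1/\log\log D}$. Since the partitions are oblivious and depend only on the point sets, the partitions of the two copies coincide, yielding a single equitable partition $\Pi$ of $V(G)$ into $K = D^{d(1+o(1))}$ parts with at most $\epsilon K^2$ non-homogeneous (``mixed'') pairs. Tur\'an's theorem on the mixed-pair graph on $\Pi$ extracts a subset of $X = \Omega(1/\epsilon) = D^{(1-o(1))/\log\log D}$ pairwise homogeneous parts, and standard Ramsey on this subset gives $r = (1-o(1))(\log D)/(2\log\log D)$ parts that are either mutually complete (Case A) or mutually empty (Case B) in $G$.

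Iterating this construction recursively inside each of the $r$ selected parts (which remain semialgebraic with the same parameters $d, D$) produces a rooted tree $T$ of depth $t = \lfloor \log_K N \rfloor$ and branching $r$, with each internal node labeled A or B by its Ramsey outcome. Define $R(v)$ and $R'(v)$ on $T$ recursively by setting $R(\text{leaf}) = R'(\text{leaf}) = 1$, with $R(A\text{-node}) = \sum_c R(c)$, $R(B\text{-node}) = \max_c R(c)$, and $R'$ defined symmetrically with $\sum$ and $\max$ swapped. The recursive rule corresponds to a selection: for $R$, retain all children at every A-node and the max-achieving child at every B-node; the $R(\text{root})$ leaves produced are pairwise adjacent in $G$ since every pair's lowest common ancestor must be an A-node (at B-nodes only one child is retained). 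Similarly $R'(\text{root})$ yields an independent set. Using the elementary inequality $(\sum_i a_i)(\max_j b_j) \geq \sum_i a_i b_i \geq r \min_i a_i b_i$, a straightforward induction on depth shows $R(v) R'(v) \geq r^{\operatorname{depth}(v)}$; applying this at the root and using AM-GM,
\[\max(\omega(G), \alpha(G)) \geq \sqrt{R(\text{root}) R'(\text{root})} \geq r^{t/2}.\]

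Substituting $\log r = (1-o(1))\log\log D$, $\log K = (1+o(1)) d \log D$, and $t = \log N / \log K$ gives $r^{t/2} = N^{(1/2 - o(1))\log\log D/(d\log D)}$, as claimed. The main obstacle is the choice of $\epsilon$: a constant value leaves too many mixed pairs for Tur\'an to extract a large pairwise-homogeneous subset, while $\epsilon$ much smaller would inflate $K$ enough to degrade the exponent. The specific choice $\epsilon = D^{-1/\log\log D}$ is calibrated precisely to balance $\log r$ against $\log K$ so as to produce the leading constant $1/2$ in the final exponent.
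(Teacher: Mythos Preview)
Your argument is correct and reaches the same bound as the paper, but by a genuinely different route. The paper iterates \cref{thm:turan} with the fixed parameter $\epsilon=1/2$: each step produces a complete (or, in the complement, empty) bipartite pair $S\times T$ with $|S|\gtrsim_d N/D^d$ and $|T|\gtrsim N$, yielding the asymmetric binary recursion $h(N)\ge h(N/C)+h(N/(C_dD^d))$ for the size of the largest induced cograph; the binomial count of leaves is then optimised, and the final square-root loss comes from the perfectness of cographs. Your scheme instead applies the equitable regularity lemma directly with the calibrated $\epsilon=D^{-1/\log\log D}$, uses Tur\'an on the mixed-pair graph followed by Ramsey to extract $r\asymp\log D/\log\log D$ mutually complete or mutually empty parts, and recurses to obtain a balanced $r$-ary tree; the inequality $R(v)R'(v)\ge r^{\operatorname{depth}(v)}$ (which is indeed immediate from $(\sum a_i)(\max b_j)\ge\sum a_ib_i$) cleanly replaces the cograph-perfectness step. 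Both approaches incur the same square-root loss and the same leading constant $\tfrac12$. The paper's version is slightly more economical---it uses \cref{thm:turan} as a black box with a constant error parameter and needs no Ramsey step---while yours makes the interplay between $\epsilon$ and the number of parts more transparent, at the cost of the delicate calibration you correctly identify. One small point worth noting: your use of obliviousness to force $\Pi_1=\Pi_2$ when $P_1=P_2=V(G)$ is justified by the proof of \cref{thm:main} (each $\Pi_i$ is built from $P_i$ alone via \cref{thm:main-partition}), though this is not literally part of the statement of \cref{thm:main-equitable}.
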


\begin{proof}
This follows via the same argument as the original proof of Alon, Pach, Pinchasi, Radoi\v{c}i\'c, and Sharir \cite[Section 3.1]{APPRS05}. We use our improved Tur\'an result, \cref{thm:turan}, to improve the quantitative bounds. 

Recall that the family of cographs is defined to be the minimal family of graphs that is closed under disjoint unions, complements, and contains $K_1$. Fix $d, D$ and define $h(N)$ to be the largest integer $h$ such that any semialgebraic graph in $\R^d$ with total degree $D$ and at least $N$ vertices contains a cograph of size at least $h$. Clearly $h(1)=1$. 

Now let $G$ be such a graph with $N\geq 2$ vertices. Pick a bipartition $V(G)=P\sqcup Q$ into parts of size $\floor{N/2},\ceil{N/2}$. Either $G$ or its complement contains at least $|P||Q|/2$ edges in $P\times Q$. We now apply \cref{thm:turan} to either $G[P\times Q]$ or $\bar{G}[P\times Q]$ with $\epsilon=1/2$. Since both are semialgebraic bipartite graphs in $\R^d$ of total degree $D$, we find $S\subseteq P$ and $T\subseteq Q$ such that $S\times T$ is complete or empty in $G$ and such that $|S|\geq N/(C_dD^d)$ and $|T|\geq N/C$. If $S'\subseteq S$ is a cograph in $G[S]$ and $T'\subseteq T$ is a cograph in $G[T]$, then $S'\sqcup T'$ is a cograph in $G$. Therefore we conclude that
\[h(N)\geq h\paren{\frac N{C}}+h\paren{\frac{N}{C_dD^d}}\]
holds for all $N>1$ for some constants $C,C_d>1$.

Given $N, D, d$, define $t$ to be the greatest positive integer so that 
\[N\paren{\frac1C}^{td\log D/\log\log D}\paren{\frac{1}{C_dD^d}}^{t}\geq 1.\]
Expanding out the previous recurrence, we see that 
\[h(N)\geq \binom{\floor{td\log D/\log\log D}+t}{t}\geq (d\log D/\log\log D)^t.\]
Since
\[t=\floor{\frac{\log N}{\log(C_dD^d)+\paren{\frac{d\log D}{\log\log D}}\log C}}
=\left(\frac{1}{d\log D}-o(1)\right)\log N,\]
we conclude that 
\[h(N)\gtrsim N^{\paren{\frac1{d\log D}-o(1)}(\log\log D+\log d-\log\log\log D)}= N^{(1-o(1))\frac{\log \log D}{d\log D}}.\]

Finally, it is well known that every cograph is perfect and thus every $h$-vertex cograph contains a clique or independent set of size $\sqrt{h}$. Therefore we conclude that every $N$-vertex semialgebraic graph contains a clique or independent set of size at least $\sqrt{h(N)}$, as desired.
\end{proof}

We now construct semialgebraic graphs with no large cliques or independent sets. Taking the $r$-fold iterated blowup of a random graph of an appropriate size produces an $N$-vertex semialgebraic graph in $\R^2$ of total degree $D$ with no clique or independent set of size $N^{(2+o(1))\log\log D/\log D}$. To give an efficient semialgebraic representation of this iterated blowup, we represent an $r$-tuple by viewing it as a single number written in base $B$. A similar base representation trick was earlier used in a construction of Conlon, Fox, Pach, Sudakov, and Suk \cite[Section 3.4]{CFPSS14}. 

To get the correct $d$-dependence in the exponent, we want to more efficiently represent this iterated blowup as a semialgebraic graph in $\R^d$ as $d$ grows. We only know how to do this if the original graph that we blow up has an extra property. (See property $(i)$ in \cref{lem:ramsey-grid} below.)

\begin{proposition}
\label{thm:erdos-hajnal-upper}
For $d\geq 2$ even, for $D$ sufficiently large in terms of $d$, and for $N\geq D^d$, there exists an $N$-vertex semialgebraic graph in $\R^d$ with total degree $D$ that does not contain a clique or independent set of size at least
\[N^{\paren{4+o(1)}\frac{\log\log D}{d\log D}}\]
where the $o(1)$ term goes to zero for each fixed $d$ as $D$ goes to infinity.
\end{proposition}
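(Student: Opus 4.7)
The approach is to take an iterated blowup of a base graph with nearly optimal Ramsey properties and with a structured grid-like vertex set, allowing an efficient semialgebraic representation. I would first invoke \cref{lem:ramsey-grid} to obtain a semialgebraic graph $G_0$ in $\R^{d_0}$ whose vertex set is (essentially) a grid $[B]^{d_0}$, whose total degree is $D_0$, whose clique and independent set numbers are $O(\log |V(G_0)|)$, and which enjoys the structural property~(i) of the lemma. The parameters $d_0, D_0, B$ should be chosen so that $d_0 \mid d$, so that $r := d/d_0$ iterations fit within the total-degree budget, and so that $|V(G_0)| \approx D^{d/4}$.

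Next I would form the $r$-fold iterated blowup $G := G_0^{(r)}$, whose vertex set $V(G_0)^r$ embeds naturally into $\R^{d_0 r} = \R^d$. A routine induction on $r$ gives $\omega(G) \leq \omega(G_0)^r$ and $\alpha(G) \leq \alpha(G_0)^r$, each at most $(C \log |V(G_0)|)^r$ for a universal $C$. Substituting $\log |V(G_0)| \sim (d/4)\log D$ and noting that $r = \log N / \log |V(G_0)|$ in the regime $N \approx |V(G_0)|^r$, one computes $\log \omega(G) \leq r \log \log |V(G_0)| \cdot (1+o(1))$, which simplifies to $(4+o(1))(\log\log D / (d \log D))\log N$. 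If $N$ exceeds $|V(G_0)|^r$ for the natural choice of parameters, I would rescale by increasing $r$ while shrinking $d_0$ so that $d_0 r = d$ is preserved; the hypothesis $N \geq D^d$ keeps these adjustments internally consistent.

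The crucial step is verifying that $G_0^{(r)}$ really does admit a semialgebraic representation in $\R^d$ of total degree at most $D$. The adjacency relation of $G_0^{(r)}$ is a Boolean combination, indexed by the first differing block $i \in [r]$, of $d_0(i-1)$ equality conditions on the earlier blocks and a single $G_0$-adjacency check on block $i$. A naive implementation uses polynomials totaling degree $O(r(D_0 + d_0))$. The main obstacle will be making this fit tightly with the intended constant $4$ in the exponent, and this is precisely where property~(i) of \cref{lem:ramsey-grid} enters: it provides a grid-based encoding of $G_0$, analogous to the base-$B$ packing trick of Conlon, Fox, Pach, Sudakov, and Suk \cite{CFPSS14}, so that the $r$ levels of the blowup share polynomial structure and the total degree remains $O(D)$ rather than multiplying up with $r$ and spoiling the sharp exponent.
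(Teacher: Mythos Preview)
Your proposal has a genuine structural gap in the embedding. You set $r:=d/d_0$ and embed $V(G_0)^r$ ``naturally'' into $\R^{d_0 r}=\R^d$ by placing each level of the blowup into its own block of coordinates. This forces $r\le d$, so your graph has at most $|V(G_0)|^d$ vertices --- a quantity bounded in terms of $d$ and $D$ alone, not growing with $N$. Your rescaling remark (``increase $r$ while shrinking $d_0$'') cannot fix this, since $d_0\ge 1$. For the bound $(C\log|V(G_0)|)^r$ to be $N^{(4+o(1))\log\log D/(d\log D)}$ you need $r\sim \log N/\log|V(G_0)|$, which is unbounded as $N$ grows.

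The paper's proof resolves this by doing the opposite of separating the levels: it packs all $r$ levels into a \emph{fixed} number $2k=d$ of real coordinates via a base-$B$ encoding (each of the $k$ strands of $(S^k)^r$ is written once in base $B$ and once in base $B^2$). The Sidon property of $S$ lets one recover, from signs of degree-$2$ polynomials in these $2k$ coordinates, the unordered pair $\{a_{j,i},b_{j,i}\}$ at the first level $j$ where the $i$-th strand differs; comparisons of the base-$B^2$ coordinates determine which strand differs first. Property~(i) of \cref{lem:ramsey-grid} is not a ``grid-based encoding of $G_0$'' as you suggest; it is the symmetry condition guaranteeing that $G_0$-adjacency of $a_j,b_j$ depends only on $\Phi(a_j,b_j)$, which is exactly the information the Sidon-plus-base trick recovers. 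Note also that \cref{lem:ramsey-grid} provides an abstract graph, not a semialgebraic one; the semialgebraic representation is built for the blowup, not for $G_0$.
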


\begin{proof}
First define $d=2k$. The graph we produce will be an iterated blowup of the graph supplied by the following claim. For $a,b\in S^k$, define $\Phi(a,b)\in(S^2\cup\{\ast\})^k$ by $\Phi_i(a,b)=(a_i,b_i)$ if $a_i\neq b_i$ and $\Phi_i(a_i,b_i)=\ast$ otherwise.

\begin{claim}
\label{lem:ramsey-grid}
For a set $S$ of size $t\geq2$ and $k\geq 1$, there exists a graph $G=(S^k, E)$ with the following properties:
\begin{enumerate}[(i)]
    \item for vertices $a,b,c,d\in S^k$ with $\Phi(a,b)=\Phi(c,d)$ we have $\{a,b\}\in E$ if and only if $\{c,d\}\in E$; and
    \item any clique or independent set in $G$ has size less than $O_k(\log t)$.
\end{enumerate}
\end{claim}
\begin{proof}
Choose a graph $G$ that satisfies $(i)$ uniformly at random. We will show by induction on $k$ that for any subset $U$ of $S^k$, the probability that $G[U]$ is complete is upper bounded by $2^{1-\Omega_k(\abs{U}^2)}$. Consider a family of pairs $\{a,b\}$, such that $a_i\neq b_i$ for all $i\in[k]$ for each of these pairs. Note that each pair is an edge with probability $1/2$ and these events are mutually independent. Therefore, if there are at least $\abs{U}^2/4-1$ pairs $\{a,b\}\in \binom{U}{2}$ with $a_i\neq b_i$ for all $i\in[k]$, then we are done. In particular, we are done if $k=1$.
    
For the remaining case, there exists an $i\in[k]$ such that there are at least $\Omega_k(\abs{U}^2)$ pairs $\{a,b\}\in\binom{U}{2}$ with $a_i=b_i$. Picking some $v\in U$ that participates in $\Omega_k(|U|)$ of these pairs, we get that the set $U'=\{a\in U: a_i=v_i\}$ has size at least $\Omega_k(\abs{U})$. Note crucially that $G[U']$ has the same distribution as the graph constructed the same way on $S^{k-1}$. Therefore by the inductive hypothesis
\[\PP\left(G[U]\textup{ is a clique}\right)\leq \PP\left(G[U']\textup{ is a clique}\right) \leq 2^{1-\Omega_k(\abs{U'}^2)} \leq 2^{1-\Omega_k(\abs{U}^2)}.\]
This completes the induction.

Now let $C_k$ be some constant to be determined. The probability that there exists some clique in $G$ of size at least $C_k\log t$ can be upper bounded by 
\[\binom{t^k}{C_k\log t}2^{1-\Omega_k(C_k^2\log^2t)} \leq \exp\left(kC_k\log^2t+1-\Omega_k(C_k^2\log^2 t)\right).\]
The probability that there exists some clique or independent set of size at least $C_k\log t$ is at most twice this quantity. Therefore, as long as $C_k$ is chosen sufficiently sufficiently large with respect to the implicit constant, this probability is less than $1$. As a consequence, there is some $G$ satisfying $(i)$ and $(ii)$.
\end{proof}

For an appropriate value of $t\geq 2$ let $S\subset \{1,\ldots,2t^2\}$ be a Sidon set of size $t$. Let $G_0=(S^k,E_0)$ be a graph that satisfies the two conditions in \cref{lem:ramsey-grid}. For an appropriate value of $r\geq 1$, let $G$ be the $r$-fold iterated blowup of $G_0$. This means that $G$ has vertex set $(S^k)^r$ where $(a_1,\ldots,a_r)$ is adjacent to $(b_1,\ldots, b_r)$ if there exists $i$ such that $a_j=b_j$ for $j<i$ and $a_i$ is adjacent to $b_i$ in $G_0$. For any clique or independent set in $G$, its projection onto the first coordinate is a clique or independent set in $G_0$. By induction on $r$, this implies that $G$ does not contain a clique or independent set of size $(C_k\log t)^r$ for some constant $C_k$ depending only on $k$.

Now we claim that $G$ is a semialgebraic graph in $\R^d$ with total degree $4kt^2+2k^2$. Define $B=20t^2$. For each vertex $a\in (S^k)^r$, represent $a$ by the point $(x_{a,1},\ldots,x_{a,k},y_{a,1},\ldots,y_{a,k})\in\R^{2k}$ where
\[x_{a,i}=\sum_{j=1}^r a_{j,i} B^{r-j}\qquad\text{and}\qquad y_{a,i}=\sum_{j=1}^r a_{j,i} B^{2(r-j)}.\]
In other words, we view $a$ as a $k$-tuple of $r$-digit numbers and we view each of these numbers as either written in base $B$ or base $B^2$.

For each pair $p>q$ of elements of $S$ and each $i\in[k]$, consider the polynomials
\begin{align*}
(x_i-x_i')^2-\paren{p-q+\frac12}(y_i-y_i')&\\
(x_i-x_i')^2-\paren{p-q-\frac12}(y_i-y_i')&\\
(x_i-x_i')^2+\paren{p-q-\frac12}(y_i-y_i')&\\
(x_i-x_i')^2+\paren{p-q+\frac12}(y_i-y_i')&
\end{align*}
as well as, for each $i\in[k]$, the polynomial
\[y_i-y_i',\]
and finally, for each $1\leq i_1<i_2\leq k$, the polynomials
\begin{align*}
(y_{i_1}-y'_{i_1})-B(y_{i_2}-y'_{i_2})&\\
(y_{i_1}-y'_{i_1})+B(y_{i_2}-y'_{i_2})&\\
B(y_{i_1}-y'_{i_1})-(y_{i_2}-y'_{i_2})&\\
B(y_{i_1}-y'_{i_1})+(y_{i_2}-y'_{i_2})&
\end{align*}

These polynomials have total degree $k\paren{8\binom t2+1}+4\binom k2<4kt^2+2k^2$. We now claim that one can determine if $\{a,b\}$ is an edge of $G$ from the signs of these polynomials evaluated at $(x_1,\ldots,x_k,y_1,\ldots,y_k,x'_1,\ldots,x'_k,y'_1,\ldots,y'_k)=(x_{a,1},\ldots,x_{a,k},y_{a,1},\ldots,y_{a,k},x_{b,1},\ldots,x_{b,k},y_{b,1}\ldots,y_{b,k})$.

First, for each $i\in [k]$, if there exists a pair $(p_i,q_i)$ of elements of $S$ such that
\[|p_i-q_i|-\frac12<\abs{\frac{(x_{a,i}-x_{b,i})^2}{y_{a,i}-y_{b,i}}}<|p_i-q_i|+\frac12,\]
then one can determine $|p_i-q_i|$. To see why this is useful, define $j_i$ to be the minimum positive integer such that $a_{j_i,i}\neq b_{j_i,i}$. Suppose that $a_{j_i,i}>b_{j_i,i}$. Then
\[(a_{j_i,i}-b_{j_i,i})B^{r-j_i}-2t^2 B^{r-j_i-1}\leq x_{a,i}-x_{b,i}\leq (a_{j_i,i}-b_{j_i,i})B^{r-j_i}+2t^2 B^{r-j_i-1}\]
and
\[(a_{j_i,i}-b_{j_i,i})B^{2(r-j_i)}-2t^2 B^{2(r-j_i)-1}\leq y_{a,i}-y_{b,i}\leq (a_{j_i,i}-b_{j_i,i})B^{2(r-j_i)}+2t^2 B^{2(r-j_i)-1}.\]
In particular,
\[\frac{(x_{a,i}-x_{b,i})^2}{y_{a,i}-y_{b,i}}\leq\frac{\paren{(a_{j_i,i}-b_{j_i,i})B^{r-j_i}+2t^2 B^{r-j_i-1}}^2}{(a_{j_i,i}-b_{j_i,i})B^{2(r-j_i)}-2t^2 B^{2(r-j_i)-1}}=\frac{\paren{a_{j_i,i}-b_{j_i,i}+\frac1{10}}^2}{a_{j_i,i}-b_{j_i,i}-\frac1{10}}<a_{j_i,i}-b_{j_i,i}+\frac12.\]
A similar lower bound holds.
Therefore if such a pair $(p_i,q_i)$ exists, then $p_i-q_i = a_{j_i,i}-b_{j_i,i}$. Since $S$ is a Sidon set, one can then determine the pair $(a_{j_i,i},b_{j_i,i})$ from $a_{j_i,i}-b_{j_i,i}$. 

Next we claim that for each $1\leq i_1<i_2\leq k$, we can determine which of $j_{i_1}<j_{i_2}$ or $j_{i_1}=j_{i_2}$ or $j_{i_1}>j_{i_2}$ holds. To see this, notice that
\[\abs{\frac{y_{a,i_1}-y_{b,i_1}}{y_{a,i_2}-y_{b,i_2}}}\geq\frac{\abs{a_{j_{i_1},i_1}-b_{j_{i_1},i_1}}-2t^2B^{-1}}{\abs{a_{j_{i_2},i_2}-b_{j_{i_2},i_2}}+2t^2B^{-1}}\cdot B^{2(j_{i_2}-j_{i_1})}\geq\frac{1-\frac{1}{10}}{2t^2+\frac{1}{10}} B^{2(j_{i_2}-j_{i_1})}>B^{2(j_{i_2}-j_{i_1})-1}\]
and similarly
\[\abs{\frac{y_{a,i_1}-y_{b,i_1}}{y_{a,i_2}-y_{b,i_2}}}\leq \frac{2t^2+\frac{1}{10}}{1-\frac{1}{10}}B^{2(j_{i_2}-j_{i_1})}<B^{2(j_{i_2}-j_{i_1})+1}.\]
Hence we see that
\begin{align*}
\abs{\frac{y_{a,i_1}-y_{b,i_1}}{y_{a,i_2}-y_{b,i_2}}}>B\qquad&\text{if }j_{i_1}<j_{i_2},\\
B^{-1}<\abs{\frac{y_{a,i_1}-y_{b,i_1}}{y_{a,i_2}-y_{b,i_2}}}<B\qquad&\text{if }j_{i_1}=j_{i_2},\\
\abs{\frac{y_{a,i_1}-y_{b,i_1}}{y_{a,i_2}-y_{b,i_2}}}<B^{-1}\qquad&\text{if }j_{i_1}>j_{i_2}.
\end{align*}

Therefore from the signs of the above polynomials, one can determine the relative order of $j_1,\ldots,j_k$. Combining this with the above determination of $a_{j_1,1},\ldots,a_{j_k,k},b_{j_1,1},\ldots,b_{j_k,k}$, we have determined the value of $\Phi(a_j,b_j)$, where $j=\min\{j_1,\ldots,j_k\}$. By the choice of $G_0$, whether $\{a_j,b_j\}$ is an edge of $G_0$ is determined by $\Phi(a_j,b_j)$ and by the definition of an iterated blowup, whether $\{a,b\}$ is an edge of $G$ is determined from this. 

Now define $t=\floor{\tfrac12\sqrt{\tfrac {D-2k^2}k}}$ and $r=\floor{\tfrac{\log N}{k\log t}}$. Note that $r\geq 1$ and $t$ is sufficiently large by our assumptions on $N,D$. Then the graph $G$ constructed above is a semialgebraic graph in $\R^d$ on at most $N$ vertices with total degree at most $D$. Furthermore, $G$ does not contain a clique or independent set of size
\[\begin{split}
(C_k\log t)^r
&= \exp\paren{r(\log\log t+\log C_k)}
\leq \exp\paren{\frac{\log N}{k\log t}(1+o(1))\log\log D}\\
&\leq\exp\paren{(1+o(1))\frac{\log N\log\log D}{\tfrac12k\log D}}
=N^{(4+o(1))\frac{\log\log D}{d\log D}}.\qedhere
\end{split}\]
\end{proof}

\subsection{Property testing}
Property testing is an area of theoretical computer science that aims to find extremely fast randomized algorithms that determine if an object has some property or is far from satisfying the property. Typically the goal is to find an algorithm that only makes a constant number of queries to the object. In the dense graph model one can query if a pair of vertices form an edge or not. We say that a property $\cP$ is testable if for all $\epsilon>0$ there exists an algorithm that makes $O_\epsilon(1)$ queries to a graph $G$ and, if $G\in\cP$, then the algorithm accepts, and if $G$ is $\epsilon$-far from $\cP$, then the algorithm rejects with probability at least 2/3. We say that an $N$-vertex graph is $\epsilon$-close to $\cP$ if by adding or removing at most $\epsilon N^2$ edges, $G$ can be turned into an element of $\cP$; otherwise $G$ is $\epsilon$-far from $\cP$.

Alon and Shapira classified the testable graph properties as exactly those that satisfy a combinatorial property known as being semi-hereditary \cite{AS08}. This includes monotone properties, such as $H$-freeness, as well as hereditary properties, such as induced $H$-freeness, as well as many additional properties which may be more pathological. The strength of this result lies in the fact that the number of queries is independent of the size of the graph. However, it can an extremely large function of $\epsilon$. For properties such as $H$-freeness and induced $H$-freeness the number of queries has tower-type growth as a function of $\epsilon$. For arbitrary semi-hereditary properties, the number of queries can grow essentially arbitrarily quickly for some pathological properties.

When restricted to the class of semialgebraic graphs and hypergraphs of bounded dimension and total degree, the situation is quantitatively nicer. Properties such as $H$-freeness and induced $H$-freeness can be tested with $\epsilon^{-O(1)}$ queries while general hereditary properties can be testing with a number of queries that is polynomial in a function that captures the structure of the property \cite{FPS16}. These results also generalize exactly to hypergraphs. All of these results follow from Fox, Pach, and Suk's regularity lemma for semialgebraic hypergraphs. With our quantitatively improved regularity lemma, we can improve the number of queries required. For simplicity, we only carry out this analysis for $H$-freeness, though we expect our results to be able to say something about the more general problem.

\begin{proposition}
Let $H$ be a $k$-uniform hypergraph. In the class of $k$-uniform semialgebraic hypergraphs in $\R^d$ with total degree at most $D$, the property of $H$-freeness can be tested with $O_{d,k,|H|}(\epsilon^{-(d+1)|H|}D^{d|H|})$ queries. 
\end{proposition}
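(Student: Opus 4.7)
The plan follows the standard regularity-based property tester, with the oblivious version of our regularity lemma playing the central role: since \cref{thm:main-equitable} produces a partition that works uniformly over all hypergraphs of the given complexity, the tester need not compute anything from the input, it just samples random $|H|$-tuples and checks them for copies of $H$. Let $K = O_{d,k}(D^{d}\epsilon^{-(d+1)})$ be the bound on the partition size coming from \cref{thm:main-equitable} applied with error parameter $\epsilon/(2 k!)$. The tester is: sample $m = \Theta_{|H|}(K^{|H|})$ independent uniformly random ordered $|H|$-tuples from $V^{|H|}$, query the $\binom{|H|}{k}$ edge slots of each, reject if any sampled tuple yields a (not necessarily induced) copy of $H$, and otherwise accept. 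The total number of queries is $m\binom{|H|}{k} = O_{d,k,|H|}(K^{|H|}) = O_{d,k,|H|}(\epsilon^{-(d+1)|H|}D^{d|H|})$, matching the claimed bound.

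If $G$ is $H$-free the tester accepts deterministically. For soundness, assume $G$ is $\epsilon$-far from $H$-free. View $G$ via its symmetric $k$-partite avatar $G'$ on $V\sqcup\cdots\sqcup V$ and apply \cref{thm:main-equitable} to $k$ copies of $V$ with error $\epsilon/(2k!)$; by obliviousness all $k$ partitions are the same equitable partition $\Pi$ of $V$ into $K$ parts, and $\Pi^{k}$ is a homogeneous partition of $G'$ with error $\epsilon/(2k!)$. Delete from $G$ every unordered edge whose ordered $k$-tuples lie in a non-homogeneous box of $\Pi^{k}$; this removes at most $\epsilon/(2k!)$ of the $N^{k}$ ordered tuples and hence at most $\epsilon\binom{N}{k}/2$ edges. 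Since $G$ is $\epsilon$-far from $H$-free, the cleaned hypergraph $G_{\mathrm{clean}}$ still contains a copy of $H$ on some labeled vertices $u_{1},\ldots,u_{|H|}$; let $\pi_{i}\in\Pi$ be the part containing $u_{i}$.

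The key blow-up step: for every edge $\{w_{i_{1}},\ldots,w_{i_{k}}\}$ of $H$ the ordered tuple $(u_{i_{1}},\ldots,u_{i_{k}})$ is an edge of $G'_{\mathrm{clean}}$ and lies in a homogeneous box, so the box $(\pi_{i_{1}},\ldots,\pi_{i_{k}})$ must be \emph{complete} in $G'$. Consequently, for any choice of $(u_{1}',\ldots,u_{|H|}') \in \pi_{1}\times\cdots\times\pi_{|H|}$ with pairwise distinct entries, $(u_{1}',\ldots,u_{|H|}')$ is a labeled copy of $H$ in $G$. Group the index set $[|H|]$ by which part of $\Pi$ each labeled vertex falls into; since $\Pi$ is equitable, a group of size $s$ contributes at least $\lfloor N/K\rfloor(\lfloor N/K\rfloor -1)\cdots(\lfloor N/K\rfloor-s+1)\gtrsim_{|H|}(N/K)^{s}$ orderings of distinct elements once $N\gtrsim_{|H|} K$. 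Multiplying over the groups gives at least $\Omega_{|H|}((N/K)^{|H|})$ labeled copies of $H$ in $G$, so a random ordered $|H|$-tuple hits a copy with probability $\Omega_{|H|}(K^{-|H|})$, and $m=\Theta_{|H|}(K^{|H|})$ samples drive the failure probability below $1/3$. The degenerate regime $N \lesssim_{|H|} K$ is absorbed into the $O_{d,k,|H|}(\cdot)$ by querying all $\binom{N}{k}$ edges.

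The main point that needs care is this last blow-up count: one must verify that a single copy of $H$ in $G_{\mathrm{clean}}$ forces $\Omega_{|H|}((N/K)^{|H|})$ labeled copies in $G$ even when several labels of $H$ happen to collide into the same part of $\Pi$. Equitability of $\Pi$ together with the completeness of each of the $|E(H)|$ relevant $k$-boxes makes this a clean counting exercise once one partitions the label set $[|H|]$ by the part assignment, and it is exactly here that the oblivious, equitable form of \cref{thm:main-equitable} is used rather than the raw \cref{thm:main}.
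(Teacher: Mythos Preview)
Your proof is correct and follows essentially the same strategy as the paper: apply the oblivious regularity partition to (copies of) $V$, clean out the non-homogeneous boxes, find a surviving copy of $H$, and blow it up inside its parts to get $\Omega_{|H|}((\text{part size}/N)^{|H|})$ density of labeled copies. The only real difference is packaging: the paper routes the argument through an explicit removal lemma and uses the \emph{non-equitable} partition from \cref{thm:main} (with $O_{d,k}((D/\epsilon)^d)$ parts), then additionally deletes all edges touching parts of size below $\epsilon N/(2|\Pi|k)$ before doing the blow-up count; you instead invoke the equitable \cref{thm:main-equitable} (with $O_{d,k}(D^d\epsilon^{-(d+1)})$ parts) so that every part automatically has size $\geq \lfloor N/K\rfloor$ and no small-part cleanup is needed. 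Either way the product of ``number of parts'' times ``inverse minimum part fraction'' is $O_{d,k}(D^d\epsilon^{-(d+1)})$, so both routes land on the same query bound $O_{d,k,|H|}(\epsilon^{-(d+1)|H|}D^{d|H|})$.
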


There is a close relationship between property testing results and removal lemmas. Via standard arguments, the above proposition follows from (and is essentially equivalent to) the upcoming removal lemma. The property testing algorithm is very simple. Sample $O_{d,k,|H|}(\epsilon^{-(d+1)|H|}D^{d|H|})$ random injections from $H$ into the target hypergraph $G$ and check if any of them correspond to a copy of $H$ in $G$. If any do, the algorithm rejects; otherwise it accepts. The following removal lemma shows the correctness of this algorithm.

\begin{proposition}
For $\epsilon>0$, there exists $\delta=\Omega_{d,k,n}(\epsilon^{(d+1)n}D^{-dn})$ such that the following holds. Let $H$ be an $n$-vertex $k$-uniform hypergraph and let $G$ be an $N$-vertex $k$-uniform semialgebraic hypergraph in $\R^d$ of total degree at most $D$. If $G$ has fewer than $\delta N^n$ labelled copies of $H$, then $G$ can be made $H$-free by removing at most $\epsilon N^k$ edges.
\end{proposition}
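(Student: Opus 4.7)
The plan is to deduce the removal lemma from the oblivious regularity lemma, \cref{thm:main-equitable}, via a standard regularize-and-clean argument, with the error parameter tuned so that cleaning removes at most $\epsilon N^k$ edges while a single surviving copy of $H$ blows up to the required $\delta N^n$ copies.

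First, I would apply \cref{thm:main-equitable} to $P=V(G)\subset\R^d$ with error parameter $\epsilon'=c_{d,k,n}\epsilon$ for a sufficiently small constant $c_{d,k,n}$. Since the partitioning procedure of \cref{thm:main-partition} depends only on the input point set, a single equitable partition $\Pi$ of $V(G)$ into $M=O_{d,k}(D^d{\epsilon'}^{-(d+1)})$ parts can be reused on all $k$ coordinates of the symmetric $k$-partite form $\bar G$ of $G$; the counting step in the proof of \cref{thm:main} then certifies that $(\Pi,\ldots,\Pi)$ is a homogeneous partition of $\bar G$ with error $\epsilon'$. Next, I would form the cleaned hypergraph $G'\subseteq G$ by deleting every edge $\{u_1,\ldots,u_k\}$ for which $(A_{u_1},\ldots,A_{u_k})$ is a non-homogeneous ordered $k$-tuple of parts of $\Pi$, where $A_u\in\Pi$ denotes the part containing $u$; by symmetry of $\bar G$ this choice is ordering-independent, and the total count of deleted edges is at most $\epsilon' N^k\leq\epsilon N^k$.

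If $G'$ happens to be $H$-free, we are done, so suppose not and fix a labelled copy $\phi\colon V(H)\to V(G')$. For each $i\in[n]$ set $A_i=A_{\phi(v_i)}$. For every edge $\{v_{i_1},\ldots,v_{i_k}\}\in E(H)$, the tuple $(A_{i_1},\ldots,A_{i_k})$ is homogeneous (otherwise $\phi$'s image would have been removed) and contains the ordered edge $(\phi(v_{i_1}),\ldots,\phi(v_{i_k}))$ of $\bar G$, so it is in fact complete. Hence every injection $\phi'\colon V(H)\to V(G)$ with $\phi'(v_i)\in A_i$ for all $i$ is automatically a labelled copy of $H$ in $G$.

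It remains to count these injections. When $N\geq C_n M$ for an appropriate $C_n=O(n^2)$, equitability gives $|A_i|\geq N/(2M)$, and the number of such injections is at least
\[\prod_{i=1}^n|A_i|-\binom{n}{2}N^{n-1}\geq\paren{\frac{N}{4M}}^n=\Omega_{d,k,n}\paren{\epsilon^{(d+1)n}D^{-dn}N^n},\]
which is the required $\delta N^n$. In the opposite regime $N<C_n M$, the target $\delta=\Omega_{d,k,n}(\epsilon^{(d+1)n}D^{-dn})$ already satisfies $\delta N^n<1$, so the hypothesis ``fewer than $\delta N^n$ copies'' forces $G$ to be $H$-free outright and there is nothing to remove. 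The only genuinely delicate point is verifying that reusing one partition across all $k$ coordinates of $\bar G$ does not invalidate the homogeneity argument in \cref{thm:main}; this is immediate because \cref{thm:number-of-bad-tuples} is stated for arbitrary tuples of cells, making no assumption that the underlying partitions on different coordinates are distinct.
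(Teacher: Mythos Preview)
Your argument is the paper's own regularize-and-clean proof, with the cosmetic variation that you invoke the equitable regularity lemma \cref{thm:main-equitable} instead of applying \cref{thm:main-partition} directly and then additionally discarding edges that touch parts smaller than $\epsilon N/(2|\Pi|k)$; both variants land on the same $\delta$.

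There is one arithmetic slip. The displayed inequality
\[\prod_{i=1}^n|A_i|-\binom{n}{2}N^{n-1}\geq\paren{\frac{N}{4M}}^n\]
does \emph{not} follow from $N\geq C_nM$ with $C_n=O(n^2)$: already for $n=2$, $M=100$, $N=10^4$ the left side is $50^2-10^4<0$. The cure is to use equitability in the collision term as well. Since the $A_i$ are parts of a partition, a map with $\phi'(v_i)=\phi'(v_j)$ forces $A_i=A_j$, and then the number of such maps is $\prod_l|A_l|/|A_j|\leq (2M/N)\prod_l|A_l|$. Hence the injections number at least $\bigl(1-\binom{n}{2}\tfrac{2M}{N}\bigr)\prod_l|A_l|$, and now $N\geq 2n^2M$ genuinely yields the claimed $(N/(4M))^n$. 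With this correction your proof is complete.
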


\begin{proof}
Let $G=(V,E)$. We apply multilevel polynomial partitioning to $V$ with parameter $A=C_{d,k}D/\epsilon$. This produces a partition $\Pi$ of $V$ with $O_{d,k}((D/\epsilon)^d)$ parts. By the same argument as in the proof of \cref{thm:main}, all but an $(\epsilon/2)$-fraction of $k$-tuples of vertices lie in a $k$-tuple of parts which is homogeneous. (To be clear, we say that a $k$-tuple of not necessarily distinct parts $(\pi_1,\ldots,\pi_k)$ is complete if every $k$-tuples of vertices $(v_1,\ldots,v_k)\in\pi_1\times\cdots\times\pi_k$ for which $v_1,\ldots, v_k$ are distinct is an edge and the analogous definition for an empty $k$-tuple.)

Now we define a ``cleaned up'' hypergraph $\tilde G$ on the same vertex set of $G$. We produce $\tilde G$ by first removing any edge that lies an a $k$-tuple of parts that is not homogeneous. We also remove any edge that lies in a $k$-tuple of parts $(\pi_1,\ldots,\pi_k)$ where any of the parts have size less than $\epsilon N/(2|\Pi|k)$. In total we have removed at most $\epsilon N^k$ edges of $G$. 

We prove the contrapositive. Suppose that $G$ cannot be made $H$-free by removing at most $\epsilon N^k$ edges. Then $\tilde G$ must contain a copy of $H$, say between parts $\pi_1,\ldots,\pi_n$. By the construction of $\tilde G$, we know that if vertex $i$ of $H$ is non-isolated then $|\pi_i|\geq\epsilon N/(2|\Pi|k)$ and for each edge $\{i_1,\ldots,i_k\}$ of $H$, the tuple $(\pi_{i_1},\ldots,\pi_{i_k})$ is complete in $G$. Now we can lower bound the number of labelled copies of $H$ in $G$ by the number of copies where each non-isolated vertex of $H$ maps into the corresponding part of the partition (and the isolated vertices map into arbitrary parts). This number is at least
\[\paren{\frac{\epsilon N}{2|\Pi|k}}\paren{\frac{\epsilon N}{2|\Pi|k}-1}\cdots\paren{\frac{\epsilon N}{2|\Pi|k}-(n-1)}\geq \delta N^n.\]
The last inequality holds for an appropriate $\delta=\Omega_{d,k,n}(\epsilon^{(d+1)n}D^{-dn})$ if $N\geq N_0$ where we define $N_0=4kn|\Pi|\epsilon^{-1}$. For $N<N_0$, the desired statement holds for $\delta=N_0^{-n}$ for all $\epsilon$, completing the proof.
\end{proof}

\section{Lower bound constructions}
\label{sec:lower-bound}

We give three examples of semialgebraic graphs of increasing complexity. The first shows that the regularity lemma, \cref{thm:main}, has optimal $\epsilon$-dependence, while the third shows that the dependence on both $\epsilon$ and $D$ is optimal for all $d$.

\begin{example}
\label{thm:eps-example}
For every $m,d\geq 1$, let $P_1=P_2=[m]^d$. Define the graph $G=(P_1\sqcup P_2,E)$ where $x$ is adjacent to $y$ if $x_i=y_i$ for some $i\in[d]$. This graph is semialgebraic in $\R^d$ of total degree $d$. 
\end{example}

For any $\epsilon>0$ and $m$ sufficiently large in terms of $d,\epsilon$, we will show that any homogeneous partition of the above graph with error $\epsilon$ must have $\Omega_d(\epsilon^{-d})$ parts. This shows that the $\epsilon$-dependence in \cref{thm:main} is optimal.

For our next examples, for each $k\geq1$ we define real numbers $a_0,\ldots,a_{k-1}\in[0,1)$ as follows. Let $p$ be the smallest prime greater than $k$. Then define $a_i\in[0,1)$ so that $i^2/p\equiv a_i\pmod 1$. 

\begin{example}
\label{thm:D-eps-example-1d}
For $D,m\geq 1$, let $a_0,\ldots,a_{D-1}\in[0,1)$ be the real numbers defined above. Set $P_1=[m]$ and $P_2=[2m]$. Define the graph $G=(P_1\sqcup P_2,E)$ where $x$ is adjacent to $y$ if an even number of the inequalities
\[y-x-\frac{im}{D}-\frac{a_im}{3D}\geq 0\qquad\text{for }0\leq i<D\]
hold. This graph is semialgebraic in $\R^1$ of total degree $D$.
\end{example}

The edges in $G$ correspond to pairs $(x,y)$ lying in the black region in the following diagram.

\begin{center}
\begin{tikzpicture}[scale = 2]
\def\a{0.3};
\def\b{0};
\def\c{0.1};
\def\d{0.3};
\def\e{0.4};
\def\f{0.2};
\draw (0,0) -- (1,0) -- (1,2) -- (0,2) -- cycle;
\filldraw (0,0/6+\a/18) -- (1,1+0/6+\a/18) -- (1,1+1/6+\b/18) -- (0,0+1/6+\b/18) -- cycle;
\filldraw (0,2/6+\c/18) -- (1,1+2/6+\c/18) -- (1,1+3/6+\d/18) -- (0,0+3/6+\d/18) -- cycle;
\filldraw (0,4/6+\e/18) -- (1,1+4/6+\e/18) -- (1,1+5/6+\f/18) -- (0,0+5/6+\f/18) -- cycle;
\end{tikzpicture}
\end{center}

For any $\epsilon>0$, any $D\geq 1$, and any $m$ sufficiently large in terms of $D,\epsilon$, we will show that any homogeneous partition of the above graph with error $\epsilon$ must have $\Omega(D/\epsilon)$ parts, showing that the $\epsilon$- and the $D$-dependence in \cref{thm:main} are both optimal when $d=1$. This also generalizes to higher dimensions.

\begin{example}
\label{thm:D-eps-example}
For $D,m,d\geq 1$, set $k=\floor{D/d}$. Let $a_0,\ldots,a_{k-1}\in[0,1)$ be the real numbers defined above. Set $P_1=[m]^d$ and $P_2=[2m]^d$. Define the graph $G=(P_1\sqcup P_2,E)$ where $x$ is adjacent to $y$ if an even number of the inequalities
\[y_j-x_j-\frac{im}{k}-\frac{a_im}{3k}\geq 0\qquad\text{for }0\leq i<k\text{ and }j\in[d]\]
hold. This graph is semialgebraic in $\R^d$ of total degree $kd\leq D$. This graph can either be written in terms of $kd$ polynomials of degree 1 or in terms of a single polynomial of degree $kd$.
\end{example}

For any $\epsilon>0$, any $D\geq 2d$, and any $m$ sufficiently large in terms of $D,\epsilon,d$, we will show that any homogeneous partition of the above graph with error $\epsilon$ must have $\Omega_d((D/\epsilon)^d)$ parts, showing that both the $\epsilon$- and the $D$-dependencies in \cref{thm:main} are optimal.

\begin{proposition}
For every $d\geq 1$ and every $\epsilon>0$, for $m$ sufficiently large, the graph in \cref{thm:eps-example} has the property that any homogeneous partition with error $\epsilon$ must have at least $(2\epsilon)^{-d}$ parts.
\end{proposition}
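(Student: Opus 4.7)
The plan is to project each part of the partition onto each coordinate, use the scarcity of edges in $G$ to conclude that almost all pairs of vertices lie in an empty homogeneous pair of parts, and then apply AM--GM and Jensen's inequality. Let $\Pi_1,\Pi_2$ be partitions forming a homogeneous partition with error $\epsilon$, and set $N=|\Pi_1|$. For a part $\pi$ and coordinate $i\in[d]$, let $A_i(\pi)\subseteq[m]$ denote the projection of $\pi$ onto the $i$-th coordinate and write $a_i(\pi)=|A_i(\pi)|/m$. Let $\pi_1(x)\in\Pi_1$ denote the part containing $x\in P_1$.

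The first (and main) step is to show that
\[
\E_x\sqb{\prod_{i=1}^{d}\paren{1-a_i(\pi_1(x))}}\geq 1-2\epsilon,
\]
where $x$ is drawn uniformly from $P_1=[m]^d$. By inclusion--exclusion $G$ has at most $dm^{2d-1}$ edges, so for $m\geq d/\epsilon$ at most an $\epsilon$-fraction of vertex pairs $(x,y)\in P_1\times P_2$ lies in a ``complete'' homogeneous pair of parts. Combined with the $\epsilon$-error budget for inhomogeneous pairs, at least a $(1-2\epsilon)$-fraction of $(x,y)$ lies in an empty homogeneous pair $(\pi_1(x),\pi_2(y))$. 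For each fixed $x$, any such $y$ must satisfy $y_i\notin A_i(\pi_1(x))$ for all $i$, so the fraction of acceptable $y$ is at most $\prod_i(1-a_i(\pi_1(x)))$; averaging over $x$ gives the inequality.

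The second step converts this into the desired bound on $N$. Since $\prod_i(1-a_i)\leq 1-a_j$ for each $j$, we get $\E_x[a_j(\pi_1(x))]\leq 2\epsilon$ for every $j$, hence $\E_x[\sum_j a_j(\pi_1(x))]\leq 2\epsilon d$. The inclusion $\pi\subseteq A_1(\pi)\times\cdots\times A_d(\pi)$ gives $|\pi|/m^d\leq \prod_i a_i(\pi)$, and AM--GM then yields $m^d/|\pi|\geq d^d/\paren{\sum_i a_i(\pi)}^d$. Noting that $N=\E_x[m^d/|\pi_1(x)|]$ by a direct calculation, we apply Jensen's inequality to the convex function $t\mapsto t^{-d}$ to conclude that
\[
N\geq d^d\,\E_x\sqb{\paren{\textstyle\sum_i a_i(\pi_1(x))}^{-d}}\geq d^d\paren{\E_x\sqb{\textstyle\sum_i a_i(\pi_1(x))}}^{-d}\geq d^d(2\epsilon d)^{-d}=(2\epsilon)^{-d}.
\]
The main conceptual work is in the first step, where one must translate the abstract homogeneity assumption into a concrete inequality about coordinate projections; the second step is then a routine application of AM--GM together with Jensen.
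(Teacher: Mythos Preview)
Your proof is correct. It shares the paper's core insight—that if $(\pi,\pi')$ is an empty pair then every $y\in\pi'$ satisfies $y_i\notin A_i(\pi)$ for all $i$—and likewise finishes with AM--GM plus Jensen, but the bookkeeping is organized differently. The paper packages the projection argument into a neighborhood expansion claim $|N(A)|\geq |A|^{1/d}m^{d-1}$, upper bounds the non-empty mass by $2\epsilon$, and then applies Jensen to $t\mapsto t^{1+1/d}$ on the part sizes $|A_i|$. You instead bound each coordinate separately to obtain $\E_x[\sum_j a_j(\pi_1(x))]\leq 2\epsilon d$, invoke the identity $N=\E_x[m^d/|\pi_1(x)|]$, and apply Jensen to $t\mapsto t^{-d}$. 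Your route is arguably a bit more streamlined since it avoids the intermediate neighborhood claim; the paper's formulation, on the other hand, isolates that claim as a reusable expansion lemma, which parallels the structure of the subsequent (harder) lower-bound proof in the same section.
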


\begin{proof}
Suppose $m\geq d\epsilon^{-(d+1)}$. Let $G$ be the graph in \cref{thm:eps-example}. For a set $A\subseteq P_1$, write $N(A)$ for the neighborhood of $A$ in $G$, i.e., the set of vertices in $P_2$ that are adjacent to at least one vertex in $A$. Also write $\pi_j(A)\subseteq [m]$ for the projection of $A$ onto the $j$th coordinate axis, i.e., the set of $a\in [m]$ such that there exists $x\in A$ with $x_j=a$.

\begin{claim}
\label{thm:g-expands}
For $A\subseteq P_1$,
\[|N(A)|\geq m^d-(m-|A|^{1/d})^d\geq |A|^{1/d}m^{d-1}.\]
\end{claim}
\begin{proof}
For $y\in P_2$, we have $y\not\in N(A)$ if and only if $y_j\not\in \pi_j(A)$ for all $j$. Thus we see that
\[|N(A)|
=m^d-\prod_{j=1}^d (m-|\pi_j(A)|)
\geq m^d-\paren{m-\frac{\sum_{j=1}^d|\pi_j(A)|}{d}}^d
\geq m^d-\paren{m-\paren{\prod_{j=1}^d|\pi_j(A)|}^{1/d}}^d.
\]
by two applications of the AM-GM inequality.

Since $A\subseteq \pi_1(A)\times\cdots\times\pi_d(A)$, we see that $\prod_{j=1}^d|\pi_j(A)|\geq|A|$, so $|N(A)|\geq m^d-(m-|A|^{1/d})^d$. To deduce the last inequality in the lemma statement, note that
\[1-\frac{|A|^{1/d}}m\geq \paren{1-\frac{|A|^{1/d}}m}^d\]
since the left-hand side is at most 1 and $d\geq 1$. Multiplying by $m^d$ and rearranging gives the desired inequality.
\end{proof}

Suppose for contradiction that $P_1=A_1\sqcup\cdots\sqcup A_K$ and $P_2=B_1\sqcup\cdots\sqcup B_K$ are partitions with $K< (2\epsilon)^{-d}$ parts that form a homogeneous partition of $G$ with error $\epsilon$. Now the graph $G$ is regular of degree $m^d-(m-1)^d\leq dm^{d-1}$. Therefore, if $E[A_i\times B_j]$ is complete, then $|A_i|,|B_j|\leq dm^{d-1}$. Thus
\[\sum_{i,j:E[A_i\times B_j]\text{ non-empty}}\frac{|A_i||B_j|}{m^{2d}}\leq\epsilon+\frac{K^2d^2}{m^2}\leq 2\epsilon,\]
where the last inequality follows from the assumption that $m$ is sufficiently large.
Furthermore, we can bound
\begin{align*}
\sum_{i,j:E[A_i\times B_j]\text{ non-empty}}\frac{|A_i||B_j|}{m^{2d}}
&\geq\sum_{i=1}^K \frac{|A_i||N(A_i)|}{m^{2d}}\\
&\geq\sum_{i=1}^K\frac{|A_i|^{1+\tfrac1d}m^{d-1}}{m^{2d}}\tag*{\tiny[\cref{thm:g-expands}]}\\
&=\frac{K}{m^{d+1}}\paren{\frac1K\sum_{i=1}^K|A_i|^{1+\tfrac1d}}\\
&\geq \frac{K}{m^{d+1}}\paren{\frac1K\sum_{i=1}^K|A_i|}^{1+\tfrac1d}\tag*{\tiny[Jensen's inequality]}\\
&=\frac1{K^{1/d}}
\end{align*}
Combining this lower bound with the previous upper bound, we conclude that $K\geq (2\epsilon)^{-d}$, as desired.
\end{proof}

We see that understanding the behavior of neighborhoods plays an important role in the proof.
In the proof that \cref{thm:D-eps-example} does not have small homogeneous partitions, we will show that neighborhoods of far-away points differ a lot, which will again be key in the proof.

\begin{proposition}
For every $D\geq 2d$ and every $\epsilon\in(0,1/1000)$, for $m$ sufficiently large, the graph in \cref{thm:D-eps-example} has the property that any homogeneous partition with error $\epsilon$ must have at least $(D/(32\epsilon d))^d$ parts.
\end{proposition}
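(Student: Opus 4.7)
The plan is to adapt the argument from the previous proposition (about \cref{thm:eps-example}). For each part $A_i$ and any two vertices $x,x'\in A_i$ we have $|N(x)\triangle N(x')|\leq h_i:=\sum_{\ell:(A_i,B_\ell)\text{ non-hom}}|B_\ell|$, since any homogeneous $B_\ell$ must see $x$ and $x'$ identically; combining with the error hypothesis yields $\sum_i|A_i|h_i\leq\epsilon|P_1||P_2|=\epsilon 2^dm^{2d}$. The heart of the argument will be a sharp lower bound on $|N(x)\triangle N(x')|$ exploiting the XOR structure of the edge indicator, converted via Jensen's inequality into a lower bound on $K$.

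The key expansion claim I plan to prove is that if $\delta:=\|x-x'\|_\infty\leq 2m/(3k)$ then $|N(x)\triangle N(x')|\geq k\delta(2m)^{d-1}$. Writing the edge indicator as $\bigoplus_j\phi_j(x_j,y_j)$ where $\phi_j(x_j,y_j)\in\{0,1\}$ records the parity of thresholds $\theta_i$ with $y_j-x_j\geq\theta_i$, the fraction of $y\in P_2$ lying in $N(x)\triangle N(x')$ equals $(1-\prod_j(1-2p_j))/2$, with $p_j$ the probability over $y_j\in[2m]$ that an odd number of $\theta_i$'s lies in the interval $(y_j-x'_j,y_j-x_j]$. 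Since the gap between consecutive thresholds is at least $m/k-m/(3k)=2m/(3k)$, an interval of length at most $2m/(3k)$ contains at most one threshold, so $p_j=k|x_j-x'_j|/(2m)$ exactly. Each $p_j\leq 1/3$, so every factor $1-2p_j\in[1/3,1]$ and $(1-\prod_j(1-2p_j))/2\geq\max_jp_j=k\delta/(2m)$.

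For each part $A_i$ I will select a pair realizing $\delta_i^\ast:=\min(\mathrm{diam}_\infty(A_i),2m/(3k))$. When $\mathrm{diam}_\infty(A_i)\leq 2m/(3k)$ this is immediate and $\delta_i^\ast\geq|A_i|^{1/d}/2$ holds since $A_i$ fits into an axis-aligned box of side $\mathrm{diam}_\infty(A_i)+1$. When $\mathrm{diam}_\infty(A_i)>2m/(3k)$ I will argue separately that the total $|A_i|$-mass of these ``spread-out'' parts is at most $O(\epsilon m^d)$: any such part contains $x,x'$ with $|x_{j^\ast}-x'_{j^\ast}|>2m/(3k)$ in some coordinate $j^\ast$, and the Sidon-type property of $\{a_i\}$ prevents systematic cancellation of threshold parities across long intervals, so that $|N(x)\triangle N(x')|\geq c(2m)^d$ for an absolute constant $c>0$; then $h_i\geq c(2m)^d$ for each such part and the global bound $\sum_i|A_i|h_i\leq\epsilon 2^dm^{2d}$ forces their total mass to be at most $\epsilon m^d/c$. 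For $\epsilon<c/2$ the ``regular'' parts therefore carry mass at least $m^d/2$.

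Combining, for the regular parts $\sum_{\mathrm{reg}}|A_i|\cdot k\delta_i^\ast(2m)^{d-1}\leq\epsilon 2^dm^{2d}$, and substituting $\delta_i^\ast\geq|A_i|^{1/d}/2$ yields $\sum_{\mathrm{reg}}|A_i|^{1+1/d}\lesssim\epsilon m^{d+1}/k$. The power-mean (Jensen) inequality lower-bounds the left side by $(m^d/2)^{1+1/d}/K^{1/d}$, giving $K^{1/d}\gtrsim k/\epsilon$ and hence $K\gtrsim(k/\epsilon)^d\geq(D/(32\epsilon d))^d$ via $k\geq D/(2d)$ for $D\geq 2d$. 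The step I expect to be most delicate is verifying $|N(x)\triangle N(x')|\geq c(2m)^d$ uniformly in the spread-out case: this is where the Sidon-type property of $\{a_i\}$ plays an essential role, preventing accidental cancellations in the parity count across long intervals that could otherwise force the XOR of the $\phi_j$'s to be close to constant.
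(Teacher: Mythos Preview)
Your proposal is correct and takes essentially the same approach as the paper: both reduce to a one-dimensional estimate showing that $p_j$ (equivalently $|N_H(x_j)\triangle N_H(x'_j)|/(2m)$) is bounded away from both $0$ and $1$ when $|x_j-x'_j|$ is large---this is exactly where the Sidon property of the $a_i$ enters---and both finish via the same Jensen argument. The paper lifts the 1D bound to $\R^d$ by slicing $[2m]^d$ into axis-parallel lines (on each such line $N_G(x)\triangle N_G(x')$ restricts to either $N_H(x_j)\triangle N_H(x'_j)$ or its complement), which is equivalent to your product formula $(1-\prod_j(1-2p_j))/2$; note in particular that the two-sided 1D bound (away from $0$ \emph{and} from $1$) is needed in your formulation too, since otherwise an even number of factors $1-2p_j$ close to $-1$ could push the product back near $+1$.
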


\begin{proof}
Suppose $m\geq 4D\epsilon^{-1}$.  Let $G$ be the graph in \cref{thm:D-eps-example} with parameters $D,m,d$. Recall that $k=\floor{D/d}\geq 2$. Let $H$ be the graph in \cref{thm:D-eps-example-1d} with parameters $k,m$. We first show that the numbers $a_0,\ldots,a_{k-1}$ in the definitions of $G$ and $H$ are well-spaced in the following way.

\begin{claim}
\label{thm:a-well-spaced}
For each $1\leq \ell\leq k$, the number of pairs $(i,i')\in\{0,1\ldots,k-\ell-1\}^2$ with $i\neq i'$ and satisfying
\[|a_{i+\ell}-a_i-a_{i'+\ell}+a_{i'}|<\frac{1}{16}\]
is at most $k^2/4-k/2$.
\end{claim}
\begin{proof}
This is easy to check for $k=2,3$. Otherwise, we have $k<p<2k-2$ since $p$ is the smallest prime greater than $k$. Now if $(i,i')$ satisfies the above condition, then
\[a_{i+\ell}-a_i-a_{i'+\ell}+a_{i'}\equiv \frac{(i+\ell)^2-i^2-(i'+\ell)^2+i'^2}{p}\equiv\frac{2\ell(i-i')}{p}\pmod 1\]
is within $1/16$ of an integer. Since $2\ell\not\equiv 0\pmod p$, this implies that there are at most $2\floor{p/16}$ possible values of $i-i'\neq 0$ and thus at most $k(2\floor{p/16})\leq k(p-1)/8\leq k(2k-4)/8=k^2/4-k/2$ possible values for $(i,i')$.
\end{proof}

We next show that vertices that are far apart have very different neighborhoods in $H$ and also that these neighborhoods are not close to complementary.

\begin{claim}
\label{thm:1-dim-dist}
For $x,x'\in[m]$,
\[|N_H(x)\Delta N_H(x')|,2m-|N_H(x)\Delta N_H(x')|\geq \min\set{\frac{k(|x-x'|-1)}{2},\frac{m}{200}}.\]
\end{claim}
\begin{proof}
Define the two sets
\[S=\set{x+\frac{im}k+\frac{a_{i}m}{3k}:0\leq i<k},\]
and
\[T=\set{x'+\frac{im}k+\frac{a_{i}m}{3k}:0\leq i<k}.\]

Since $a_i\in[0,1)$, the elements of $S$ are each separated by at least $2m/3k$. The same holds for the elements of $T$. We claim that there are at most $k/2$ pairs $(i,i')\in\{0,\ldots,k-1\}^2$ such that the $i$-th element of $S$ and the $i'$-th element of $T$ are at distance smaller than $\min\{|x-x'|,m/(96k)\}$. For such a pair $(i,i')$, we have
\begin{equation}
\label{eq:i-i'-close}
\abs{\paren{x+\frac{im}k+\frac{a_im}{3k}}-\paren{x'+\frac{i'm}{k}+\frac{a_{i'}m}{3k}}}<\min\set{|x-x'|,\frac{m}{96k}}.
\end{equation}
Note that if $i=i'$, the above inequality would imply that $|x-x'|<|x-x'|$ which does not hold. Thus we have $i\neq i'$.

Now \cref{eq:i-i'-close} rearranges to the containment
\[\frac km(x-x')\in \paren{i'-i+\frac{a_{i'}-a_i}{3}-\frac{1}{96},i'-i+\frac{a_{i'}-a_i}{3}+\frac{1}{96}}\subset\paren{i'-i-\frac1{2},i'-i+\frac12}.\]
Thus we see that $i'-i=\ell\neq 0$ where $\ell$ is the closest integer to $(k/m)(x-x')$. Thus the original inequality implies that
\[\abs{x-x'-\frac{\ell m}{k}-\frac{(a_{i+\ell}-a_i)m}{3k}}<\frac{m}{96k}.\]
If $(i,i+\ell)$ and $(i',i'+\ell)$ both satisfy \cref{eq:i-i'-close}, we then have
\[|a_{i+\ell}-a_i-a_{i'+\ell}+a_{i'}|\leq \frac {3k}m\abs{x-x'-\frac{\ell m}{k}-\frac{(a_{i+\ell}-a_i)m}{3k}}-\frac {3k}m\abs{x-x'-\frac{\ell m}{k}-\frac{(a_{i'+\ell}-a_{i'})m}{3k}}<\frac{1}{16}.\]
Therefore by \cref{thm:a-well-spaced}, we see that there are at most $k/2$ pairs of elements of $S\sqcup T$ that are at distance smaller than $\min\{|x-x'|,m/(96k)\}$.

The $2k$ elements of $S\sqcup T$ divide the interval $[0,2m]$ into $2k+1$ subintervals. The first quantity, $|N_H(x)\Delta N_H(x')|$, is the number of points of $[2m]$ which lie in the even numbered subintervals. The second quantity, $2m-|N_H(x)\Delta N_H(x')|$ is the number of points which lie in the odd numbered subintervals. Thus we are summing the number of integer points in at least $k$ subintervals. Of all the subintervals, we have shown that at most $k/2$ have length strictly smaller than $\min\{|x-x'|,m/(96k)\}$ so we end up summing at least $k/2$ subintervals, each containing at least $\min\{|x-x'|,m/(96k)\}-1$ integer points. Therefore we have shown that
\[|N_H(x)\Delta N_H(x')|,2m-|N_H(x)\Delta N_H(x')|\geq\frac{k}2\paren{\min\set{|x-x'|,\frac m{96k}}-1}.\]
This proves the desired inequality since $m\geq 4000k$.
\end{proof}

Now we claim that for $x,x'\in [m]^d$ we have
\begin{equation}
\label{eq:d-dim-dist}
|N_G(x)\Delta N_G(x')|\geq (2m)^{d-1}\cdot\min\set{\frac{k(\|x-x'\|_\infty-1)}{2},\frac{m}{200}}.
\end{equation}
To see this, fix $j\in [d]$ such that $|x_j-x'_j|=\|x-x'\|_\infty$. For any $z\in[2m]^{[d]\setminus\{j\}}$, consider the line $L_z=\{y\in[2m]^d:y_i=z_i\text{ for all }i\in[d]\setminus\{j\}\}$. Clearly these $(2m)^{d-1}$ lines partition $P_2$. Furthermore, it follows from the definition of $G$ and $H$ that for each $z$ either $L_z\cap N_G(x)$ is $N_H(x_j)$ or its complement. Now by \cref{thm:1-dim-dist} we see that $L_z\cap (N_G(x)\Delta N_G(x'))$ is large for each $z$. Summing over $z$ gives \cref{eq:d-dim-dist}.

To complete the proof, suppose for contradiction that $P_1=A_1\sqcup\cdots\sqcup A_K$ and $P_2=B_1\sqcup\cdots\sqcup B_K$ are partitions with $K\leq (D/(32d\epsilon))^{-d}$ parts that form a homogeneous partition of $G$ with error $\epsilon$. Pick points $x_i,x'_i\in A_i$ that achieve $\max_{x_i,x'_i\in A_i}\|x_i-x'_i\|_\infty$. Note that $|A_i|\leq (\|x'_i-x_i\|_\infty+1)^d$ and also for any part $B_j$ with $y\in B_j$ such that $y\in N_G(x_i)\Delta N_G(x'_i)$, we know $E[A_i\times B_j]$ is not homogeneous. Thus
\begin{align*}
\epsilon 
& \geq \sum_{i,j:E[A_i\times B_j]\text{ not homogeneous}}\frac{|A_i||B_j|}{m^{d}(2m)^d}\\
& \geq \sum_i \frac{|A_i||N_G(x_i)\Delta N_G(x'_i)|}{m^{d}(2m)^d}\\
& \geq \sum_i \min\set{\frac{(2m)^{d-1}|A_i|k(\|x'_i-x_i\|_\infty-1)}{2m^{d}(2m)^d},\frac{(2m)^{d-1}|A_i|m}{200m^d(2m)^d}}\tag*{\tiny[by \cref{eq:d-dim-dist}]}\\
& \geq \sum_i \min\set{\frac{k|A_i|(|A_i|^{1/d}-2)}{4m^{d+1}},\frac{|A_i|}{400m^d}}\\
& = \sum_i\min\set{\frac k4 \frac{|A_i|^{1+\tfrac1d}}{m^{d+1}},\paren{\frac{1}{400}+\frac k{2m}} \frac{|A_i|}{m^d}}-\frac k{2m}.
\end{align*}
Write $[K]=I\sqcup J$ where $i\in I$ if the first term achieves the minimum and $i\in J$ otherwise. Recalling that $m$ is sufficiently large, we conclude that $\sum_{i\in J}|A_i|/m^d\leq 500\epsilon$, so $\sum_{i\in I}|A_i|/m^d\geq 1-500\epsilon\geq 1/2$. By Jensen's inequality, we have
\[\epsilon\geq \frac k4\sum_{i\in I}\frac{|A_i|^{1+\tfrac1d}}{m^{d+1}}\geq \frac {k}4|I|\paren{\frac1{|I|}\sum_{i\in I}\frac{|A_i|}{m^d}}^{1+\frac1d}\geq\frac{k}{16|I|^{1/d}}\geq\frac{k}{16K^{1/d}}.\]
Since $k\geq D/(2d)$, this rearranges to the desired result.
\end{proof}

\section{Open problems}
\label{sec:open-problems}

Our regularity lemma produces partitions with $O_{d,k}((D/\epsilon)^d)$ parts. Tracing through the dependencies, the hidden constant is $\exp\paren{(dk)^{O(1)}}$. In contrast we prove a lower bound of $(32d)^{-d}$ on this constant. It seems likely that the dependence on dimension can be improved in some of the polynomial method tools we use from \cite{Wal20} which would start to close this gap.

\begin{problem}
Improve the dependence of the number of parts on $d$ and $k$ in the regularity lemma, \cref{thm:main}.
\end{problem}

To deduce the equitable version of the regularity lemma, we lose a factor of $\epsilon$. In dimension $d=1$, we can produce the equitable partitions by hand with no loss. It would be interesting if one could avoid this loss in all dimensions.

\begin{conjecture}
\cref{thm:main-equitable} holds with partitions into $O_{d,k}((D/\epsilon)^d)$ parts.
\end{conjecture}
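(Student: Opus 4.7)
The plan is to upgrade Theorem \ref{thm:main-partition} to a \emph{balanced} multilevel polynomial partitioning, in which each cell contains not only at most but also at least $\Theta_d(|P_V|/(A^i \prod_{j=1}^{d-i}\deg Q_j^{(V)}))$ points of $P_V$. Given such a balanced version, all cells across the whole partition have size within a constant factor of the average $|P|/N$, where $N = O_{d,k}((D/\epsilon)^d)$. An equitable refinement with no further loss in the number of parts then comes for free, since we may subdivide each cell internally into blocks of size $\lfloor |P|/N'\rfloor$ or $\lceil |P|/N'\rceil$ (for $N' = c_{d,k} N$) without breaking homogeneity---every refined block lies inside one original cell.

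The first step is to establish a balanced analog of Theorem \ref{thm:real-poly-part}: for an irreducible variety $V\subseteq\CC^d$ and $P\subseteq V(\R)$, produce a real polynomial $f$ of degree $\lesssim_d A$ that does not vanish on $V$ and such that each connected component of $\R^d\setminus Z(f)$ contains \emph{between} $c_d|P|/(A^i\prod_{j=1}^{d-i}\delta_j(V))$ and $C_d|P|/(A^i\prod_{j=1}^{d-i}\delta_j(V))$ points, where $i$ is as in Theorem \ref{thm:real-poly-part}. The natural route is the polynomial ham sandwich theorem of Stone--Tukey, iterated in the Guth style: starting from the single point set $P$, repeatedly bisect all current subsets simultaneously using a polynomial whose degree grows geometrically. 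After $\log_2(A^i\prod\delta_j)$ iterations one has the desired number of cells, each containing essentially the same number of points of $P$, and the total degree of the product telescopes to $\lesssim_d A$. The non-trivial part is to carry out this bisection scheme \emph{inside} the variety $V$ with degree controlled by the partial degrees $\delta_j(V)$ rather than by $d$, which will require combining the polynomial ham sandwich argument with the dimension-counting and envelope-excising machinery developed by Walsh \cite{Wal20}.

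With balanced polynomial partitioning in hand, the construction of Theorem \ref{thm:main-partition} runs essentially verbatim, but now \cref{eq:size-of-parts} becomes a two-sided estimate. Applying the resulting balanced multilevel partitioning with $A = C_{d,k} D/\epsilon$ and repeating the homogeneity argument of the proof of Theorem \ref{thm:main} gives a homogeneous partition of error $\epsilon$ into $N = O_{d,k}((D/\epsilon)^d)$ cells, each of size $\Theta_{d,k}(|P|/N)$. To extract a truly equitable partition, one subdivides inside each cell into blocks of size $\lfloor |P|/N'\rfloor$ or $\lceil |P|/N'\rceil$ for $N'$ slightly larger than $N$, coordinating the choices globally so the block counts sum to exactly $N'$. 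Because every block sits in a single original cell, the homogeneity property transfers with no additional error, and we keep $N' = O_{d,k}((D/\epsilon)^d)$.

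The main obstacle is the first step. Walsh's partitioning is engineered around upper bounds: one exhibits a polynomial of controlled degree whose zero set \emph{avoids} clustering too many points in any single cell, but the construction gives no explicit lower bound on cell sizes. Transplanting the polynomial ham sandwich argument into the variety setting with partial-degree control is delicate because the natural bisecting polynomials may vanish identically on $V$ or on envelope components, and because the dimension formula $\binom{d+r}{d}$ governing the classical theorem must be replaced by an analog reflecting the partial degrees of $V$. If a clean balanced partitioning turns out to be out of reach, a fallback is a hybrid approach: apply the upper-bound version of Theorem \ref{thm:main-partition} and then iteratively re-partition any cell whose point count exceeds $|P|/N$ by a constant factor; since cells shrink by a constant factor at each re-partition and the total mass is bounded by $|P|$, this terminates with only a constant factor blow-up in the number of cells, yielding the desired equitable partition.
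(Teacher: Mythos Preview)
The statement you are attempting to prove is listed in the paper as an open \emph{conjecture} (in the section on open problems), not as a theorem; there is no proof in the paper to compare against. So the relevant question is whether your plan actually closes the gap, and it does not.

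The central gap is the inference ``balanced cells within each $V$ $\Rightarrow$ all cells have size within a constant factor of $|P|/N$.'' This is false in general, because the quantities $|P_V|$ are not tied to one another across different $V\in\cV$. Concretely, take $d=2$ and let $P$ consist of $n/2$ points on a line $L$ and $n/2$ points in general position. At the top level any partitioning polynomial of degree $O(A)$ that keeps every cell below $O(n/A^2)$ points must vanish on $L$ (a degree-$A$ curve meets $L$ in only $O(A)$ points, so otherwise some cell would carry $\gtrsim n/A$ points). Hence the $n/2$ points on $L$ descend to level $1$, where a balanced one-dimensional partition produces $\Theta(A)$ cells of size $\Theta(n/A)$, while the $\Theta(A^2)$ top-level cells carry $\Theta(n/A^2)$ points each. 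The level-$1$ cells are a factor $A$ larger than the average, so the partition is not globally balanced and the equitable subdivision step cannot be carried out without crossing cell boundaries.

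Your fallback has the same defect in disguise. To push a level-$i$ cell of size $\lesssim |P|/A^i$ down to size $\lesssim |P|/A^d$ by constant-factor shrinkage requires $\Theta((d-i)\log A)$ rounds of constant-degree re-partitioning; this keeps $\deg f_V$ at $O(A)$, but it multiplies the number of sub-cells by $O(1)^{\Theta(\log A)}=A^{\Theta(1)}$ for each original cell, so the total cell count becomes $A^{d+\Theta(1)}$ rather than $O(A^d)$. The mass argument you invoke gives only a lower bound on the number of final cells, not the upper bound you need. In short, neither route yields an equitable partition into $O_{d,k}((D/\epsilon)^d)$ parts, which is why the paper records this as a conjecture rather than a corollary.
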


We do not know if either our general Tur\'an and Zarankiewicz results are tight except for a very small number of examples.

\begin{question}
What is the correct exponent of $\epsilon$ in the semialgebraic Tur\'an result, \cref{thm:turan}?
\end{question}

It is easy to see that \cref{thm:turan} is tight in dimension $d=1$. However, we do not know if it is tight in any other dimension.

For $d=2$, a tight example for the Szemer\'edi--Trotter theorem gives a semialgebraic graph $G_0$ with $n$ vertices at $Cn^{4/3}$ edges. This graph is $K_{2,2}$-free. Let $G$ be the $r$-fold blowup of $G_0$. This is an $nr$-vertex graph with $Cn^{4/3}r^2$ edges. The graph $G$ is still semialgebraic in $\R^2$ of total degree $O(1)$. Furthermore, for any complete $S\times T$ in $G$, we see that $\min\{|S|,|T|\}\leq r$. Since $G$ has edge density $\epsilon=Cn^{-2/3}$, this shows that the exponent of 2 in the Tur\'an bound $|S|=\Omega_d (\epsilon^2N)$ and $|T|=\Omega(\epsilon N)$ cannot be replaced by anything smaller than $3/2$.

For the Zarankiewicz problem, the Szemer\'edi--Trotter example shows that the power of $N$ is correct in dimension 2. However for dimensions 3 and higher, the best-known lower bounds are the lower bounds for the point-hyperplane incidence problem \cite{ST23} which do not match the upper bound.

\begin{question}
What is the correct exponent of $N$ in the semialgebraic Zarankiewicz result, \cref{thm:zarankiewicz}?
\end{question}

One can also ask for the correct exponents of $u$ and $D$. When $k=2$, the bound in \cref{thm:zarankiewicz} is $O_d(u^{2/(d+1)}D^{2d/(d+1)}N^{2d/(d+1)})$. We can say at least that neither of the exponents on $u$ or $N$ can be decreased without increasing the other, since for $u=N$ and $D=1$, \cref{thm:zarankiewicz} gives the bound $|E|=O_{d}(N^2)$, which is tight for these parameters. (The bound is tight in the sense that it cannot be improved by $N^c$ for any $c>0$ which is independent of $u$.) 
Moreover, as every $N$-vertex graph is semialgebraic in $\R^d$ of total degree $O_d(N^{1/d})$ we can also say that, for $k=2$, neither of the exponents on $D$ or $N$ can be decreased without increasing the other, since for $k=2$ and $u\sim 1$ and $D\sim N^{1/d}$, \cref{thm:zarankiewicz} gives the bound $|E|=O_d(N^2)$, which is tight. We conjecture that for $k>2$, the exponents on $u$ and $D$ can be decreased so that the bound is tight in this way.

\begin{conjecture}
Let $H$ be a $K^{(k)}_{u,\ldots,u}$-free semialgebraic hypergraph in $\R^d$ with total degree $D$ and parts of size $N$. Then the number of edges of $H$ is at most
\[O_{d,k}\paren{u^{\frac{k}{(k-1)d+1}}D^{\frac{kd}{(k-1)d+1}}N^{k-\frac{k}{(k-1)d+1}}}.\]
\end{conjecture}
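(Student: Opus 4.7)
My plan is to follow the overall architecture of the proof of \cref{thm:zarankiewicz-full} and strengthen the weak Zarankiewicz step so that, at the final optimization, the $u$- and $D$-exponents come down to the conjectural values $k/((k-1)d+1)$ and $kd/((k-1)d+1)$ respectively. Induction on $k$ would be natural, with $k=2$ already established by \cref{thm:zarankiewicz-2-unif}. For the inductive step, I would apply multilevel polynomial partitioning (\cref{thm:main-partition}) to $P_1,\ldots,P_{k-1}$ with a parameter $A \geq 1$, and for each cell tuple $C=(C_1,\ldots,C_{k-1})$ partition $P_k = P_{C,\mathsf{none}} \sqcup P_{C,\mathsf{some}} \sqcup P_{C,\mathsf{all}}$ via the semialgebraic neighborhoods as in the existing proof, splitting
\[
|E| = \sum_C |E[C \times P_{C,\mathsf{some}}]| + \sum_C |E[C \times P_{C,\mathsf{all}}]|.
\]
For the some contribution I would apply a weak Zarankiewicz bound to each sub-hypergraph and sum via H\"older's inequality, using the cell counts supplied by \cref{thm:number-of-cells,thm:number-of-bad-tuples}. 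For the all contribution, $K_{u,\ldots,u}^{(k)}$-freeness forces either $|P_{C,\mathsf{all}}|<u$ or some $|(P_\ell)_{C_\ell}|<u$; in the latter case I would collect the small cells into a set $P'_\ell$ with $|P'_\ell| \lesssim_d uA^d$ and apply the weak bound to the sub-hypergraph $P_1 \times \cdots \times P'_\ell \times \cdots \times P_k$. Finally I would optimize $A$ to balance the two contributions, which by the pattern of the proof of \cref{thm:zarankiewicz-full} would yield a bound of the conjectural shape $(uD^d N^{(k-1)d})^{k/((k-1)d+1)}$.

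The new ingredient required is a weak Zarankiewicz bound for $k$-uniform hypergraphs whose $u$- and $D$-exponents are small enough that the final optimization produces the conjectural bound. The weak bound used in \cref{eq:weak-zarankiewicz-full} is obtained by passing to the common neighborhood of $u$ vertices from $P_k$, which is a semialgebraic $(k-1)$-uniform hypergraph of total degree $uD$ rather than $D$; applying the inductive $(k-1)$-uniform Zarankiewicz then produces a factor $(uD)^{(k-1)d/((k-2)d+1)}$, and it is precisely this inflation by a factor of $u$ in the total degree that is responsible for the excess $u$- and $D$-factors in the $s=k$ term of \cref{thm:zarankiewicz-full}. Any proof of the conjecture must therefore avoid this common-neighborhood blow-up.

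The main obstacle is thus the weak bound itself. One natural attempt is to apply multilevel polynomial partitioning a second time, \emph{inside} the common neighborhood, in order to amortize the cost of intersecting $u$ semialgebraic conditions over many smaller sub-cells; however, it is unclear whether this saves the entire factor of $u^{\Theta(1/((k-1)d+1))}$ needed. A different attempt is to partition all $k$ vertex sets simultaneously in $\R^{kd}$ so as to bypass common neighborhoods entirely, but by the Milnor--Thom cell count in $\R^{kd}$ a direct symmetric analysis yields only an $N$-exponent of $k - k/(kd+1)$, which is strictly weaker than the conjectural $k - k/((k-1)d+1)$. Reconciling the efficient $N$-exponent obtained from the asymmetric $(k-1)$-fold partitioning with the efficient $u$- and $D$-exponents obtained from the symmetric $k$-fold partitioning---or, equivalently, discovering a reduction from $k$-uniform to $(k-1)$-uniform semialgebraic hypergraphs that does not multiply the total degree by $u$---is the crux of the problem and is what makes this a conjecture rather than a theorem.
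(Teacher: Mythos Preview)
The statement is a \emph{conjecture}, not a theorem, and the paper does not prove it. Your proposal correctly recognizes this and identifies the obstacle: in the weak Zarankiewicz step one passes to the common neighborhood of $u$ vertices in $P_k$, which is a semialgebraic $(k-1)$-uniform hypergraph of total degree $uD$ rather than $D$, and this inflation by $u$ is exactly what the paper's proof of \cref{thm:zarankiewicz-full} incurs and what prevents reaching the conjectural exponents. Your discussion of the obstruction matches the paper's own commentary following the conjecture.

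The paper adds one observation you do not mention: for hypergraphs of a particularly simple form---for instance, when the edge relation is given by a single polynomial equality $g(x_1,\ldots,x_k)=0$---the common neighborhood $H'$ has total degree $O_{D,d,k}(1)$ with no $u$-dependence, and so the conjectured $u$-exponent (though not the $D$-exponent) can be obtained by rerunning the proof of \cref{thm:zarankiewicz-full}. This is the same mechanism behind the remark after \cref{thm:equilateral-triangle}. So there is a partial positive result in the direction of the conjecture that your write-up could incorporate, but no full proof exists in the paper, and your identification of the crux is accurate.
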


For semialgebraic hypergraphs with a particularly simple structure, one can prove the bound $O_{D,d,k}(u^{k/((k-1)d+1)}N^{k-k/((k-1)d+1)})$ by modifying the proof of \cref{thm:zarankiewicz-full}. For example, suppose that there exists a single polynomial $g$ such that $(x_1,\ldots,x_k)\in P_1\times\cdots\times P_k$ is an edge of $H$ if and only if $g(x_1,\ldots,x_k)=0$. In the proof of \cref{thm:zarankiewicz-full}, we considered the $(k-1)$-uniform hypergraph $H'=(P_1\sqcup\cdots\sqcup P_{k-1},T')$ which is the common neighborhood in $H$ of vertices $q_1,\ldots,q_u\in P_k$. In that proof, we used the fact that $H'$ is a semialgebraic hypergraph of total degree $uD$. If $H$ has the above simple form, then instead $H'$ has total degree $O_{D,d,k}(1)$, with no $u$-dependence. (This is a generalization of the observation in the remark after \cref{thm:equilateral-triangle}.) This shows that we can prove the conjectured $u$-dependence at the cost of significantly worse $D$-dependence for these particularly simple semialgebraic hypergraphs.

\appendix

\section{Deduction of polynomial method results}

\subsection{Proof of \texorpdfstring{\cref{thm:barone-basu}}{Theorem 2.2}}

For a variety $V\subseteq \CC^d$ and a point $x\in V(\R)$, we write $\dim_x^\R(V)$ for the local real dimension of $V(\R)$ at $x$. 
Clearly $\dim_x^\R(V)\leq\dim_x^\CC(V)$ for every $x\in V(\R)$.
We will need the following definitions that are made in  \cite[Section 7.1]{Wal20}.

\begin{definition}
    Let $Q = (Q_1,Q_2,\ldots, Q_m)$ be a sequence of polynomials in some ambient space $\CC^d$.
    For any $0\leq j\leq m$, define $Z_j(Q)$ to be $Z(Q_1,\ldots, Q_j)$, and for every $x\in \R^d$  let $\dim^{\R}_{Q,(j)}(x)$ be the $j$-tuple recording the local real dimensions $(\dim^{\R}_x Z_1(Q),\ldots, \dim^{\R}_x Z_j(Q))$.
    Lastly, for any $j$-tuple of non-negative integers $\tau$, define $Z_{\tau}(Q)$ to be the set
    \[\overline{\{x\in \R^d: \dim^{\R}_{Q,(j)}(x) = \tau\}}\]
    where the closure is in the Euclidean topology on $\R^d$.
\end{definition}

\begin{proof}[Proof of \cref{thm:barone-basu}]
We first reorder $Q_1,\ldots, Q_{d-r}$ so that their degrees are non-decreasing.
Now for every $x\in Z(Q)(\R)$ with $\dim_x^{\CC} Z(Q)=r$, by Krull's theorem, we must have that $\dim_x^{\CC} Z_j(Q) = d-j$ for every $j=1,\ldots, d-r$.
As a consequence, we have $\dim^{\R}_{Q,(m)}(x)\leq (d-1,d-2,\ldots, r)$.
Therefore we may now apply \cite[Proposition 7.1]{Wal20} with $j=d-r$ and sum over all $(d-r)$-tuples $\tau$ with $\tau_j\leq d-j$ for all $j\in [d-r]$ to get the desired bound.
\end{proof}

\subsection{Proof of \texorpdfstring{\cref{thm:real-poly-part}}{Theorem 3.3}}
We deduce \cref{thm:real-poly-part} from \cite[Theorem 3.2]{Wal20}. The statement of that theorem involves the quantities $\Delta_i(V)$ and $i_V(A)$. We refer the reader to \cite[Section 3]{Wal20} for the definition of $i_V(A)$ since our proof does not use any property of $i_V(A)$ other than that it is an integer between 0 and $d-\dim V$.

Let $V\subseteq \CC^d$ be an irreducible variety. For every index $0\leq i\leq d-\dim V$, write
\[\Delta_i(V) = \max\left(\frac{\deg V}{\delta_{i+1}(V)\cdots \delta_{d-\dim V}(V)},1\right).\]
Note that in particular, we have $\Delta_{d-\dim V}(V)=\deg V$, and so the middle term in \cref{thm:inverse-bezout} can be rewritten as $\Delta_{d-\dim V}$.
We can extend \cref{thm:inverse-bezout} as follows.
\begin{proposition}\label{prop:general-inverse-bezout}
    For any irreducible variety $V\subseteq \CC^d$ and any index $0\leq i\leq d-\dim V$, 
    \[\prod_{j=1}^{i}\delta_j(V)\lesssim_d \Delta_i(V)\leq \prod_{j=1}^{i}\delta_j(V).\]
\end{proposition}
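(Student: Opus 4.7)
The plan is to deduce Proposition~A.2 directly from \cref{thm:inverse-bezout} by purely algebraic manipulation; no new geometric input is needed. The key observation is that, by definition, $\Delta_i(V)=\max\bigl(\deg V/\bigl(\delta_{i+1}(V)\cdots\delta_{d-\dim V}(V)\bigr),\,1\bigr)$, so both inequalities reduce to comparing $\prod_{j=1}^{i}\delta_j(V)$ with $\deg V/\prod_{j=i+1}^{d-\dim V}\delta_j(V)$, and this is exactly the quantity controlled by \cref{thm:inverse-bezout} after dividing through by $\prod_{j=i+1}^{d-\dim V}\delta_j(V)$. This division is legitimate because each $\delta_j(V)$ is a positive integer for $j\leq d-\dim V$.

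For the upper bound, I would start from the right-hand inequality $\deg V\leq\prod_{j=1}^{d-\dim V}\delta_j(V)$ in \cref{thm:inverse-bezout} and divide by $\prod_{j=i+1}^{d-\dim V}\delta_j(V)$ to obtain
\[\frac{\deg V}{\prod_{j=i+1}^{d-\dim V}\delta_j(V)}\leq\prod_{j=1}^{i}\delta_j(V).\]
Since each $\delta_j(V)\geq 1$, the product $\prod_{j=1}^{i}\delta_j(V)$ is itself at least $1$, so the maximum in the definition of $\Delta_i(V)$ is also bounded above by $\prod_{j=1}^{i}\delta_j(V)$, as required.

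For the lower bound, I would start from the left-hand inequality $\prod_{j=1}^{d-\dim V}\delta_j(V)\lesssim_d\deg V$ in \cref{thm:inverse-bezout} and again divide by $\prod_{j=i+1}^{d-\dim V}\delta_j(V)$ to get
\[\prod_{j=1}^{i}\delta_j(V)\lesssim_d\frac{\deg V}{\prod_{j=i+1}^{d-\dim V}\delta_j(V)}.\]
From here I would split into two cases according to which term realizes the maximum defining $\Delta_i(V)$. If the right-hand side is at least $1$, then it equals $\Delta_i(V)$ and the desired inequality follows immediately. Otherwise the right-hand side is strictly less than $1$, which forces $\prod_{j=1}^{i}\delta_j(V)\lesssim_d 1=\Delta_i(V)$, absorbing the constant factor into the dimension-dependent one.

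There is essentially no main obstacle here: the statement is a direct rephrasing of \cref{thm:inverse-bezout} combined with the elementary fact that partial degrees are positive integers. The only points worth flagging are the boundary cases $i=0$ and $i=d-\dim V$, where one of the products is empty (taken to be $1$); in the latter case $\Delta_i(V)=\deg V$ and the proposition recovers \cref{thm:inverse-bezout} verbatim.
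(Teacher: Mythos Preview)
Your proposal is correct and follows essentially the same approach as the paper: both deduce the two inequalities by dividing the bounds in \cref{thm:inverse-bezout} through by $\prod_{j=i+1}^{d-\dim V}\delta_j(V)$ and handling the $\max(\cdot,1)$ in the definition of $\Delta_i(V)$ by a trivial case split. The paper's lower bound is marginally slicker in that it uses $\Delta_i(V)\geq \deg V/\prod_{j>i}\delta_j(V)$ directly from the definition of the maximum, avoiding your case analysis, but the arguments are otherwise identical.
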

\begin{proof}
    By \cref{thm:inverse-bezout}, 
    \[\Delta_i(V)\geq \frac{\deg V}{\prod_{j=i+1}^{d-\dim V}\delta_j(V)}\gtrsim_d\frac{\prod_{j=1}^{d-\dim V}\delta_j(V)}{\prod_{j=i+1}^{d-\dim V}\delta_j(V)}=\prod_{j=1}^{i}\delta_j(V).\]
    For the other direction, it clearly holds if $\Delta_i(V)=1$.
    Otherwise, by \cref{thm:inverse-bezout},
    \[\Delta_i(V)=\frac{\deg V}{\prod_{j=i+1}^{d-\dim V}\delta_j(V)}\leq \frac{\prod_{j=1}^{d-\dim V}\delta_j(V)}{\prod_{j=i+1}^{d-\dim V}\delta_j(V)}=\prod_{j=1}^{i}\delta_j(V).\qedhere\]
\end{proof}

\begin{proof}[Proof of \cref{thm:real-poly-part}]
    By directly applying \cite[Theorem 3.2]{Wal20}, it remains to show that
    \[\frac{\abs{P}}{A^{d-i_V(A)}\Delta_{i_V(A)}(V)}\lesssim_d \frac{\abs{P}}{A^{i}\prod_{j=1}^{d-i}\delta_j(V)}.\]
    Since $\Delta_{i_V(A)}(V)\gtrsim_d \prod_{j=1}^{i_V(A)}\delta_j(V)$ by \cref{prop:general-inverse-bezout}, it suffices to show that
    \[A^{d-i_V(A)}\prod_{j=1}^{i_V(A)}\delta_j(V)\geq A^{i}\prod_{j=1}^{d-i}\delta_j(V).\]
    However, by the definition of $i$ and the fact that $\delta_1(V)\leq \cdots \leq \delta_{d-\dim V}(V)$, we know that 
    \[A^{i}\prod_{j=1}^{d-i}\delta_j(V)=\min_{\dim V\leq\ell\leq d}\set{A^{\ell}\prod_{j=1}^{d-\ell}\delta_j(V)}.\]
    Therefore the inequality follows.
\end{proof}

\end{document}